\newtheorem{thm}{Theorem}[section]
\theoremstyle{theorem}
\theoremstyle{definition}
\newtheorem{definition}[thm]{Definition}
\newtheorem{rmk}[thm]{Remark}
\newtheorem*{thm*}{Theorem}
\newtheorem{obs}[thm]{Observation}
\newtheorem*{definition*}{Definition}
\newtheorem*{cor*}{Corollary}
\newtheorem{prop}[thm]{Proposition}
\newtheorem{cor}[thm]{Corollary}
\newtheorem*{prop*}{Proposition}
\newtheorem{lem}[thm]{Lemma}
\newtheorem*{claim*}{Claim}
\newtheorem*{thmA}{Theorem A}
\newtheorem*{corB}{Corollary B}
\newtheorem*{corC}{Corollary C}
\newtheorem*{thmD}{Theorem D}
\newtheorem*{thmE}{Theorem E}
\def\lquotient#1#2{%
\makeatletter
\lower.6ex\hbox{$#1$}\backslash\raise.3ex\hbox{$#2$}%
\makeatother
}
\def\rquotient#1#2{%
\makeatletter
\raise.6ex\hbox{$#1$}/\lower.2ex\hbox{$#2$}%
\makeatother
}
\newcommand{\bbQ}{{\mathbb Q}}
\newcommand{\bbZ}{{\mathbb Z}}
\newcommand{\cG}{{\mathcal G}}
\newcommand{\ra}{\rightarrow}
\def\rquotient#1#2{%
\makeatletter
\raise.3ex\hbox{$#1$}/\lower.3ex\hbox{$#2$}%
\makeatother
}	
\newcommand{\subjclass}[2][2010]{%
  \let\@oldtitle\@title%
  \gdef\@title{\@oldtitle\footnotetext{#1 \emph{Mathematics subject classification.} #2}}%
}
\newcommand{\keywords}[1]{%
  \let\@@oldtitle\@title%
  \gdef\@title{\@@oldtitle\footnotetext{\emph{Key words and phrases.} #1.}}%
}
\newcommand{\Address}{{
  \bigskip
  \small

   \textsc{Department of Mathematics, Faculty of Sciences, University Aix-Marseille, 3 place Victor Hugo, 13331 Marseille cedex 3, France.}\par\nopagebreak
  \textit{E-mail address}: \texttt{anthony.genevois@univ-amu.fr}

  \medskip
  
   \textsc{Department of Mathematics and the Maxwell Institute for Mathematical Sciences, Heriot-Watt University, Riccarton, EH14 4AS Edinburgh, United Kingdom.}\par\nopagebreak
  \textit{E-mail address}: \texttt{alexandre.martin@hw.ac.uk}
 
}}
\newdimen\bibindent
\title{\textbf{Automorphism groups of cyclic products of groups}}  
\author{Anthony Genevois and Alexandre Martin}
\date{}
\subjclass{ Primary 20F65. Secondary 20F28.}
\keywords{graph products of groups, CAT(0) cube complexes, acylindrical hyperbolicity, automorphism group}
\begin{document}
\maketitle

 \begin{abstract} 
This article initiates a geometric study of the automorphism groups of general graph products of groups, and investigates the algebraic and geometric structure of automorphism groups of \textit{cyclic} product of groups. For a cyclic product of at least five groups, we show that the action of the cyclic product on its Davis complex extends to an action of the whole automorphism group. This action allows us to completely compute the automorphism group and to derive several of its properties: Tits Alternative, acylindrical hyperbolicity, lack of property (T).
 \end{abstract}

\tableofcontents

\section*{Introduction}

Given a group, a natural question is to determine its (outer) automorphism group. Only few automorphism groups have been studied from a geometric point of view. Indeed, while many groups come with interesting actions associated to them, there is no general recipe for constructing a `nice' action of $\mathrm{Aut}(G)$ out of an action of $G$. Famous examples where automorphism groups have been studied from a geometric perspective  include the (outer) automorphism groups of free groups, which act on their outer-spaces and various hyperbolic graphs (see \cite{VogtmannSurveyOut, VogtmannSurveyOutRecent}); and the (outer) automorphism groups of surface groups, which essentially coincide with the corresponding mapping class groups and thus act on their Teichm\"uller spaces and their curve complexes (see \cite{IvanovSurveyMCG}).

 An interesting case where one can study the automorphism group from a geometric perspective is when the original action satisfies some form of `rigidity', i.e. when  the action of a group $G$ on a space $X$ can be extended to an action of $\mathrm{Aut}(G)$ on $X$ (where one identifies a centreless group $G$ with the subgroup of $\mathrm{Aut}(G)$ consisting of its inner automorphisms). The prime example of this phenomenon is the work of Ivanov on the action of mapping class groups of hyperbolic surfaces on their curve complexes: Ivanov showed that an automorphism of the mapping class group induces an automorphism of the underlying curve complex. Another example is given by the Higman group: in \cite{MartinHigman}, the second author computed the automorphism group of the Higman group $H_4$  by first extending the action of $H_4$ on a CAT(0) square complex naturally associated to its standard presentation to an action of $\mathrm{Aut}(H_4)$. Such a rigidity phenomenon thus provides a fruitful road towards understanding automorphism groups, and the goal of this article is to initiate such a geometric approach for the study of the automorphism groups of graph products of groups.

Graph products of groups, which have been introduced by  Green in \cite{GraphProduct}, are a class of group products that, loosely speaking, interpolates between free and direct products (see Section \ref{section:graphproducts} for a precise definition). They include two intensively studied families of groups: right-angled Artin groups  and right-angled Coxeter groups. Many articles have been dedicated to the study of the automorphism groups of these particular examples of graph products. Beyond that, most of the literature on the automorphisms of other types of graph products has focused on free products \cite{OutSpaceFreeProduct, HorbezHypGraphsForFreeProducts, HorbezTitsAlt} and graph products of \textit{abelian} groups \cite{AutGPabelianSet, AutGPabelian, AutGPSIL, RuaneWitzel}. By contrast,  automorphism groups of graph products of more general groups are essentially uncharted territory: for instance, no set of generators is known in general.

Our goal is to study automorphism groups of graph products from the point of view of interesting actions on non-positively curved spaces. As mentioned above, automorphism groups of free groups, or more generally various right-angled Artin groups \cite{AutTwoDimRaag, OutRaagUntwisted}, already possess  interesting actions on variations of outer spaces, and such actions have been studied with great success.  However, we emphasise that the philosophy of this article is of a quite different nature:  articles on $\mathrm{Out(RAAGs)}$ generally use the structure of the underlying right-angled Artin group to construct a proper action of the automorphism group. By contrast, we do not assume here any prior knowledge of the groups constituting the graph product, and we want to find  an action for its automorphism group that will actually reveal its algebraic structure (generators, decomposition, etc.), together with other interesting properties. \\

In this article, we illustrate our geometric approach in the specific  case of \emph{cyclic products} of groups, ie., graph products of non-trivial groups over a cycle of length at least five.  To the authors' knowledge, the results represent the first results on the algebraic and geometric structure of automorphism groups of graph product of general (and in particular non-abelian) groups. 

  First of all, we completely describe  the automorphisms of such graph products:

\begin{thmA}
For every $n \geq 5$ and every collection of non trivial groups $\mathcal{G}= \{G_i, i\in \bbZ_n\}$, the automorphism group of the cyclic product of groups $C_n\cG$ decomposes as follows:
$$\mathrm{Aut}(C_n\mathcal{G})
\simeq C_n\mathcal{G} \rtimes \left( \left( \prod\limits_{i \in \bbZ_n} \mathrm{Aut}(G_i) \right) \rtimes \mathrm{Sym} \right).$$
\end{thmA}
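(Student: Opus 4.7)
The plan is to build up the three factors of the semidirect product inside $\mathrm{Aut}(C_n\cG)$ by direct construction, and then to use a rigid action on the Davis complex $\Delta(C_n,\cG)$ to prove that together they exhaust the whole automorphism group. First I would exhibit the three natural families of automorphisms: inner automorphisms, giving a copy of $C_n\cG$ (the cyclic product is centreless, since no vertex of $C_n$ dominates the graph when $n\geq 4$); \emph{factor automorphisms} obtained by applying some $\alpha_i\in\mathrm{Aut}(G_i)$ syllable-wise in a normal form, yielding $\prod_{i\in\bbZ_n}\mathrm{Aut}(G_i)$; and \emph{graph symmetries} induced by the dihedral symmetries of $C_n$, isomorphically permuting the vertex groups and giving $\mathrm{Sym}$. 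Verifying the conjugation relations among these three subgroups is routine and yields the iterated semidirect product structure as a subgroup.

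The substantive part is to show that every $\phi\in\mathrm{Aut}(C_n\cG)$ is a composition of these three types. Here I would invoke the geometric extension announced in the introduction: $\phi$ induces a simplicial automorphism of $\Delta(C_n,\cG)$ equivariantly compatible with the $C_n\cG$-action. The underlying ingredient is an intrinsic algebraic characterisation of the vertex subgroups $G_i$ up to conjugation—for example via their centralisers, their position in a JSJ-type decomposition, or as stabilisers of distinguished vertices of $\Delta(C_n,\cG)$—so that $\phi$ must permute the conjugacy classes $\{[G_i]\}_{i\in\bbZ_n}$. The combinatorial constraint that commuting vertex subgroups correspond to adjacent vertices of $C_n$ then forces the induced permutation to come from a graph automorphism of $C_n$, yielding a homomorphism $\Phi\colon\mathrm{Aut}(C_n\cG)\to\mathrm{Sym}$. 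Surjectivity of $\Phi$ is immediate from the graph symmetries already constructed.

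It remains to identify $\ker\Phi$ with $C_n\cG\rtimes\prod_i\mathrm{Aut}(G_i)$. Given $\phi\in\ker\Phi$, one has $\phi(G_i)=g_iG_ig_i^{-1}$ for some $g_i$. After composing with the inner automorphism $\mathrm{Ad}(g_0^{-1})$ I may assume $\phi(G_0)=G_0$, and then inductively, using that consecutive $G_i,G_{i+1}$ commute and that the normaliser of $G_i$ in $C_n\cG$ is controlled by the star of $i$ in $C_n$, I can further adjust by an inner automorphism supported on the already-fixed factors to arrange $\phi(G_i)=G_i$ setwise for every $i$. Such a $\phi$ then restricts to an element of each $\mathrm{Aut}(G_i)$, and since these data extend uniquely to an automorphism of the graph product, we recover $\phi\in\prod_i\mathrm{Aut}(G_i)$, completing the identification.

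The main obstacle is the geometric rigidity step: proving that an abstract automorphism must preserve the conjugacy classes of vertex subgroups, equivalently recovering the combinatorial data $(C_n,\cG)$ from $C_n\cG$ alone. The hypothesis $n\geq 5$ is essential here, since $C_4\cG\cong(G_0\ast G_2)\times(G_1\ast G_3)$ admits extra automorphisms swapping the two free factors, and $C_3\cG$ has still different pathological behaviour, so any proof must leverage the triangle-free, square-free nature of $C_n$ for $n\geq 5$ in the characterisation of vertex subgroups.
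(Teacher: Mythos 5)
Your overall skeleton matches the paper's: make the parabolic structure of $C_n\cG$ automorphism-invariant so that the action on the Davis complex extends to $\mathrm{Aut}(C_n\cG)$, then identify the stabiliser of a fundamental domain with the local automorphisms and quotient out $\mathrm{Inn}$. But the step you defer as "the underlying ingredient" --- an intrinsic algebraic characterisation of the vertex subgroups up to conjugacy --- is not an ingredient one can invoke; it is the entire technical content of the theorem, and the three routes you gesture at are either circular or unworkable as stated. Characterising $G_i$ as the stabiliser of a distinguished vertex of the Davis complex is circular, since that complex is built from cosets of the $G_i$; one needs an \emph{a priori} algebraic description. Centralisers of the $G_i$ involve the unknown centres $Z(G_i)$, and it is not clear how to recognise a conjugate of $G_i$ among arbitrary subgroups by centraliser conditions alone; no JSJ machinery is set up. What the paper actually does is: (i) characterise the conjugates of $\langle G_{i-1},G_i,G_{i+1}\rangle$ as the maximal subgroups splitting non-trivially as direct products, via the centraliser dichotomy of Lemma \ref{lem:centralizer}; and (ii) prove --- this is the hard part --- that the non-trivial pairwise intersections of these subgroups are exactly the conjugates of the $G_i$ and of the $\langle G_i,G_{i+1}\rangle$ (Proposition \ref{prop:interstabwall}). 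Step (ii) is carried out geometrically, by realising the maximal join subgroups as stabilisers of tree-walls and controlling their intersections with disc-diagram and combinatorial Gau\ss--Bonnet arguments in the $C(n)$--$T(4)$ complex; this is where $n\geq 5$ enters quantitatively, not merely through the absence of the $C_4$ join splitting. Without a substitute for this, your proof does not get off the ground.

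A second, smaller gap lies in your endgame. Having only the conjugacy classes $[G_i]$ in hand, you normalise $\phi$ factor by factor with inner automorphisms; the inductive claim that commutation of $G_0$ with $g_1G_1g_1^{-1}$ lets you correct by an inner automorphism "supported on the already-fixed factors" needs an actual argument constraining where $g_1$ can live (something like Lemma \ref{normalizer}), and the final correction must close up consistently around the cycle. The paper sidesteps this bookkeeping by reconstructing the \emph{whole} polygonal complex algebraically (vertices are the $\mathcal{C}$-medium subgroups, edges join pairs generating a $\mathcal{C}$-maximal one, induced $n$-cycles bound polygons, Proposition \ref{prop:algch}); then $\mathrm{Aut}(C_n\cG)$ visibly acts, is transitive on polygons because $\mathrm{Inn}$ already is, and the polygon stabiliser is computed in one stroke to be $\mathrm{Loc}\simeq\bigl(\prod_i\mathrm{Aut}(G_i)\bigr)\rtimes\mathrm{Sym}$ with $\mathrm{Inn}\cap\mathrm{Loc}=\{\mathrm{Id}\}$. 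Your subgroup-by-subgroup normalisation could likely be completed, but it is not the routine verification you present it as.
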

In the previous statement, $\mathrm{Sym}$ is an explicit subgroup of the automorphism group of the cycle $C_n$.
We refer to Section \ref{section:aut} for the precise statement. In particular, we immediately get a description of the outer automorphism group:

\begin{corB}
For every $n \geq 5$ and every collection of non trivial groups $\mathcal{G}= \{G_i, i\in \bbZ_n\}$,
$$\mathrm{Out}(C_n\mathcal{G}) \simeq \left( \prod\limits_{i \in \bbZ_n} \mathrm{Aut}(G_i) \right) \rtimes \mathrm{Sym} .$$
\end{corB}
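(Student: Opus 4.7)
The plan is to obtain Corollary B as an essentially immediate consequence of Theorem A, by identifying the inner automorphism group inside the semidirect-product decomposition provided there and quotienting by it.

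First, I would verify that $C_n\cG$ is centreless whenever $n \geq 4$. This is a standard fact for graph products of non-trivial groups: the centre is generated by the vertex groups attached to \emph{universal} vertices of the defining graph (vertices adjacent to all others). Since $C_n$ with $n \geq 4$ has no universal vertex, one obtains $Z(C_n\cG) = \{1\}$, and hence the canonical map $C_n\cG \to \mathrm{Inn}(C_n\cG)$ is an isomorphism. In particular, under the isomorphism of Theorem A, the left factor $C_n\cG$ in the decomposition $C_n\cG \rtimes \bigl((\prod_i \mathrm{Aut}(G_i)) \rtimes \mathrm{Sym}\bigr)$ is precisely the subgroup of inner automorphisms; this should be transparent from the proof of Theorem A, since the embedding $C_n\cG \hookrightarrow \mathrm{Aut}(C_n\cG)$ used there is the natural one sending $g$ to conjugation by $g$.

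Second, I would take the quotient of both sides of Theorem A by this normal subgroup $C_n\cG = \mathrm{Inn}(C_n\cG)$. Since the decomposition is a semidirect product with $C_n\cG$ as the normal factor, the quotient is canonically isomorphic to the complementary subgroup
$$\left(\prod_{i \in \bbZ_n} \mathrm{Aut}(G_i)\right) \rtimes \mathrm{Sym},$$
which is the desired description of $\mathrm{Out}(C_n\cG)$.

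The main obstacle is really nil once Theorem A is in hand: the only non-formal ingredient is the centrelessness of $C_n\cG$, which follows from the standard characterisation of the centre of a graph product. The proof therefore reduces to a bookkeeping verification that the $C_n\cG$ factor produced by Theorem A is exactly the image of the inner automorphism homomorphism.
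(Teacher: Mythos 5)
Your proposal is correct and follows essentially the same route as the paper, which presents Corollary B as an immediate consequence of Theorem A: the normal factor $C_n\mathcal{G}$ in the decomposition $\mathrm{Aut}(C_n\mathcal{G}) = \mathrm{Inn}\cdot\mathrm{Loc}$ is identified with $\mathrm{Inn}(C_n\mathcal{G})$ via centrelessness (the paper's Corollary \ref{center}, since a cycle of length at least $5$ is not a join, rather than your universal-vertex criterion, but these are equivalent standard facts), and quotienting by it leaves the complement $\bigl(\prod_{i}\mathrm{Aut}(G_i)\bigr)\rtimes\mathrm{Sym}$.
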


In a nutshell, the previous results state that the automorphisms of these graph products are the `obvious' ones.  In the case of right-angled Coxeter groups, such a phenomenon is linked to the so-called \textit{strong rigidity} of these Coxeter groups, and has strong connections with the famous isomorphism problem for Coxeter groups, see for instance \cite{RACGrigidity}. Note that we can take the local groups in our cyclic  product to be other right-angled Coxeter groups. The resulting cyclic product is again a right-angled Coxeter group, and the previous result can thus be interpreted as a form of strong rigidity of these right-angled Coxeter groups \textit{relative to certain of their parabolic subgroups}.

The explicit computation of the automorphism groups of cyclic products can be used to study the subgroups of such automorphism groups. In particular, since satisfying the Tits alternative is a property stable under graph products \cite{AM} and under extensions, we deduce from Theorem A a combination theorem for the Tits Alternative for such automorphism groups: 

\begin{corC}
Let $C_n\cG$ be a cyclic product of $n$ groups with $n \geq 5$. Then the automorphism group $\mathrm{Aut}(C_n\cG)$ satisfies the Tits Alternative if and only if for every $i \in \bbZ_n$ both $G_i$ and $\mathrm{Aut}(G_i)$ satisfy the Tits alternative.
\end{corC}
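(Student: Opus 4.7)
The plan is to combine the decomposition of Theorem A with two standard stability properties of the Tits alternative: (a) it passes to subgroups, and (b) it is preserved under extensions $1 \to N \to G \to Q \to 1$ (if $N$ and $Q$ both satisfy it, so does $G$). The hypothesis that the Tits alternative is stable under graph products, due to \cite{AM}, is then the only non-elementary input.

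For the forward direction, I would assume $\mathrm{Aut}(C_n\cG)$ satisfies the Tits alternative and observe that, by Theorem A, each $G_i$ embeds in $C_n\cG$ (as a vertex factor of the graph product) and hence in $\mathrm{Aut}(C_n\cG)$, while each $\mathrm{Aut}(G_i)$ embeds in $\mathrm{Aut}(C_n\cG)$ as a direct factor of the complement $\prod_i \mathrm{Aut}(G_i) \rtimes \mathrm{Sym}$. Since the Tits alternative passes to subgroups, both $G_i$ and $\mathrm{Aut}(G_i)$ inherit it for every $i \in \bbZ_n$.

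For the reverse direction, assume each $G_i$ and each $\mathrm{Aut}(G_i)$ satisfies the Tits alternative. First, $C_n\cG$ is a graph product of the $G_i$'s, so by the main result of \cite{AM} it satisfies the Tits alternative. Next, a finite direct product of groups satisfying the Tits alternative still satisfies it (by iterated extensions), so $\prod_{i \in \bbZ_n} \mathrm{Aut}(G_i)$ does. Since $\mathrm{Sym}$ is finite, the semidirect product $\bigl(\prod_{i \in \bbZ_n} \mathrm{Aut}(G_i)\bigr) \rtimes \mathrm{Sym}$ is an extension of a finite group by a group satisfying the Tits alternative, and hence itself satisfies it. Finally, Theorem A presents $\mathrm{Aut}(C_n\cG)$ as an extension
\[
1 \to C_n\cG \to \mathrm{Aut}(C_n\cG) \to \Bigl(\prod_{i \in \bbZ_n} \mathrm{Aut}(G_i)\Bigr) \rtimes \mathrm{Sym} \to 1,
\]
with both kernel and quotient satisfying the Tits alternative; one more application of stability under extensions finishes the proof.

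Since the argument reduces to invoking Theorem A and quoting stability results, there is no real obstacle here: the only substantive ingredient is the graph-product stability theorem of \cite{AM}, which allows us to treat $C_n\cG$ as the kernel in the extension above.
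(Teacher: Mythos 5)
Your argument is correct and is essentially the same as the paper's: the paper deduces Corollary C directly from Theorem A by invoking stability of the Tits alternative under graph products (citing \cite{AM}) and under extensions, exactly as you do, with the forward direction following from passage to subgroups. The only point worth flagging is that you (like the paper) take stability under extensions as a black box; this is standard for the usual formulation of the Tits alternative but does require that the class of ``tame'' subgroups (virtually solvable groups) be closed under extensions and subgroups.
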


Another interesting problem is to understand on which space a given group can act non-trivially, a question that relates to the geometric and analytic structure of the group. Two types of actions that are currently the topic of intense research are actions on hyperbolic spaces and actions on Hilbert spaces. In this article, we focus on two related properties: acylindrical hyperbolicity and Property (T) respectively. 

A group is said \textit{acylindrically hyperbolic} if it admits a non-elementary acylindrical action on a hyperbolic space. Acylindrical hyperbolicity was introduced by Osin in \cite{OsinAcyl}, unifying several known classes of groups with `negatively-curved' features. One of the most impressive consequences of the acylindrical hyperbolicity of a group is its \emph{SQ-universality} \cite{DGO} (ie., every countable group embeds into a quotient of the group we are looking at); as a corollary, such a group contains non abelian free subgroups and uncountably many normal subgroups (loosely speaking, it is far from being simple). We refer to \cite{OsinSurvey} for more information about acylindrically hyperbolic groups. 

Very little is known about the acylindrical hyperbolicity of the automorphism group of a graph product, even in the case of right-angled Artin groups. At one end of the RAAG spectrum, $\mathrm{Aut}(\bbZ^n)= \mathrm{GL}_n(\bbZ)$ is a higher rank lattice for $n \geq 3$, and thus does not have non-elementary actions on hyperbolic spaces by a recent result of Haettel \cite{HaettelRigidity}. The situation is less clear for $\mathrm{Aut}(F_n)$: while it is known that $\mathrm{Out}(F_n)$ is acylindrically hyperbolic for $n \geq 2$ \cite{BestvinaFeighnOutFnHyp}, the case of  $\mathrm{Aut}(F_n)$ seems to be open. For right-angled Artin groups whose outer automorphism group is finite, such as right-angled Artin group over atomic graphs, the problem boils down to the question of the acylindrical hyperbolicity of the underlying group, for which a complete answer is known \cite{MinasyanOsinTrees}. 

For general graph products, we obtain the following: 

 \begin{thmD}
Let $C_n\cG$ be a cyclic product of $n$ non trivial finitely generated groups, with $n \geq5$. Then $\mathrm{Aut}(C_n\cG)$ is acylindrically hyperbolic.
\end{thmD}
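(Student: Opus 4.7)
My strategy is to produce a loxodromic WPD element for an action of $\mathrm{Aut}(C_n\cG)$ on a hyperbolic space, so that acylindrical hyperbolicity follows from Osin's characterisation. The starting point is the acylindrical hyperbolicity of $C_n\cG$ itself: for $n\geq 5$ the cycle $C_n$ is not a non-trivial join of two subgraphs, hence by the standard criteria for graph products the cyclic product $C_n\cG$ of non-trivial finitely generated groups is already acylindrically hyperbolic. Concretely, for non-adjacent $i,j\in\bbZ_n$ and non-trivial $g_i\in G_i$, $g_j\in G_j$, the product $g=g_ig_j$ acts loxodromically and WPD on a hyperbolic graph $Y$ naturally derived from the Davis complex $X(C_n,\cG)$ (for instance, by coning off cosets of the vertex subgroups).

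\textbf{Extending the action.} By the earlier parts of the paper, the $C_n\cG$-action on $X(C_n,\cG)$ extends to an action of $\mathrm{Aut}(C_n\cG)$. Since this extension acts combinatorially on $X(C_n,\cG)$ and permutes cosets of vertex groups compatibly with the rotation/reflection of $C_n$ induced by the $\mathrm{Sym}$-factor of $Q=(\prod_{i\in\bbZ_n}\mathrm{Aut}(G_i))\rtimes \mathrm{Sym}$, it descends to an action of $\mathrm{Aut}(C_n\cG)$ on $Y$. The element $g$ remains loxodromic with the same quasi-axis, and the action is non-elementary, since one already finds independent loxodromic elements inside $C_n\cG$ by choosing different pairs of non-adjacent vertices of $C_n$.

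\textbf{WPD and the main obstacle.} The crux, and the main difficulty, is to verify that $g$ is still WPD for the extended $\mathrm{Aut}(C_n\cG)$-action on $Y$. Using the decomposition $\mathrm{Aut}(C_n\cG)\simeq C_n\cG\rtimes Q$ from Theorem A, any element coarsely fixing two far-apart points of the axis of $g$ writes uniquely as $hq$ with $h\in C_n\cG$ and $q\in Q$; the factor $h$ is controlled by the $C_n\cG$-WPD from the first paragraph, so the heart of the argument is to control the factor $q$, since $Q$ can be infinite when some $\mathrm{Aut}(G_i)$ is infinite. I expect the argument to exploit the rigid combinatorial footprint of the axis of $g$ inside $X(C_n,\cG)$, namely the specific alternating pattern of hyperplanes stabilised by cosets of $G_i$ and $G_j$ that the axis crosses: a $q\in Q$ coarsely fixing two sufficiently separated points must preserve this pattern, and combined with the explicit structure of $Q$ and the finite generation of the $G_i$, this should force $q$ into a finite subgroup of $Q$, thereby yielding WPD. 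The principal obstacle is precisely this: unlike a finite-index extension of an acylindrically hyperbolic group, where the property is inherited for free, the potentially infinite nature of $Q$ rules out any soft argument and requires a genuine geometric analysis marrying the algebraic structure of $Q$ from Theorem A to the combinatorial rigidity of the Davis complex.
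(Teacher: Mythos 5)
Your proposal correctly locates the crux of the matter --- the factor $Q=(\prod_i\mathrm{Aut}(G_i))\rtimes\mathrm{Sym}$ may be infinite, so no soft inheritance argument applies and one must genuinely bound the stabiliser of a pair of far-apart points --- but it stops exactly there: the decisive step is announced (``I expect the argument to\dots'', ``this should force $q$ into a finite subgroup'') rather than carried out, and the mechanism you propose for it would not work as stated. Preserving the ``combinatorial footprint'' of an axis, i.e.\ the alternating pattern of hyperplane labels it crosses, cannot control $q$: every element of $\mathrm{Loc}^0(C_n\cG)\simeq\prod_i\mathrm{Aut}(G_i)$ fixes all the vertices of the fundamental polygon, stabilises every tree-wall through it, and preserves the labels of \emph{all} hyperplanes, so the label pattern of an axis sees nothing of $q$. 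What is actually needed is that $q$ fix a specific translate $gP$ of the fundamental polygon, which (using that polygon stabilisers in $C_n\cG$ are trivial and that $C_n\cG$ is centreless) amounts to $q(g)=g$; and then one needs $g$ to be engineered so that $q(g)=g$ forces $q=\mathrm{Id}$. This is the missing construction: since each $G_i$ is finitely generated, choose finite sets $S_i\subset G_i$ determining the automorphisms of $G_i$, and concatenate all their elements into a single word $g$ alternating between non-adjacent vertex groups, so that $g$ has a \emph{unique} reduced form; then $q(g)=g$ forces $q$ to fix every syllable, hence to restrict to the identity on each $G_i$. Without this (or an equivalent) construction, your WPD verification has no content, and the finite-generation hypothesis --- which is exactly what makes the theorem true (see Remark \ref{remark:nonacyl} for a counterexample without it) --- is never actually used.

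For comparison, the paper does not pass through WPD elements on a coned-off space at all: it applies the Chatterji--Martin criterion directly to the action of $\mathrm{Aut}(C_n\cG)$ on the CAT(0) square complex $X'$ (checking essentiality, non-elementarity, irreducibility and absence of free faces), and the ``two points with finite common stabiliser'' hypothesis is verified by the pair $P$, $gP$ with $g$ as above, via the identity $\mathrm{Stab}(P)\cap\mathrm{Stab}(gP)=\{\varphi\in\mathrm{Loc}\mid\varphi(g)=g\}$. Your route could probably be made to work, but only after importing precisely this construction of $g$ and the unique-reduced-form argument; as written, the proof has a genuine gap at its central step.
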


We actually prove a slightly more general result. We say that a group $G$  \textit{ has its automorphisms determined by a finite set} if there exists a finite subset $S \subset G$ such that the only automorphism of $G$ fixing $S$ pointwise is the identity. For instance, the automorphisms of a finitely generated group are determined by a given finite generating set. The  group $(\bbQ, +)$ is an example of a non-finitely generated group whose automorphisms are determined by a finite set. The previous theorem still holds if we simply assume that each $G_i$ has its automorphisms determined by a finite set. However, this assumption on the automorphisms of the local groups cannot be removed, as we illustrate in Remark \ref{remark:nonacyl}. It is also worth noticing that, on the other hand, $\mathrm{Out}(C_n \mathcal{G})$ is generally not acylindrically hyperbolic, because it contains a finite-index subgroup splitting as a direct product according to Corollary B. Precisely, $\mathrm{Out}(C_n \mathcal{G})$ is acylindrically hyperbolic  only if $\mathrm{Aut}(G_i)$ is finite for all but one index, say $j$, with $\mathrm{Aut}(G_j)$ acylindrically hyperbolic. Indeed, as a consequence of Corollary B and \cite[Lemma 1]{MinasyanOsinErratum}, $\mathrm{Out}(C_n \mathcal{G})$ is acylindrically hyperbolic only if  $\prod\limits_{i \in \bbZ_n} \mathrm{Aut}(G_i)$ is, and an acylindrically hyperbolic group cannot split as a direct product of two infinite groups \cite[Corollary 7.3]{OsinAcyl}.


A group has \textit{Kazhdan's property (T)} if every unitary action on a Hilbert space with almost invariant vectors has a non-trivial invariant verctor. Property (T) for a group imposes for instance strong restrictions on the possible homomorphisms starting from that group (for a geometric realisation of this idea, see for example \cite{PaulinOuter}, whose main construction has been very inspiring in other contexts), and plays a fundamental role in several rigidity statements, including the famous Margulis' superrigidity. We refer to \cite{PropT}, and in particular to its introduction, for more information about Property (T).

We only possess a fragmented picture of the status of property (T) for automorphism groups of right-angled Artin groups. At one end of the RAAG spectrum, $\mathrm{Aut}(\bbZ^n)= \mathrm{GL}_n(\bbZ)$ is known to have property $(T)$ for $n \geq 3$. The situation is less clear for $\mathrm{Aut}(F_n)$: It is known that this automorphism group does not have property (T) for $n =2$ and $3$ \cite{McCoolPropTaut, GrunewaldLubotzkyAutF, BogopolskiVikentievAutF}, and has property (T) for $n=5$ by a recent result of \cite{Aut5PropT}, but the general case remains unknown. For right-angled Artin groups whose outer automorphism group is finite, their automorphism groups are known not to have property (T) as the underlying right-angled Artin group is CAT(0) cubical. For other RAAGs in between, certain of their automorphism groups are also known not to have property (T) by a result of \cite{AramayonaMartinezPerezAutRaag}.  To our knowledge, very little is known for more general graph products. We prove the following: 

\begin{thmE}
For every $n \geq5$ and every collection of non trivial groups $\mathcal{G}$, $\mathrm{Aut}(C_n\cG)$ does not have Property (T).
\end{thmE}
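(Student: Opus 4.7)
My plan is to produce an action of $\mathrm{Aut}(C_n\cG)$ on a finite-dimensional CAT(0) cube complex with no global fixed point, and then appeal to the classical theorem of Niblo and Reeves stating that a group with Kazhdan's property (T) acting cellularly by isometries on a finite-dimensional CAT(0) cube complex must fix a vertex.

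The action I have in mind is already in hand: as announced in the abstract and made precise earlier in the paper, the natural action of $C_n\cG$ on its Davis complex $\Delta(C_n,\cG)$ extends to a cellular isometric action of the full automorphism group $\mathrm{Aut}(C_n\cG)$. Since $C_n$ with $n \geq 5$ is triangle-free, its clique complex is one-dimensional, and hence $\Delta(C_n,\cG)$ is two-dimensional, putting us squarely in the Niblo--Reeves regime.

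The remaining step is to check that this extended action has no global fixed point. I would do this by restricting to the inner automorphism subgroup. Since the center of $C_n\cG$ is trivial for $n \geq 5$ (no vertex of $C_n$ is universal), this subgroup is isomorphic to $C_n\cG$ itself, sitting inside $\mathrm{Aut}(C_n\cG)$. By the standard properties of the Davis complex, $C_n\cG$ acts on $\Delta(C_n,\cG)$ with unbounded orbits, so it has no global fixed point; \emph{a fortiori}, neither does $\mathrm{Aut}(C_n\cG)$. Niblo--Reeves then gives the conclusion.

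The hard part is really contained in the results from earlier in the paper: the full machinery needed --- the construction of the Davis complex for a cyclic product, and the extension of the group action to $\mathrm{Aut}(C_n\cG)$ by cellular cubical isometries --- is exactly what was established to prove Theorem~A. Once this is available, the proof of non-(T) is a short and clean application of Niblo--Reeves, and the one point to be a little careful about is confirming that the extended action really preserves the cubical structure (not merely the underlying CAT(0) metric), which should be immediate from how the extension is constructed.
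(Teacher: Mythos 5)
Your proof is correct and follows essentially the same route as the paper, which deduces Theorem~E directly from the extension of the $C_n\cG$-action on the Davis complex to $\mathrm{Aut}(C_n\cG)$ together with the fixed-point theorem for Kazhdan groups acting on CAT(0) cube complexes. The only cosmetic difference is the reference: the paper invokes Niblo--Roller (which needs no dimension hypothesis), whereas you invoke Niblo--Reeves, whose finite-dimensionality hypothesis is indeed satisfied since the complex is two-dimensional.
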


We emphasise that this result does not assume any knowledge of the groups constituting the graph product, or the size of its outer automorphism group.\\

We now present our approach. Graph products of groups naturally act on their  \emph{Davis complex}, a CAT(0) cube complex whose faces correspond to cosets of parabolic subgroups. Our main goal is to show that the action of the graph product on its Davis complex extends to an action of the automorphism group. It is not clear  that an automorphism of the group should induce an automorphism of the Davis complex. Indeed, the definition of parabolic subgroups of a graph product does not make them \textit{a priori}  invariant under automorphisms. Worse, in the case of right-angled Artin group over general graphs, the existence of transvections and partial conjugations even shows that parabolic subgroups are not preserved by automorphisms in general. However, focusing on graph products over long cycles prevents such a behaviour. In order to extend the action in our situation, we provide a new algebraic description of the Davis complex that is invariant under automorphisms. In particular, we describe the various parabolic subgroups in terms of families of subgroups invariant under automorphisms: For instance, some of the subgroups involved are stabilisers of certain subcomplexes of the Davis complex known as \textit{tree-walls}, first introduced by Bourdon in the context of quasi-isometric rigidity \cite{Bourdon1997}. While we provide a new algebraic description of the Davis complex, our approach and the techniques used are almost entirely geometric, and rely heavily on both the CAT(0) cubical geometry and the small cancellation geometry of the Davis complex.

Once we know that the action of the graph product extends to an action of its automorphism group, several of our main results follow: Theorem E follows from the existence of a non-trivial action on a CAT(0) cube complex by \cite{NibloRollerPropT}, while Theorem A follows from a further geometric study of the action. In order to prove Theorem D, we use a recent criterion due to Chatterji--Martin \cite{Chatterji_Martin_Criterion_Acyl_Hyp} that allows one to prove the acylindrical hyperbolicity of a group acting on a CAT(0) cube complex.\\


The paper is organised as follows. We begin, in Section 1, by giving a few general definitions and statements about graph products, before recalling the definition of the complex at the centre of our approach: the Davis complex of a graph product of groups. We also recall results about small cancellation geometry, which will be used to study the geometry of the Davis complex. Section 2 is dedicated to the study of the main tool of our article, namely tree-walls in Davis complexes. The properties of these tree-walls are  exploited  in Section 3 to prove a purely algebraic characterisation of the Davis complex. Finally, Theorem A is proved in Section 4 and Theorem D in Section~5.

\paragraph{Acknowledgements.} The first author thanks the university of Vienna for its hospitality in December 2015, where this project originated. The second author thanks the MSRI for its support during the programme `Geometric Group Theory' in 2016, Grant  DMS-1440140, and the Isaac Newton Institute for its support during the programme `Non-positive curvature group actions and cohomology' in 2017, EPSRC Grant  EP/K032208/1, during which part of this research was conducted. The second author was partially supported by the ERC grant 259527 of G. Arzhantseva  and by the Lise Meitner FWF project M 1810-N25 of the second author.

\section{Preliminaries}

In this section, we recall standard facts about graph products of groups, certain polyhedral complexes  on which they act, and the geometry of such complexes.

\subsection{Graph products}\label{section:graphproducts}

\noindent
Given a simplicial graph $\Gamma$, whose set of vertices is denoted by $V(\Gamma)$, and a collection of groups $\mathcal{G}=\{ G_v \mid v \in V(\Gamma) \}$, we define the \emph{graph product} $\Gamma \mathcal{G}$ as the quotient
\begin{center}
$\left( \underset{v \in V(\Gamma)}{\ast} G_v \right) / \langle \langle gh=hg, \ h \in G_u, g \in G_v, \{ u,v \} \in E(\Gamma) \rangle \rangle$,
\end{center}
where $E(\Gamma)$ denotes the set of edges of $\Gamma$. Loosely speaking, it is obtained from the disjoint union of all the $G_v$'s, named the \emph{vertex-groups}, by requiring that two adjacent vertex-groups  commute. Notice that, if $\Gamma$ has no edges, $\Gamma \mathcal{G}$ is the free product of the groups of $\mathcal{G}$; on the other hand, if $\Gamma$ is a complete graph, then $\Gamma \mathcal{G}$ is the direct sum of the groups of $\mathcal{G}$. Therefore, a graph product may be thought of as an interpolation between free and direct products. Graph products also include two classical families of groups: If all the vertex-groups are infinite cyclic, $\Gamma \mathcal{G}$ is known as a \emph{right-angled Artin group}; If all the vertex-groups are cyclic of order two, then $\Gamma \mathcal{G}$ is known as a \emph{right-angled Coxeter group}.

%
	
%

\medskip \noindent
\textbf{Convention.} In all the article, we will assume for convenience that the groups of $\mathcal{G}$ are non-trivial. Notice that it is not a restrictive assumption, since a graph product with some trivial factors can be described as a graph product over a smaller graph all of whose factors are non-trivial.

\medskip \noindent
A \emph{word} in $\Gamma \mathcal{G}$ is a product $g_1 \cdots g_n$ where $n \geq 0$ and where, for every $1 \leq i \leq n$, $g_i$ belongs to some subgroup of $\mathcal{G}$. The $g_i$'s are the \emph{syllables} of the word, and $n$ is the \emph{length} of the word. Clearly, the following operations on a word do not modify the element of $\Gamma \mathcal{G}$ it represents:
\begin{itemize}
	\item[(O1)] delete the syllable $g_i=1$;
	\item[(O2)] if $g_i,g_{i+1} \in G$ for some $G \in \mathcal{G}$, replace the two syllables $g_i$ and $g_{i+1}$ by the single syllable $g_ig_{i+1} \in G$;
	\item[(O3)] if $g_i$ and $g_{i+1}$ belong to two adjacent vertex-groups, switch them.
\end{itemize}
A word is \emph{reduced} if its length cannot be shortened by applying these elementary moves. 
Every element of $\Gamma \mathcal{G}$ can be represented by a reduced word, and this word is unique up to applying the operation (O3); see \cite{GraphProduct} for more details. 
We mention the following observation for future use: 

\begin{cor}\label{cor:uniquereduced}
 Let $g:= g_1 \cdots g_k$ be a word where  no two consecutive syllables $g_i, g_{i+1}$ belongs to the same group of $\cG$ nor to groups of $\cG$ that are joined by an edge of $\Gamma$. Then the word $g= g_1 \cdots g_k$ is the \textit{unique} reduced form of $g$.\qed
\end{cor}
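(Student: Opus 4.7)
The plan is to deduce this corollary directly from the normal form theorem recalled just above: every element of $\Gamma \mathcal{G}$ admits a reduced expression, and any two reduced expressions of the same element differ by a finite sequence of operations of type (O3). Given this, the corollary follows from two short verifications.

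First, I would show that the given word $g_1 \cdots g_k$ is itself reduced. The only length-strictly-decreasing operations available are (O1), which removes a trivial syllable, and (O2), which merges two adjacent syllables lying in the same vertex-group. By hypothesis, no two consecutive $g_i,g_{i+1}$ belong to the same group, so (O2) is not directly applicable (and we may harmlessly assume each $g_i$ is non-trivial, since a trivial $g_i$ could be regarded as lying in the group of $g_{i+1}$, contradicting the hypothesis); so (O1) is not applicable either. Moreover, no (O3) move is available to rearrange the syllables into a configuration where (O1) or (O2) would apply, because by hypothesis no two consecutive syllables lie in groups joined by an edge of $\Gamma$. Hence no sequence of elementary operations on $g_1 \cdots g_k$ can strictly decrease its length, so the word is reduced.

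Second, by the uniqueness-up-to-(O3) part of the normal form theorem, any other reduced expression for $g$ is obtained from $g_1 \cdots g_k$ by a finite sequence of (O3) swaps. But the very same hypothesis that no two consecutive syllables of $g_1 \cdots g_k$ lie in edge-adjacent vertex-groups means that no (O3) swap applies to this word, and after any (hypothetical) such swap the resulting word would still have its first two affected syllables lying in non-edge-adjacent groups, so no further (O3) swap propagates either. Hence the (O3)-equivalence class of $g_1 \cdots g_k$ is the singleton $\{g_1 \cdots g_k\}$, and $g_1 \cdots g_k$ is the unique reduced expression for $g$.

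There is no genuine obstacle; the only subtlety to watch for in writing this up is the trivial-syllable edge case, which I would dispatch by the observation above that a trivial syllable at position $i$ forces $g_i$ to lie in the vertex-group of $g_{i+1}$, contradicting the hypothesis.
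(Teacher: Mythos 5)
Your argument is correct and is exactly the reasoning the paper leaves implicit behind its \qed: the hypotheses rule out every elementary move (O1)--(O3), so the word is reduced, and since no (O3) swap applies its (O3)-equivalence class is a singleton, giving uniqueness by Green's normal form theorem. Your handling of the trivial-syllable edge case is a reasonable way to dispatch the only point the paper glosses over.
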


If $\Lambda$ is an \emph{induced} subgraph of $\Gamma$ (ie., two vertices of $\Lambda$ are adjacent in $\Lambda$ if and only if they are adjacent in $\Gamma$), then the subgroup, which we denote by $\Lambda \mathcal{G}$, generated by the vertex-groups corresponding to the vertices of $\Lambda$ is naturally isomorphic to the graph product $\Lambda \mathcal{G}_{|\Lambda}$, where $\mathcal{G}_{|\Lambda}$ denotes the subcollection of $\mathcal{G}$ associated to the set of vertices of $\Lambda$. A \emph{join subgroup} of $\Gamma \mathcal{G}$ is a subgroup conjugated to $\Lambda \mathcal{G}$ for some \emph{join} $\Lambda \subset \Gamma$ (ie., $\Lambda$ contains two induced subgraphs $\Lambda_1$ and $\Lambda_2$, covering all the vertices of $\Lambda$, so that any vertex of $\Lambda_1$ is adjacent to any vertex of $\Lambda_2$).

\medskip \noindent
The following lemma is very useful in understanding subgroups of $\Gamma \mathcal{G}$.

\begin{lem}[{\cite[Corollary 6.15]{MinasyanOsinTrees}}]\label{lem:centralizer}
Let $H$ be a subgroup of $\Gamma \mathcal{G}$. Then either	$H$ is contained in a join subgroup or it contains an element whose centraliser is infinite cyclic. \qed
\end{lem}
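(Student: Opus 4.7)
The plan is to work with the normal form for graph products. Given $g \in \Gamma\cG$, define its \emph{support} $\mathrm{supp}(g) \subset V(\Gamma)$ to be the set of vertices $v$ such that a syllable in $G_v$ appears in some cyclically reduced representative of $g$; by the normal form theorem and the commutation move (O3), this is a well-defined conjugacy invariant. Call $g$ \emph{essential} if the full subgraph of $\Gamma$ on $\mathrm{supp}(g)$ is not a non-trivial join and if no vertex of $\Gamma \setminus \mathrm{supp}(g)$ is adjacent to every vertex of $\mathrm{supp}(g)$.

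The first step is to show that every essential element $g$ has infinite cyclic centralizer. If $h$ commutes with $g$, writing both in reduced form and pushing commutations through $g$, each syllable of $h$ must lie in a vertex group $G_v$ with either $v \in \mathrm{supp}(g)$ or $v$ adjacent to every vertex of $\mathrm{supp}(g)$. Essentiality rules out the second case, so $h \in \mathrm{supp}(g) \cG$. Restricting to the graph product over $\mathrm{supp}(g)$, where $g$ now has full support over a non-join graph, a classical centralizer computation in the spirit of Servatius' theorem for right-angled Artin groups shows that $h$ is a power of a root of $g$; hence $C(g)$ is infinite cyclic.

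The main step is to produce an essential element inside $H$. Since standard parabolic subgroups of $\Gamma\cG$ are closed under intersection, after conjugation we may speak of the smallest standard parabolic $\Lambda\cG$ containing $H$. The hypothesis that $H$ is not contained in a join subgroup then forces two facts: (i) $\Lambda$ is not a non-trivial join, and (ii) no vertex $v \in \Gamma \setminus V(\Lambda)$ is adjacent to all of $V(\Lambda)$ (otherwise $\Lambda \cup \{v\}$ is a non-trivial join and $(\Lambda \cup \{v\})\cG$ is a join subgroup containing $H$, a contradiction). It therefore suffices to exhibit $h \in H$ with $\mathrm{supp}(h) = V(\Lambda)$, as any such $h$ is automatically essential.

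Producing this $h$ is the main obstacle. By minimality of $\Lambda$, for each vertex $v \in V(\Lambda)$ there exists $h_v \in H$ with $v \in \mathrm{supp}(h_v)$. I would assemble these via a ping-pong argument: replacing each $h_v$ by a sufficiently high power (and possibly conjugating), then forming a product $h = h_{v_1}^{N} \cdots h_{v_k}^{N}$, one wants to guarantee that syllables in each $G_{v_i}$ survive all reductions. The technical difficulty is that cancellation and the commutation move (O3) can interact in unexpected ways and delete entire vertex-group contributions. I would control this by exploiting the CAT(0) cubical geometry of the Davis complex $D(\Gamma, \cG)$: a vertex $v$ lies in $\mathrm{supp}(h)$ exactly when the axis of $h$ crosses a hyperplane dual to $G_v$, and by choosing conjugating elements so that the axes of the $h_v$'s are pairwise transverse, a standard transversality argument ensures no such hyperplane is missed by the product's axis.
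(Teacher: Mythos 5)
First, a remark on the comparison itself: the paper does not prove this statement at all — it is imported verbatim from Minasyan--Osin \cite[Corollary~6.15]{MinasyanOsinTrees} and used as a black box — so any argument you give is necessarily independent of the paper. Judged on its own terms, your sketch has two genuine gaps. The first is the claim that every essential element has infinite cyclic centraliser. The centraliser theorem for graph products (the correct generalisation of Servatius' theorem to arbitrary vertex groups) gives $C(g)\cong\langle r\rangle\times\mathrm{lk}(\mathrm{supp}(g))\mathcal{G}$ for a cyclically reduced $g$ only when $\mathrm{supp}(g)$ is not a join \emph{and} contains at least two vertices; if $\mathrm{supp}(g)$ is a single vertex $v$, then $C(g)=C_{G_v}(g)\times\mathrm{lk}(v)\mathcal{G}$, which is essentially never infinite cyclic when $G_v$ has torsion. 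Your reduction lands exactly in this case when the minimal parabolic $\Lambda$ containing $H$ is a single isolated vertex: for $H=G_v\cong\bbZ_2$ inside $\bbZ_2\ast\bbZ_2$, the subgroup $H$ lies in no join subgroup yet no element of $H$ has infinite cyclic centraliser. So "$\mathrm{supp}(h)=V(\Lambda)$ implies $C(h)\cong\bbZ$" is false as stated; the single-vertex case must be excluded or treated separately (harmless for this paper, where $\Gamma=C_n$ has no isolated vertices, but fatal for the lemma in the stated generality).

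Second, and more seriously, the step you yourself flag as "the main obstacle" — producing one element of $H$ whose (cyclically reduced) support is all of $V(\Lambda)$ — is where the entire content of the cited result lives, and the proposed fix does not go through. (a) The elements $h_v$ may be torsion, hence elliptic on the Davis complex with no axis, so "take high powers and look at axes" is vacuous; note also that minimality of $\Lambda$ only supplies $h_v$ whose \emph{word} support contains $v$, and $v$ may disappear from the cyclically reduced form (e.g.\ $h_v=ksk^{-1}$ with $s$ a syllable elsewhere), which is precisely the elliptic situation. (b) Any conjugation used to arrange transversality must be by elements of $H$, otherwise the resulting product leaves $H$; your sketch conjugates freely in $\Gamma\mathcal{G}$. (c) For general $\Gamma$ the Davis complex is a CAT(0) cube complex that is not hyperbolic, and transversality of axes alone does not force the axis of $h_1^Nh_2^N$ to cross every hyperplane crossed by the axes of $h_1$ and $h_2$: in $\bbZ^2$ acting on $\bbR^2$, the elements $(1,0)$ and $(-1,1)$ have transverse axes while $(1,0)^N(-1,1)^N=(0,N)$ crosses no vertical hyperplane. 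Of course $\bbZ^2$ is a join, but the example shows that the non-join hypothesis must enter the combination step quantitatively (via normal-form cancellation estimates or a WPD-type argument), and your sketch never uses it there. As it stands, the proposal reduces the lemma to an unproved statement that is at least as hard as the lemma itself.
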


Here are some immediate consequences.

\begin{cor}\label{join}
Any subgroup of $\Gamma \mathcal{G}$ which splits non-trivially as a direct product is contained in a join subgroup. \qed
\end{cor}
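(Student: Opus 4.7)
The plan is to apply Lemma \ref{lem:centralizer} to the subgroup $H$ and to rule out the possibility that $H$ contains an element with infinite cyclic centraliser, which will leave only the alternative that $H$ is contained in a join subgroup. Concretely, assuming $H = A \times B$ with $A$ and $B$ both non-trivial, I will argue that the existence of $h \in H$ with $C_{\Gamma\cG}(h) = \langle z \rangle \cong \bbZ$ leads to a contradiction. Writing $h = ab$ uniquely with $a \in A$ and $b \in B$, the fact that $A$ and $B$ commute elementwise in the direct product forces both $a$ and $b$ to lie in $C_{\Gamma\cG}(h) = \langle z \rangle$; moreover $\langle a \rangle \cap \langle b \rangle \leq A \cap B = \{1\}$, and $a$ and $b$ commute.

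First I would dispose of the generic case where $a$ and $b$ are both non-trivial. There the subgroup $\langle a, b \rangle$ is an internal direct product $\langle a \rangle \times \langle b \rangle$ of two non-trivial cyclic groups, embedded inside the infinite cyclic group $\langle z \rangle$. Since $\bbZ$ is torsion-free of rank one, it admits no such non-trivial direct product decomposition, and one obtains an immediate contradiction.

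The hard part will be the boundary case where one of $a, b$ is trivial, say $a = 1$, so that $h = b \in B$. The previous argument fails here because only one obvious direct factor embeds into $\langle z \rangle$. To handle this, I would observe that every element of $A$ commutes with $h$, and therefore $A$ itself is contained in $\langle z \rangle$; in particular $A$ is non-trivial infinite cyclic, generated by some $a_0 = z^k$ with $k \neq 0$. Writing $h = z^m$ with $m \neq 0$ (using that $h \in \langle z \rangle \setminus \{1\}$), the common power $a_0^m = z^{km} = h^k$ lies in $A \cap B = \{1\}$, forcing $a_0$ to have finite order and contradicting $a_0 \in \langle z \rangle \cong \bbZ$. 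This closes the last case, so the second alternative of Lemma \ref{lem:centralizer} is impossible and $H$ must sit inside a join subgroup.
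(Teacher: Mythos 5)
Your proof is correct and is exactly the argument the paper leaves implicit: apply Lemma~\ref{lem:centralizer} and check that the direct product structure of $H$ is incompatible with $H$ containing an element whose centraliser in $\Gamma\mathcal{G}$ is infinite cyclic, since that centraliser would contain a direct product of two non-trivial subgroups of $\mathbb{Z}$, which must intersect non-trivially. The only step you assert without justification is that $h \neq 1$ in your boundary case, but this is immediate: if $h = 1$ then $C_{\Gamma\mathcal{G}}(h) = \Gamma\mathcal{G}$ would itself be infinite cyclic, and no subgroup of $\mathbb{Z}$ splits non-trivially as a direct product, so the hypothesis on $H$ already fails.
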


\begin{cor}\label{center}
	If $\Gamma$ is not a join and contains at least two vertices, then the centre of  $\Gamma \mathcal{G}$ is trivial. \qed
\end{cor}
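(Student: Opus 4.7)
The plan is to argue by contradiction. Suppose $Z:=Z(\Gamma\cG)$ is non-trivial, and apply Lemma~\ref{lem:centralizer} to $H:=Z$. If $Z$ contains an element $g$ with $C(g)\simeq\mathbb{Z}$, then since $g$ is central we have $C(g)=\Gamma\cG$, so $\Gamma\cG\simeq\mathbb{Z}$ is abelian. But $\Gamma$ has at least two vertices and is not a join, so it cannot be complete, and therefore admits two non-adjacent vertices $u,v$; the free product $G_u\ast G_v$ embeds in $\Gamma\cG$ and is non-abelian, a contradiction. So we may assume $Z$ is contained in some join subgroup $g_0\Lambda\cG g_0^{-1}$, and by normality of $Z$ we get $Z=g_0^{-1}Zg_0\subset\Lambda\cG$, where $\Lambda\subsetneq\Gamma$ since $\Gamma$ is not itself a join.

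It then remains to show that centrality of an element $z\in\Lambda\cG$ forces it to be trivial. Writing $z$ in reduced form as $g_1\cdots g_n$ with $g_i\in G_{u_i}$ and $u_i\in\Lambda$, I would pick any $u_i\in\mathrm{supp}(z)$ and any vertex $v\in V(\Gamma)\setminus\{u_i\}$, and argue that $v\sim u_i$ in $\Gamma$. Choosing a non-trivial $h\in G_v$, centrality gives $hzh^{-1}=z$; by analysing the possible reductions of the word $hg_1\cdots g_n h^{-1}$ via (O1)--(O3) and invoking the uniqueness of reduced forms (Corollary~\ref{cor:uniquereduced}), one deduces that the cancellation of $h$ with $h^{-1}$ can only occur if $h$ is commuted past every syllable $g_k$ that does not lie in $G_v$, which in particular imposes $v\sim u_i$. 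Hence every vertex in $\mathrm{supp}(z)$ is adjacent to every other vertex of $\Gamma$.

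But such a universal vertex $u_i$ would exhibit $\Gamma$ as the join $\{u_i\}\star(V(\Gamma)\setminus\{u_i\})$, contradicting the hypothesis. Therefore $\mathrm{supp}(z)=\emptyset$, so $z=1$ and $Z=\{1\}$. The main obstacle is the normal-form analysis in the second paragraph: turning the group-theoretic identity $hzh^{-1}=z$ into the adjacency condition $v\sim u_i$ requires a case analysis depending on whether $v$ belongs to $\mathrm{supp}(z)$ (so that $h$ may combine with a syllable via (O2)) or not, and relies crucially on Corollary~\ref{cor:uniquereduced} to exclude alternative reductions of $hzh^{-1}$.
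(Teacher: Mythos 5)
Your argument is correct in outline, but it is worth separating its two halves. The paper offers no written proof -- the corollary is stamped as an immediate consequence of Lemma~\ref{lem:centralizer} -- whereas the working part of your proof is really the classical normal-form computation of the centre of a graph product: a non-trivial central element forces every vertex of its support to be adjacent to all other vertices of $\Gamma$, and such a universal vertex exhibits $\Gamma$ as a join (here you correctly use the hypothesis that $\Gamma$ has at least two vertices, so the complement of the universal vertex is non-empty). Your first paragraph, which applies Lemma~\ref{lem:centralizer} to $H=Z$ and concludes $Z\subseteq\Lambda\mathcal{G}$ for a proper join $\Lambda$, is logically sound (a free product of two non-trivial groups is indeed non-abelian, so no central element has infinite cyclic centraliser), but it is a detour: the conclusion $z\in\Lambda\mathcal{G}$ is never used in your second and third paragraphs, which apply verbatim to an arbitrary central element of $\Gamma\mathcal{G}$. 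So your route is more elementary and self-contained than the one the paper gestures at, at the cost of redoing a normal-form analysis that the centraliser lemma is meant to package away.

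Two caveats on the step you yourself flag as the main obstacle. First, Corollary~\ref{cor:uniquereduced} is not the right tool: its hypothesis requires that no two consecutive syllables lie in the same \emph{or adjacent} vertex groups, which a general reduced word -- in particular the reduced form of a central element -- need not satisfy. What you actually need is the general uniqueness of reduced forms up to the shuffle operation (O3), stated in the paper just before that corollary. Second, the deduction ``$hzh^{-1}=z$ forces $v\sim u_i$'' is only sketched; the case analysis must track whether $v$ lies in $\mathrm{supp}(z)$, and if so whether $h$ and $h^{-1}$ are absorbed into the same $G_v$-syllable of $z$ or into two distinct ones (the latter is ruled out because (O3) cannot permute two syllables of the same vertex group, forcing $hg_j=g_j$ and hence $h=1$). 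The fact you are invoking is true and standard, but as written this key step is asserted rather than proved.
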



\noindent
We also mention the following result, which will be used in Section $3$.

\begin{lem}[{\cite[Proposition 3.13]{AM}}]\label{normalizer}
Let $\Lambda$ be an induced subgraph of $\Gamma$, and denote by $\Xi$ the induced subgraph of $\Gamma$ generated by $\Lambda$ and the vertices of $\Gamma$ adjacent to all the vertices of $\Lambda$. Then the normalizer of $\Lambda \mathcal{G}$ in $\Gamma \mathcal{G}$ is $\Xi \mathcal{G}$.\qed
\end{lem}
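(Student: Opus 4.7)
The plan is to prove the two inclusions separately. The inclusion $\Xi \mathcal{G} \subseteq N_{\Gamma \mathcal{G}}(\Lambda \mathcal{G})$ is essentially tautological from the defining relations of a graph product: each generator of $\Xi \mathcal{G}$ either lies in $\Lambda \mathcal{G}$ (if its vertex is in $\Lambda$) or commutes with every element of $\Lambda \mathcal{G}$ (if its vertex is adjacent to every vertex of $\Lambda$), and in either case normalizes $\Lambda \mathcal{G}$. Consequently the subgroup they generate normalizes $\Lambda \mathcal{G}$ as well.

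For the converse inclusion, the main tool is the uniqueness of the reduced form up to the commutation moves (O3). For any $g \in \Gamma \mathcal{G}$, the set $S(g) \subseteq V(\Gamma)$ of vertices whose vertex-group contributes a syllable to some (equivalently, any) reduced form of $g$ is well-defined, since (O3) preserves the multiset of syllables. Moreover, $g \in \Lambda \mathcal{G}$ if and only if $S(g) \subseteq V(\Lambda)$. I will show that $g \in N_{\Gamma \mathcal{G}}(\Lambda \mathcal{G})$ forces $S(g) \subseteq V(\Xi)$, which then yields $g \in \Xi \mathcal{G}$ by factoring $g$ as a product of its syllables.

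Suppose for contradiction that $g$ normalizes $\Lambda \mathcal{G}$ and some vertex $v$ lies in $S(g) \setminus V(\Xi)$. Then $v \notin V(\Lambda)$ and, by definition of $\Xi$, there exists $u \in V(\Lambda)$ with $u \neq v$ and $\{u,v\} \notin E(\Gamma)$. Pick a non-trivial $h \in G_u$; then $h \in \Lambda \mathcal{G}$, so the conjugate $k := g h g^{-1}$ also lies in $\Lambda \mathcal{G}$. Writing a reduced form $g = g_1 \cdots g_n$, the word $W = g_1 \cdots g_n \cdot h \cdot g_n^{-1} \cdots g_1^{-1}$ represents $k$, and the key claim is that $v \in S(k)$, contradicting $S(k) \subseteq V(\Lambda)$.

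The crux --- and the step I expect to require the most care --- is ruling out all possible cancellations in $W$ that might eliminate every $v$-syllable. A syllable $g_i \in G_v$ in the left half of $W$ can only disappear through iterated applications of (O2) (possibly combined with (O1) and (O3)), which would require it to become adjacent to another $v$-syllable. Combination with another left-half syllable $g_j$ with $v_j = v$ would contradict the reducedness of $g_1 \cdots g_n$. Combination with a right-half syllable $g_j^{-1}$ with $v_j = v$ would require sliding $g_i$ past every letter strictly between them via (O3), and in particular past the central syllable $h \in G_u$; but since $u \neq v$ and $\{u,v\} \notin E(\Gamma)$, the letters $g_i$ and $h$ do not commute, so this sliding is impossible. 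Hence $g_i$ persists through any sequence of reductions of $W$, so $v \in S(k)$. As $v \notin V(\Lambda)$, this is the desired contradiction, completing the proof.
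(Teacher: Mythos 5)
The paper does not actually prove this lemma: it is imported wholesale from \cite[Proposition 3.13]{AM}, so any self-contained argument is by definition a different route. Your overall strategy is the standard normal-form proof and is sound: the easy inclusion $\Xi\mathcal{G}\subseteq N_{\Gamma\mathcal{G}}(\Lambda\mathcal{G})$ is correct as written, the reduction of the converse to the claim $S(g)\subseteq V(\Xi)$ is correct, and conjugating a non-trivial $h\in G_u$ with $u\in V(\Lambda)$ not adjacent to $v$ is exactly the right test.

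The one step that is not yet airtight is the cancellation analysis in $W=g_1\cdots g_n\, h\, g_n^{-1}\cdots g_1^{-1}$. You argue that a $v$-syllable from the left half cannot meet its mirror image because it cannot slide past the central syllable $h$; this tacitly assumes that a $G_u$-syllable remains interposed between them throughout the reduction. But $h$ may first be merged via (O2) with $G_u$-syllables coming from $g$ and $g^{-1}$, and at an intermediate stage the resulting syllable could a priori become trivial and be deleted by (O1), removing the obstruction; your argument does not address this. The clean repair is to avoid chasing reduction sequences altogether: write $g=ab$ where $b$ is a subword of (a shuffle of) the reduced form of $g$, of maximal syllable length, all of whose syllables lie in vertex groups $G_w$ with $w=u$ or $w$ adjacent to $u$. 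Then $h':=bhb^{-1}$ is a non-trivial element of $G_u$, the word $a\,h'\,a^{-1}$ is reduced by maximality of $b$ (any syllable of $a$ that could reach the centre or its mirror image would have to commute with $h'$ and with everything to its right, hence could have been absorbed into $b$), so $S(ghg^{-1})=S(a)\cup\{u\}$. Since $v\neq u$ and $v$ is not adjacent to $u$, no $v$-syllable of $g$ lies in $b$, whence $v\in S(a)\subseteq S(ghg^{-1})$ — your desired contradiction. With this patch the proof is complete and, in my view, a worthwhile self-contained alternative to the citation.
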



\noindent
In this article, we will be interested in the case where $\Gamma$ is a cycle $C_n$ of length $n \geq 5$. We call such a graph product a \textit{cyclic product} of groups. For convenience, the vertex-groups will be denoted by $G_1, \ldots, G_{n}$ such that $G_i$ is adjacent to $G_{i-1}$ and $G_{i+1}$, where indices are to be understood modulo $n$. We recall that we assume, by convention, that the $G_i$'s are non trivial.

\subsection{The Davis complex of a cyclic product of groups}
\label{Davis} 

In this section, we recall a construction due to Davis -- valid for every graph product of groups -- and study it in the context of cyclic products. We start from an arbitrary graph product of groups $\Gamma\cG$.

\begin{definition}[Davis complex]
	We define the \textit{Davis complex} of a graph product as follows: 
	\begin{itemize}
		\item Vertices correspond to left cosets of the form $g\Lambda \cG$ for $g\in \Gamma\cG$ and $\Lambda \subset \Gamma$ a complete subgraph.
		\item For every $g\in \Gamma\cG$ and complete subgraphs $\Lambda_1, \Lambda_2 \subset \Gamma$ that differ by exactly one vertex, one puts an edge between the vertices $g\Lambda_1\cG$ and $g\Lambda_2\cG$.
		\item One obtains a cube complex from this graph by adding for every $k \geq 2$ a $k$-cube for every  subgraph isomorphic to the $1$-skeleton of a $k$-cube.
		\end{itemize}
This complex comes with an action of $\Gamma\cG$: The group $\Gamma\cG$ acts on the vertices by left multiplication on  left cosets, and this action extends to the whole complex.
	\end{definition}

\begin{thm}[{\cite[Example II.12.30(2)]{BH}}]
The Davis complex of a graph product is a CAT(0) cube complex.
\qed
\end{thm}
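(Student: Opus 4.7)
The plan is to verify Gromov's classical criterion: a cube complex is CAT(0) if and only if it is simply connected and the link of every vertex is a flag simplicial complex. The argument splits into three tasks: checking that the construction indeed yields a cube complex with consistent attaching maps, verifying the flag condition on links, and establishing simple connectivity.

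For the link condition, one first describes the neighbours of a vertex $g\Lambda\cG$ explicitly. They fall into two families: ``upward'' neighbours $g(\Lambda \cup \{v\})\cG$, one for each $v \notin \Lambda$ adjacent in $\Gamma$ to every vertex of $\Lambda$ (well-defined because replacing $g$ by $gh$ with $h \in \Lambda\cG$ preserves the target, since $h$ commutes with $G_v$); and ``downward'' neighbours $gh(\Lambda \setminus \{v\})\cG$ for $v \in \Lambda$ and $h$ ranging over the quotient $\Lambda\cG/(\Lambda\setminus\{v\})\cG \cong G_v$. One checks directly that two link-vertices span an edge in the link precisely when the corresponding four Davis-complex vertices bound a square, which is automatic for any pair except two upward ones $v, v'$ (requiring $v, v'$ adjacent in $\Gamma$) and two downward ones $(v, h), (v', h')$ (requiring $v \neq v'$). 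The link is therefore the simplicial join of the clique complex of an induced subgraph of $\Gamma$ with the multipartite complete simplicial complex on $\bigsqcup_{v \in \Lambda} G_v$; both factors are flag, hence so is their join.

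For simple connectivity, one invokes the normal form theorem for graph products \cite{GraphProduct}: any edge loop based at $\{e\}$ spells a word $g_1 \cdots g_n$ representing the identity in $\Gamma\cG$, which can be reduced to the empty word using the elementary moves (O1)--(O3). Each such move corresponds to an elementary homotopy through a single cell of the Davis complex: move (O3) through a 2-cube witnessing the commutation of two adjacent vertex-groups, and moves (O1)--(O2) through back-tracking along an edge within a single vertex-group.

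The main technical obstacle will be the consistency check for the cube structure itself: one must argue that distinct combinatorial $k$-cubes in the 1-skeleton determine distinct filled cubes, and that two cubes meet along a (possibly empty) subcube. This amounts to showing that a $k$-cube is uniquely determined by ``corner data'' of the form $(g\Lambda\cG, S)$ with $\Lambda \cup S$ spanning a complete subgraph of $\Gamma$, and rests on applying the normal form theorem inside the relevant parabolic subgroup to ensure that the various cosets involved are genuinely distinct.
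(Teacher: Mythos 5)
The paper does not actually prove this statement: it is quoted verbatim from \cite[Example II.12.30(2)]{BH}, so there is no internal argument to compare yours against. Your proof via Gromov's criterion (simply connected plus flag links) is the standard one and is essentially what underlies that reference. Your computation of the link of $g\Lambda\cG$ is correct: it is the join of the clique complex on the set of vertices of $\Gamma$ adjacent to all of $\Lambda$ (upward directions) with the complete multipartite complex on $\bigsqcup_{v\in\Lambda}G_v$ (downward directions), and a join of flag complexes is flag. Three caveats on rigour. First, flagness of the link only follows once you know the simplices of the link are exactly the expected ones; this is the consistency check you defer to your last paragraph, and it is where the real content lies, since the paper's definition fills in \emph{every} subgraph isomorphic to the $1$-skeleton of a cube and one must verify that such subgraphs are precisely the coset families $\{\,g'\Lambda'\cG : \Lambda_0\subseteq\Lambda'\subseteq\Lambda_0\cup S\,\}$ --- your plan to do this with the normal form theorem inside the relevant parabolic is the right one. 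Second, your simple-connectivity argument as written applies only to edge loops contained in the subcomplex spanned by cosets of the trivial and rank-one subgroups (only those loops ``spell a word''); a general edge loop visits higher-rank vertices and must first be homotoped into that subcomplex by pushing across squares before the moves (O1)--(O3) can be invoked. Third, Gromov's lemma in the form you use it requires finite dimension, which holds exactly when $\Gamma$ has bounded clique number (automatic for finite $\Gamma$, and dimension $2$ in the paper's case $\Gamma=C_n$). None of these is a fatal gap, but each needs to be said for the argument to be complete.
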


From now on, we fix an integer $n \geq 5$ and a collection $\cG= \{G_i, i \in \bbZ_n\}$ of (non trivial) groups, and we consider the cyclic product $C_n\cG$. In this case, the maximal complete subgraphs of $C_n$ are edges, hence the resulting Davis complex is a CAT(0) square complex. More precisely, there are three types of vertices: cosets of the trivial subgroup, cosets of the form $gG_i$, and cosets of the form $g(G_i \times G_{i+1})$ ($g \in C_n\cG$ and $i \in \bbZ_n$).

Note that the Davis complex of this cyclic product is naturally the cubical subdivision of a polygonal complex made of $n$-gons, obtained as follows: vertices are the left cosets of the form $g(G_i \times G_{i+1})$; for every $g \in C_n\cG$ and $i \in \bbZ_n$ we add an edge between  $g(G_i \times G_{i+1})$ and $g(G_{i+1} \times G_{i+2})$; and for every $g \in C_n\cG$ we add a polygon with boundary the $n$-cycle $g(G_1 \times G_{2}), g(G_2 \times G_{3}), \ldots, g(G_n \times G_{1})$. 

\medskip \noindent
\textbf{Notation.} We will denote by $X$ this underlying polygonal complex, and by $X'$ the Davis complex, which we identify with the first cubical subdivision of $X$.\\


We  mention here a few useful observations about the action of $C_n\cG$ on $X$.

\begin{obs}\label{obs:stab} Stabilisers for the action behave as follows:
	\begin{itemize}
		\item The stabiliser of a vertex of $X$ is conjugated to a subgroup of the form $G_i \times G_{i+1}$.
		 \item The stabiliser of an edge of $X$ is conjugated to a subgroup of the form $G_i$.
		 \item The stabiliser of a polygon of $X$ is trivial. \qed
		\end{itemize}
	\end{obs}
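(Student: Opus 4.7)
The plan is to unpack the definition of the action on cosets and then use the standard intersection formula for parabolic subgroups in graph products, namely $\Lambda_1\mathcal{G} \cap \Lambda_2\mathcal{G} = (\Lambda_1 \cap \Lambda_2)\mathcal{G}$, which can be verified on reduced words via Corollary~1.5. The observation breaks up cleanly by cell-dimension, so I would treat the three bullet points in turn.

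For a vertex $g(G_i \times G_{i+1})$ of $X$, the $C_n\mathcal{G}$-action is left multiplication on left cosets, so $h$ fixes this vertex iff $hg(G_i \times G_{i+1}) = g(G_i \times G_{i+1})$, i.e.\ iff $h \in g(G_i \times G_{i+1})g^{-1}$. This conjugate is the stabiliser, which settles the first item. For an edge between $g(G_i \times G_{i+1})$ and $g(G_{i+1} \times G_{i+2})$, an element stabilises the edge only if it fixes both endpoints (distinct vertices of $X$ have distinct cosets attached to them, so no edge can be flipped by a group element that swaps its endpoints without equating the two parabolics, which cannot happen for $n \geq 5$). Hence the edge-stabiliser equals
\[
g(G_i \times G_{i+1})g^{-1} \cap g(G_{i+1} \times G_{i+2})g^{-1} = g\bigl((G_i \times G_{i+1}) \cap (G_{i+1} \times G_{i+2})\bigr)g^{-1},
\]
and the intersection formula identifies this with $gG_{i+1}g^{-1}$, a conjugate of $G_{i+1}$.

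For the polygon based at $g$, with vertex set $\{g(G_i \times G_{i+1}) \mid i \in \mathbb{Z}_n\}$, I would first argue that the setwise stabiliser equals the pointwise stabiliser. Indeed, if $h$ permutes the vertices and $h\cdot g(G_i \times G_{i+1}) = g(G_j \times G_{j+1})$ with $j \neq i$, then conjugating gives a coset of $G_i \times G_{i+1}$ equal to a coset of $G_j \times G_{j+1}$, forcing $\{i,i+1\} = \{j,j+1\}$; for $n \geq 5$ this is impossible unless $i = j$. Hence the polygon-stabiliser is contained in the intersection of all the vertex-stabilisers, namely $g\bigl(\bigcap_{i \in \mathbb{Z}_n}(G_i \times G_{i+1})\bigr)g^{-1}$. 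Applying the intersection formula iteratively (or once globally), this equals $g(\varnothing \mathcal{G})g^{-1} = \{1\}$, since $n \geq 5$ guarantees that no vertex of $C_n$ belongs to every edge.

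There is no real obstacle here, only bookkeeping; the one point that deserves a sentence of care is the non-existence of an element that non-trivially permutes the vertices of the polygon, and this is handled uniformly by observing that two cosets of \emph{distinct} subgroups of $C_n\mathcal{G}$ cannot be equal. Once the intersection formula for parabolics is in hand, the three items follow with essentially no computation.
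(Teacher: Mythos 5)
Your argument is correct and is precisely the reasoning the paper leaves implicit: the Observation carries no proof because it is regarded as immediate from the definition of the action by left multiplication on cosets, together with the standard fact that parabolic subgroups of a graph product intersect along the parabolic on the intersection of the defining subgraphs. The only quibble is the citation: the unique-reduced-form statement you want is the paper's first corollary in Section 1 (on words with no two consecutive commuting or equal-type syllables), not "Corollary 1.5", but nothing in your argument depends on this.
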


We mention here a few elementary results about the geometry of $X$.
\begin{obs} We have the following:
	\begin{itemize}
	\item The polygonal complex $X$ is a cocompact $C(n)$-$T(4)$ complex. In particular it is Gromov-hyperbolic since $n \geq 5$.
			\item Two polygons of $X$ intersect in at most one edge. \qed
\end{itemize}
	\end{obs}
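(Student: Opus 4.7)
The plan is to verify each claim directly from the coset description of the polygons of $X$. Cocompactness is immediate, since $C_n\cG$ acts transitively on the polygons (with $P_g = g \cdot P_1$), and there are finitely many orbits of cells: one orbit of polygons, $n$ orbits of edges indexed by the stabilisers $G_i$, and $n$ orbits of vertices indexed by the subgroups $G_i \times G_{i+1}$. So the quotient is a finite $2$-complex.

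The core step is the second bullet. The edges on the boundary of $P_g$ are naturally indexed by the cosets $gG_i$, $i \in \bbZ_n$, where $gG_i$ is the $X'$-midpoint of the corresponding boundary edge. If two distinct polygons $P_g$ and $P_{g'}$ share an edge, then $gG_i = g'G_j$ for some $i,j$; since both $g$ and $g'$ lie in this common coset, this forces $g^{-1}g' \in G_i \cap G_j$. But distinct vertex-groups in a graph product meet trivially, and $g \neq g'$, so we must have $i = j$ with $g^{-1}g' \in G_i \setminus \{1\}$. Any second shared edge $gG_k$ would similarly give $g^{-1}g' \in G_i \cap G_k$, forcing $k=i$. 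Hence the shared edge is unique.

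The first bullet is then deduced in two pieces. For $C(n)$: every edge $hG_i$ of $X$ lies on the boundary of every polygon $P_{hx}$ with $x \in G_i$, so, since $|G_i| \geq 2$, it is shared with at least one other polygon, hence is a piece; together with the previous paragraph this shows the boundary of each $n$-gon is a concatenation of exactly $n$ pieces of length $1$. For $T(4)$: a direct link computation at $v = g(G_i \times G_{i+1})$ shows that the $X$-edges at $v$ split into two families, indexed respectively by $G_{i+1}$ and $G_i$ (namely cosets of $G_i$ and of $G_{i+1}$ inside $g(G_i \times G_{i+1})$); exploiting commutativity of $G_i$ and $G_{i+1}$, each polygon $P_{gab}$ through $v$ with $a \in G_i$, $b \in G_{i+1}$ contributes exactly the pair of incident edges $gbG_i$ and $gaG_{i+1}$. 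Since $(a,b)$ varies freely, the link realises the complete bipartite graph $K_{|G_i|, |G_{i+1}|}$, which in particular contains no $3$-cycle.

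Gromov-hyperbolicity for $n \geq 5$ then follows from the classical small cancellation criterion: a cocompact $C(p)$-$T(q)$ polygonal complex with $\frac{1}{p} + \frac{1}{q} < \frac{1}{2}$ is hyperbolic, and this strict inequality holds for $p = n \geq 5$ and $q = 4$. The substantive step of the whole observation is really the uniqueness of the shared edge; cocompactness, $T(4)$, and the hyperbolicity conclusion are then routine, and this uniqueness is where the non-triviality of the vertex-groups and the triviality of pairwise intersections of distinct vertex-groups in a graph product are used essentially.
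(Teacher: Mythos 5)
Your proof is correct. The paper states this observation without proof, treating it as an elementary consequence of the coset description of $X$, and your verification is precisely the intended argument: the key points — that an edge shared by two distinct polygons $P_g$, $P_{g'}$ forces $g^{-1}g'$ into a single vertex group (using that distinct vertex groups of a graph product intersect trivially), and that links of vertices are complete bipartite graphs $K_{|G_i|,|G_{i+1}|}$ — are exactly what the authors rely on, the bipartite link structure being invoked explicitly later in their proof that each hyperplane of $X'$ has exactly one combinatorial hyperplane that is a tree-wall.
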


\subsection{The combinatorial Gau\ss-Bonnet theorem}

%
%
%
%
%

We now recall the main tool used to study the geometry of such complexes.\\

A \textit{disc diagram} $D$ over a polygonal complex $Y$ is a finite contractible planar CW-complex, together with a cellular map  $D \ra Y$ which restricts to a homeomorphism on every closed $2$-cell. We will say that a disc diagram is \textit{reduced} if no two distinct $2$-cells of $D$ that share a $1$-cell are mapped to the same polygon of $Y$. A disc diagram is said to be \textit{non-degenerate} if its boundary is homeomorphic to a circle, and \textit{degenerate} otherwise. A vertex of $D$ is said to be \textit{internal} if its link is homeomorphic to a circle, and is a \textit{boundary vertex} otherwise.

Recall that, by the Lyndon--van Kampen Theorem, one can associate to every null-homotopic and non-backtracking loop $\gamma: S \ra Y$ a reduced disc diagram $D \ra Y$ whose restriction to the boundary is the given loop. We will say that the disc diagram $D \ra Y$ admits $\gamma$ as boundary, or that $D$ fills $\gamma$. 


 Let $D$ be a planar contractible  polygonal complex. The \textit{curvature} of a vertex $v$ of $D$ is defined as:
$$\kappa_D(v) = 2\pi - \pi \cdot \chi(\mbox{link}(v)) - n_v \frac{\pi}{2},$$
where $n_v$ denotes the number of closed $2$-cells of $D$ containing $v$.

The  \textit{curvature} of a closed $2$-cell $f$ of $D$ is defined as
$$\kappa_D(f) = 2\pi-n_f\frac{\pi}{2},$$
where $n_f$ denotes the number of vertices of $D$ contained in $f$.

The following version of the combinatorial Gau\ss--Bonnet Theorem, which follows the presentation of McCammond--Wise \cite[Theorem 4.6]{McCammondWiseFansLadders}, is a powerful tool in controlling the geometry of disc diagrams.

\begin{thm}[Combinatorial Gau\ss-Bonnet Theorem {\cite[Theorem 4.6]{McCammondWiseFansLadders}}]\label{thm:Gauss_Bonnet} Let $D$ be a planar contractible polygonal complex. Then:
\begin{equation*}
~~~~~~~~~~~~~~~~~~~~~~~~~~~~~~~~~~~~\underset{v}{\sum} \kappa_D(v) + \underset{f}{\sum} \kappa_D(f) = 2\pi. ~~~~~~~~~~~~~~~~~~~~~~~~~~~~~~~~~~~~\qed
\end{equation*}
\label{GaussBonnet}
\end{thm}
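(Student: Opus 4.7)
The plan is to expand the left-hand side directly and collapse it to $2\pi\chi(D) = 2\pi$ via two double-counting identities together with the contractibility of $D$. Write $V$, $E$, $F$ for the number of vertices, edges, and closed $2$-cells of $D$, and set $I := \sum_v n_v$. Since both $\sum_v n_v$ and $\sum_f n_f$ count vertex-face incidences in $D$, one immediately has $\sum_v n_v = \sum_f n_f = I$. Substituting the definitions of $\kappa_D(v)$ and $\kappa_D(f)$ then gives
\[
\sum_v \kappa_D(v) + \sum_f \kappa_D(f) = 2\pi V + 2\pi F - \pi \sum_v \chi(\mathrm{link}(v)) - \pi I.
\]

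The next step is to compute $\sum_v \chi(\mathrm{link}(v))$ by a second double count. For each vertex $v$, $\mathrm{link}(v)$ is a graph whose vertices correspond to the edges of $D$ incident to $v$, and whose edges correspond to the corners of $2$-cells at $v$. Summing the number of link-vertices over all $v$ gives $2E$ (each edge contributes two endpoints), while summing the number of link-edges over all $v$ gives $I$ (each vertex-face incidence contributes one corner). Therefore $\sum_v \chi(\mathrm{link}(v)) = 2E - I$, and the displayed expression collapses to
\[
2\pi V + 2\pi F - \pi(2E - I) - \pi I = 2\pi(V - E + F) = 2\pi\,\chi(D).
\]
Since $D$ is planar and contractible, $\chi(D) = 1$, and the identity follows.

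The only delicate point I anticipate is the bookkeeping of $\mathrm{link}(v)$ for non-interior vertices of $D$: a planar contractible polygonal complex can exhibit spurs, cut vertices, or $2$-cells whose attaching map is not an embedding, and in each of these cases one must confirm that the combinatorial description of $\mathrm{link}(v)$ as the graph with vertex-set the incident edges and edge-set the corners at $v$, counted with appropriate multiplicities, remains valid. Once this local model is justified, both global double counts and the reduction to $2\pi\,\chi(D)$ are entirely mechanical, so this is the only step I would check with real care.
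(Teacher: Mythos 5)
Your argument is correct, but note that the paper gives no proof at all here: the statement is quoted from McCammond--Wise with a \verb|\qed|, so any comparison is really with the proof in that reference, and your double count is essentially that standard argument. Concretely: writing $I=\sum_v n_v=\sum_f n_f$ for the number of vertex--face incidences, your expansion $2\pi V+2\pi F-\pi\sum_v\chi(\mathrm{link}(v))-\pi I$ is right, and the identity $\sum_v\chi(\mathrm{link}(v))=2E-I$ (link-vertices are edge-ends, link-edges are corners) collapses everything to $2\pi\chi(D)=2\pi$. The one caveat you raise is the right one, and it is worth being explicit about how it resolves: for the two double counts to mesh, $n_v$ and $n_f$ must both be read as counting \emph{corners} (so that $\sum_v n_v=\sum_f n_f=I=$ number of link-edges), whereas the paper's literal phrasing (``number of closed $2$-cells containing $v$'', ``number of vertices contained in $f$'') undercounts when a $2$-cell has several corners at the same vertex. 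In the paper's actual applications this never happens, since its disc diagrams restrict to homeomorphisms on closed $2$-cells, so each closed $2$-cell of $D$ is an embedded polygon and incidences coincide with corners; in full generality one should take the corner-counting definitions, which is what McCammond--Wise do. With that reading fixed, your proof is complete and mechanical, as you say.
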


\section{Tree-walls of $X$}

\subsection{Geometric  properties}

\medskip \noindent
In this section, we are interested in specific subgraphs in $X^{(1)}$, called \emph{tree-walls}, which will play a fundamental role in Section \ref{section:algcharac}. These subspaces have been introduced for the first time by Bourdon in \cite{Bourdon1997}.

\begin{definition}
The \textit{label} of an edge of $X$ is the unique integer $i\in \bbZ_n$ such that the stabiliser of that edge is conjugated to $G_i$ (see Observation \ref{obs:stab}).
\end{definition}

\begin{definition}
A \textit{tree-wall} of $X$ is a maximal connected subgraph of $X^{(1)}$ whose edges all have the same label. 
\end{definition}

\noindent
When $X$ is endowed with its CAT(0) metric, tree-walls turn out to be convex subspaces. As a consequence, they are contractible graphs, ie., trees. This statement essentially follows from the following observation:

\begin{lem}
Two adjacent edges of a tree-wall make an angle $\pi$.
\end{lem}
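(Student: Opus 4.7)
The plan is to analyse the link of the common endpoint $v$ of the two edges and to show that two edges of the same label sit at link-distance $\pi$, which yields an Alexandrov angle of $\pi$ in the CAT(0) metric on $X$ coming from its cubical subdivision $X'$.

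By Observation \ref{obs:stab}, $\mathrm{Stab}(v)$ is conjugate to some $G_{i-1} \times G_i$, which I may assume equal to $G_{i-1} \times G_i$ after translating by an element of $C_n\cG$. Any edge at $v$ has stabiliser conjugate to some $G_j$ sitting inside $\mathrm{Stab}(v)$, so only the labels $j \in \{i-1, i\}$ occur at $v$, and the two adjacent tree-wall edges therefore share a common label, which we may take to be $i$ up to symmetry.

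I would then describe the link of $v$ in $X$ explicitly. The edges at $v$ of label $i$, resp. $i-1$, are in natural bijection with the cosets $(G_{i-1} \times G_i)/G_i \simeq G_{i-1}$, resp. $(G_{i-1} \times G_i)/G_{i-1} \simeq G_i$; and the polygons containing $v$, all with trivial stabiliser, are in bijection with $\mathrm{Stab}(v) = G_{i-1} \times G_i$. Using that $G_{i-1}$ and $G_i$ commute inside $\mathrm{Stab}(v)$, the polygon indexed by $(a,b) \in G_{i-1} \times G_i$ contributes a corner at $v$ joining its label-$i$ edge (parametrised by $a$) to its label-$(i-1)$ edge (parametrised by $b$), and distinct polygons give distinct corners because $G_{i-1} \cap G_i = \{1\}$. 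This identifies the link of $v$ as the complete bipartite graph $K_{|G_{i-1}|,|G_i|}$, each link edge having length $\pi/2$ in the metric inherited from $X'$ since each corner of a polygon at $v$ is a right-angle corner of a unit Euclidean square.

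Since the vertex groups are non-trivial, both sides of this bipartition are non-empty, and any two vertices on the same side are joined by a length-$\pi$ path through any vertex on the other side. The two given tree-wall edges correspond to two vertices on the same side of the bipartition, so the Alexandrov angle between them at $v$ equals the link distance capped at $\pi$, giving exactly $\pi$. The main obstacle I anticipate is the bookkeeping needed to identify the link with the expected bipartite graph — in particular checking that distinct polygons never give the same pair of corner edges — which boils down to the commutation and trivial intersection of $G_{i-1}$ and $G_i$; once the link is identified, the angle computation is a standard CAT(0) fact.
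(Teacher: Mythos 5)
Your proof is correct and rests on the same underlying idea as the paper's (much terser) argument: the link of a vertex of $X$ is bipartite with respect to the edge-labelling, so two edges with the same label never span a corner of a polygon and hence lie at link-distance at least $\pi$, forcing an Alexandrov angle of $\pi$. The additional bookkeeping you carry out (completeness of the bipartite link, injectivity of the polygon-to-corner map) is accurate but not needed for the lower bound on the link distance, which is all the lemma requires.
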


\begin{proof}
For two adjacent edges of a tree-wall, it is enough to show that the corresponding adjacent edges of $X'$ make an angle $\pi$. If they made an angle $\pi/2$, they would be contained in a square of $X$, which, by construction of the Davis complex, would imply that they have different labels.
\end{proof}


\medskip \noindent
By combining the previous observation and some elementary results about CAT(0) geometry, the desired conclusion follows:

\begin{cor}\label{cor:subtree}
Tree-walls are convex subtrees of $X$. \qed
\end{cor}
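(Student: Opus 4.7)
The plan is to bootstrap the local angle information from the preceding lemma into a global CAT(0) statement. The key observation is that any non-backtracking edge-path contained in a tree-wall $T$ is a local geodesic of $X$: at each interior vertex of the path, the two consecutive edges belong to $T$ and are therefore either distinct (in which case the previous lemma gives angle $\pi$) or coincide (excluded by the non-backtracking assumption). Since $X$ is CAT(0), every local geodesic is a global geodesic, and geodesics between two points are unique.

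From this I would deduce convexity as follows. Let $p,q$ be two points of $T$; since $T$ is a connected subgraph of $X^{(1)}$, one can find a non-backtracking edge-path in $T$ from $p$ to $q$. By the above, this path is a local, hence global, geodesic of $X$, and by uniqueness it must be the unique CAT(0) geodesic $[p,q]$ of $X$. In particular $[p,q] \subset T$, proving that $T$ is convex.

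For the tree property, I would argue by contradiction: if $T$ contained an embedded loop, pick a simple cycle $\gamma$ in $T$. Walking along $\gamma$ gives a non-backtracking edge-loop whose consecutive edges all make angle $\pi$, hence a locally geodesic loop in $X$. Such a loop cannot exist in a CAT(0) space, so $T$ has no cycles. Being a connected graph with no cycle, $T$ is a tree.

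The only delicate point is verifying that the angle-$\pi$ condition genuinely gives the local geodesic condition at every vertex of the edge-path, including vertices of valence greater than two inside $T$ (several edges of $T$ can meet at a single vertex). This is not an obstacle, though: the local geodesic condition at an interior vertex of a path only involves the two specific edges used by the path, and the previous lemma applies to any such pair regardless of what other edges of $T$ might sit at that vertex. Once this is observed, the argument is essentially a direct application of standard CAT(0) facts.
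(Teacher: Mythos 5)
Your argument is correct and is exactly the route the paper intends: it simply spells out the ``elementary CAT(0) facts'' (angle $\pi$ at each junction implies local geodesic, local geodesics in CAT(0) spaces are global and unique, and no closed local geodesic can exist) that the paper invokes without detail after the angle-$\pi$ lemma. Nothing to add.
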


It turns out that the collection of tree-walls of $X$ is in bijection with the set of parallelism classes of hyperplanes in the CAT(0) square complex $X'$. The desired bijection is given by the following lemma. We recall that a \emph{combinatorial hyperplane} of a given hyperplane $\hat{h}$ is one of the two connected components of $N( \hat{h} ) \backslash \backslash \hat{h}$, ie., the subcomplex obtained from the carrier $N ( \hat{h} )$ of $\hat{h}$ by removing the interiors of all the edges dual to $\hat{h}$.

\begin{lem}\label{lem:wall_hyperplane}
	Let $\hat{h}$ be a hyperplane of $X'$. Then exactly one of its combinatorial hyperplanes is a tree-wall. 
\end{lem}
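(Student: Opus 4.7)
The plan is to analyse the carrier $N(\hat{h})$ square by square, using that $X'$ is the cubical subdivision of $X$. Every square of $X'$ is of the form $[c_P,m_1,v,m_2]$, where $P$ is a polygon of $X$, $v$ is a vertex of $P$, $c_P$ is its centre, and $m_1,m_2$ are the midpoints of the two edges of $P$ incident to $v$. Consequently the edges of $X'$ come in two kinds: the \emph{half-edges} $[m,v]$ (each being half of some edge of $X$) and the \emph{radial edges} $[c_P,m]$ (which are not edges of $X^{(1)}$). Inspecting such a square shows that the two parallel classes of edges are $\{[c_P,m_1],[v,m_2]\}$ and $\{[m_1,v],[m_2,c_P]\}$, each mixing one half-edge with one radial edge. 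In particular, in every square of $N(\hat{h})$, one of the two combinatorial hyperplanes contains a half-edge and the other contains a radial edge. Globally, this forces one combinatorial hyperplane---the \emph{centre-side}---to contain radial edges of $X'$, so it is not a subgraph of $X^{(1)}$ and \emph{a fortiori} not a tree-wall. The content of the lemma is therefore that the remaining combinatorial hyperplane, the \emph{vertex-side}, is a tree-wall.

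Following $\hat{h}$ through two consecutive squares of $N(\hat{h})$ lying in a common polygon $P$, one checks that the two half-edges they contribute to the vertex-side share a midpoint and combine into a full edge of $X$, namely one of the sides of $P$. Moreover, when $\hat{h}$ crosses from $P$ into an adjacent polygon $P'$ through a shared boundary half-edge, a short computation in the $n$-gon $P$---using that the edges of $P$ carry consecutive labels modulo $n$, and that the two labels at a common vertex are shared by $P$ and $P'$---shows that the label of the edge of $X$ produced in $P'$ agrees with the label produced in $P$. It follows that every edge of $X$ contained in the vertex-side carries a common label $\ell\in\bbZ_n$, so the vertex-side is a connected subgraph of $X^{(1)}$ whose edges all have label $\ell$, and is thus contained in a unique tree-wall $T$ of label $\ell$.

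To see that the vertex-side actually equals $T$, the remaining task is to prove that at every vertex $v$ of $X$ it reaches, the vertex-side contains \emph{every} edge of $X$ of label $\ell$ incident to $v$. The key observation is that a single boundary half-edge of $X'$ lies in one square of $X'$ for each polygon of $X$ containing the underlying edge of $X$, and that, since a hyperplane is an equivalence class of parallel edges, $\hat{h}$ propagates simultaneously into all these squares. A short counting argument then matches the polygons at $v$ that $\hat{h}$ traverses via the entry half-edge with the edges of label $\ell$ at $v$ (the relevant counts can be read off from the stabiliser descriptions of Observation~\ref{obs:stab} together with the fact that each polygon has trivial stabiliser and thus every polygon containing a given edge is a distinct translate by its stabiliser): they turn out to be in bijection, so every edge of label $\ell$ at $v$ is realised. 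This gives maximality and finishes the proof. The main obstacle is the last step: one must carefully keep track of labels while following the branching behaviour of $\hat{h}$ at each shared boundary half-edge.
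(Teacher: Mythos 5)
Your argument is correct and is essentially the paper's proof written out in full: the paper disposes of the lemma in one sentence by invoking the fact that links of vertices of $X$ are complete bipartite graphs with respect to the labelling, and your carrier analysis --- in particular the final bijection between the polygons containing the entry edge at a vertex $v$ and the edges of label $\ell$ at $v$ --- is exactly that fact unpacked. Nothing further is needed.
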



\noindent

\begin{proof}[Proof of Lemma \ref{lem:wall_hyperplane}.]
This is a direct consequence of the fact that links of vertices of $X$ are complete bipartite graphs with respect to the labelling.
\end{proof}

\begin{definition}
Let $\hat{h}$ be a hyperplane of $X'$. The tree-wall that is a combinatorial hyperplane of $\hat{h}$ will be called the  tree-wall \textit{associated to} $\hat{h}$.
\end{definition}

\subsection{Stabilisers of tree-walls}

We gather here results about pointwise and global stabilisers of tree-walls and  their intersections.

\begin{lem}\label{lem:wallfix}
	Let $T$ be a tree-wall of $X$ and let $e \subset T$ an edge. Then $\mathrm{fix}(T)= \mathrm{stab}(e)$. 
\end{lem}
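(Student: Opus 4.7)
The inclusion $\mathrm{fix}(T) \subseteq \mathrm{stab}(e)$ is immediate since $e \subset T$. For the reverse inclusion, the strategy is to show that every edge $e' \subset T$ has pointwise stabiliser equal to $\mathrm{stab}(e)$; combined with $\mathrm{fix}(T) = \bigcap_{e' \subset T} \mathrm{fix}(e')$, this will give the equality.

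I would first observe that for any edge $f$ of $X$, pointwise and setwise stabilisers coincide: $\mathrm{stab}(f) = \mathrm{fix}(f)$. Indeed, the stabiliser of $f$ equals the intersection of the two vertex stabilisers of its endpoints, so any element stabilising $f$ fixes both endpoints; by cellularity of the action it then fixes $f$ pointwise.

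The key step is to show that two edges $e_1, e_2 \subset T$ sharing a vertex $v$ have the same stabiliser. By Observation \ref{obs:stab}, we may write $\mathrm{stab}(v) = g(G_j \times G_{j+1})g^{-1}$ for some $g \in C_n\mathcal{G}$ and $j \in \bbZ_n$; since each $\mathrm{stab}(e_k)$ is a conjugate of $G_i$ contained in $\mathrm{stab}(v)$, the common label $i$ of $e_1, e_2$ must lie in $\{j, j+1\}$. Using the explicit description of the edges of $X$ as pairs of cosets of the form $g'(G_\ell \times G_{\ell+1})$ and $g'(G_{\ell+1} \times G_{\ell+2})$, a direct computation shows that $\mathrm{stab}(e_k) = g h_k G_i (g h_k)^{-1}$ for some $h_k \in G_j \times G_{j+1}$. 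Crucially, $G_i$ is a direct factor of $G_j \times G_{j+1}$, hence normal in the product, so $h_k G_i h_k^{-1} = G_i$ and therefore $\mathrm{stab}(e_k) = g G_i g^{-1}$, independent of $k$.

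To conclude: since $T$ is connected (in fact a subtree of $X$ by Corollary \ref{cor:subtree}), any edge $e' \subset T$ can be joined to $e$ by an edge path in $T$; successive edges along this path share a vertex and the common label $i$, so the previous step gives $\mathrm{stab}(e') = \mathrm{stab}(e)$ by induction. Combined with the first observation this yields $\mathrm{fix}(T) = \mathrm{stab}(e)$, as desired. The main subtlety lies in the direct-factor normality argument: without it, the stabiliser could a priori vary when moving from one edge of $T$ to an adjacent one, and the tree-wall structure alone would not be enough to propagate it.
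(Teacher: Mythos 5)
Your proposal is correct and follows essentially the same route as the paper: the heart of both arguments is that the stabiliser of an edge of $T$ is a direct factor, hence normal, in the stabiliser of an incident vertex, so that the edge stabiliser does not change when passing to an adjacent edge of $T$, and one then propagates along the tree by induction. The only cosmetic difference is that you compute the two edge stabilisers explicitly via coset representatives, whereas the paper conjugates one edge to the other by an element of the vertex stabiliser before invoking the same normality; your explicit remark that setwise and pointwise edge stabilisers agree is a detail the paper leaves implicit.
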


\begin{proof}
	The inclusion $\mathrm{fix}(T)\subset \mathrm{stab}(e)$ being clear, we prove the reverse inclusion. Let $e_1, e_2$ be two adjacent edges of $T$, and let $v$ be their common vertex. There exists an element $g \in \mathrm{stab}(v)$ such that $ge_1 = e_2$. But since $\mathrm{stab}(e_1)$ is a direct factor (hence a normal subgroup) of $\mathrm{stab}(v)$, we have $\mathrm{stab}(e_1) = \mathrm{stab}(e_2)$. A proof by induction now implies that $\mathrm{stab}(e) = \mathrm{stab}(e')$ for every other edge $e'$ of $T$, and it follows that  $\mathrm{fix}(T)\supset \mathrm{stab}(e)$. 
\end{proof}

\begin{lem}\label{lem:stab_treewall}
Let $T$ be a tree-wall of $X$ with label $i \in \bbZ_n$. Then $\mathrm{stab}(T)$ is conjugated to $\langle G_{i-1}, G_i, G_{i+1}\rangle$.
\end{lem}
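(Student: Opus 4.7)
The plan is to identify $\mathrm{stab}(T)$ with $K := g\langle G_{i-1}, G_i, G_{i+1}\rangle g^{-1}$ for a suitable $g \in C_n\cG$. Concretely, by the coset description of $X$, any edge $e \subset T$ of label $i$ has endpoints of the form $v_1 = g(G_{i-1} \times G_i)$ and $v_2 = g(G_i \times G_{i+1})$, and then $\mathrm{stab}(e) = gG_ig^{-1}$, which equals $\mathrm{fix}(T)$ by Lemma \ref{lem:wallfix}.

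For the inclusion $K \subset \mathrm{stab}(T)$, I note that $K$ is generated by the two vertex stabilisers $\mathrm{stab}(v_1) = g(G_{i-1} \times G_i)g^{-1}$ and $\mathrm{stab}(v_2) = g(G_i \times G_{i+1})g^{-1}$, since the conjugating elements $g G_{i-1} g^{-1}, g G_i g^{-1}, g G_{i+1} g^{-1}$ are collectively covered by these two subgroups. It thus suffices to check that each $\mathrm{stab}(v_j)$ preserves $T$ setwise. An element $h \in \mathrm{stab}(v_j)$ fixes $v_j$ and preserves the labels of edges, so it permutes the label-$i$ edges issuing from $v_j$; all such edges lie in $T$ by maximality of tree-walls, so $h \cdot T$ is a tree-wall sharing at least one edge with $T$, forcing $h \cdot T = T$.

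For the reverse inclusion $\mathrm{stab}(T) \subset K$: any $h \in \mathrm{stab}(T)$ conjugates the pointwise stabiliser $\mathrm{fix}(T)$ to itself, because for $k \in \mathrm{fix}(T)$ and $x \in T$ one has $hkh^{-1}\cdot x = h(h^{-1}x) = x$ as $h^{-1}x \in T$. Using Lemma \ref{lem:wallfix} to identify $\mathrm{fix}(T) = gG_ig^{-1}$, this means $g^{-1}hg$ normalises $G_i$ in $C_n\cG$. Applying Lemma \ref{normalizer} with $\Lambda = \{i\}$, the induced subgraph $\Xi$ consists of $i$ together with its two neighbours $i-1, i+1$ in the cycle $C_n$, whence the normaliser of $G_i$ is exactly $\langle G_{i-1}, G_i, G_{i+1}\rangle$. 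Conjugating back yields $h \in K$.

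I do not expect any serious obstacle: the argument simply assembles Lemma \ref{lem:wallfix} (identifying $\mathrm{fix}(T)$), Lemma \ref{normalizer} (computing the normaliser of $G_i$), and the geometric observation that vertex stabilisers of vertices on $T$ preserve $T$. The mildest subtlety is ensuring that the two vertex stabilisers really do generate all of $K$, which is immediate from the fact that $G_{i-1} \times G_i$ and $G_i \times G_{i+1}$ together contain each of the three factor groups $G_{i-1}, G_i, G_{i+1}$.
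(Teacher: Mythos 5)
Your proof is correct, and the interesting half — the inclusion $\mathrm{stab}(T)\subset K$ — is argued by a genuinely different route than the paper's. The paper proceeds geometrically: it shows that $\mathrm{stab}(u)$ and $\mathrm{stab}(v)$ each act transitively on the edges of $T$ through $u$, resp. $v$, deduces by induction that $\langle \mathrm{stab}(u),\mathrm{stab}(v)\rangle$ acts transitively on all edges of $T$, and then for $g\in\mathrm{stab}(T)$ finds $h$ in that subgroup with $ge=he$, so that $gh^{-1}\in\mathrm{stab}(e)=G_i$. You instead observe that $\mathrm{stab}(T)$ normalises $\mathrm{fix}(T)$, identify $\mathrm{fix}(T)$ with $gG_ig^{-1}$ via Lemma \ref{lem:wallfix}, and invoke Lemma \ref{normalizer} with $\Lambda=\{i\}$ to conclude that the normaliser of $G_i$ is exactly $\langle G_{i-1},G_i,G_{i+1}\rangle$. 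Your argument is shorter and avoids the induction, at the cost of outsourcing the work to the normaliser computation of \cite{AM} (which the paper does import anyway, for Section 3, so no new dependency is created); note that it also crucially uses the standing convention that the $G_i$ are non-trivial, so that $\mathrm{fix}(T)$ is a genuine non-trivial subgroup whose normaliser is proper. The paper's transitivity argument has the side benefit of exhibiting explicitly how $\mathrm{stab}(T)$ moves the edges of the tree-wall, which is in the same spirit as arguments used elsewhere (e.g. in Lemma \ref{lem:vertadjinwall}). Your treatment of the easy inclusion $K\subset\mathrm{stab}(T)$ matches the paper's, just with more detail about why vertex stabilisers preserve $T$.
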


\begin{proof}
First notice that every element of the subgroups $G_{i-1}$, $G_i$ and $G_{i+1}$ stabilises $T$, hence  $\langle G_{i-1}, G_i, G_{i+1}\rangle \subset \mathrm{stab}(T)$. We now prove the reverse inclusion. 
	
Up to the action of the group, let us assume that $T$ coincides with the tree-wall $T_e$ which contains the edge $e$ of $X$ corresponding to the coset $G_i$. Let $u, v$ be the two vertices of $e$, such that $\mathrm{stab}(u) = \langle G_{i-1}, G_i \rangle $ and $\mathrm{stab}(v) = \langle G_{i}, G_{i+1}\rangle $. Note that $\mathrm{stab}(u)$ acts transitively on the edges of $T_e$ containing $u$, and $\mathrm{stab}(v)$ acts transitively on the edges of $T_e$ containing $v$. An induction now implies that $\langle \mathrm{stab}(u), \mathrm{stab}(v)\rangle $ acts transitively on the edges of $T_e$.

Let $g\in \mathrm{stab}(T_e)$. There exists an element of $h\in \langle \mathrm{stab}(u), \mathrm{stab}(v)\rangle  =  \langle G_{i-1}, G_i, G_{i+1}\rangle$ such that $ge = he$. We thus have $gh^{-1} \in \mathrm{stab}(e) = G_i$, hence $g \in \langle G_{i-1}, G_i, G_{i+1}\rangle$.
\end{proof}

We will now focus on stabilisers of pairs of tree-walls. In order to formulate our result, we first introduce the following construction:

\begin{definition}[Crossing graph]
The \textit{crossing graph} of $X$ is the simplicial graph defined as follows: 
\begin{itemize}
\item vertices correspond to tree-walls of $X$,
\item two vertices are joined by an edge if and only if the corresponding tree-walls intersect.
\end{itemize}
We will denote by $\Delta$ the induced graph metric on the intersection graph.
\end{definition}

\begin{prop}\label{prop:interstabwall}
Let $T, T' \subset X$ be two tree-walls. Then $\mathrm{stab}(T) \cap \mathrm{stab}(T')$ is:
\begin{itemize}
	\item trivial if $\Delta(T, T') \geq 3$;
	\item  conjugated to some $G_i$, for some $i \in \bbZ_n$, if $\Delta(T,T')=2$,
	\item  conjugated to $\langle G_i,G_{i+1} \rangle$, for some $i \in \bbZ_n$, if $\Delta(T,T')=1$.
\end{itemize}
\end{prop}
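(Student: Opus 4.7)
I would treat the three cases separately, using Lemma~\ref{lem:stab_treewall} throughout to identify $\mathrm{stab}(T)$ with a conjugate of the path parabolic $P_i := \langle G_{i-1}, G_i, G_{i+1}\rangle$, where $i$ is the label of $T$. The hypothesis $n\geq 5$ will be used twice: it ensures via Lemma~\ref{normalizer} that $P_i$ is self-normalising (since no vertex of $C_n$ is adjacent to all of $\{i-1,i,i+1\}$), and it gives $X$ the $C(n)$--$T(4)$ small cancellation geometry required for the disc-diagram arguments below.

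\emph{Case $\Delta(T,T')=1$.} Since each edge of $X$ carries a unique label, two tree-walls with distinct labels cannot share an edge; combined with the convexity of tree-walls as subtrees (Corollary~\ref{cor:subtree}), this forces $T\cap T'=\{v\}$ for a single vertex $v$. I then show $\mathrm{stab}(T)\cap \mathrm{stab}(T')=\mathrm{stab}(v)$: the forward inclusion is immediate since the intersection must preserve $\{v\}=T\cap T'$, while for the reverse inclusion I note that any element of $\mathrm{stab}(v)$ permutes the edges of $X$ at $v$ preserving their labels (the label of an edge being a conjugacy invariant of its stabiliser), and since there is exactly one tree-wall of each label through $v$, such an element preserves both $T$ and $T'$. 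Observation~\ref{obs:stab} identifies $\mathrm{stab}(v)$ with a conjugate of $\langle G_i, G_{i+1}\rangle$.

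\emph{Case $\Delta(T,T')=2$.} Fix a tree-wall $T''$ at crossing-distance $1$ from both $T$ and $T'$, meeting them at distinct vertices $v$ and $w$. The crux is to establish the \emph{uniqueness} of such a bridge $T''$. Suppose two distinct bridges $T''_1\ne T''_2$ existed, meeting $T$ at $v_1\ne v_2$ and $T'$ at $w_1\ne w_2$; concatenating the four geodesic segments $[v_1,v_2]\subset T$, $[v_2,w_2]\subset T''_2$, $[w_2,w_1]\subset T'$, $[w_1,v_1]\subset T''_1$ yields a non-backtracking closed loop in $X^{(1)}$. I fill it by a reduced disc diagram and apply the Combinatorial Gau\ss-Bonnet Theorem (Theorem~\ref{GaussBonnet}) together with the $C(n)$--$T(4)$ condition to derive a contradiction on the total curvature. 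Given uniqueness, any $g\in \mathrm{stab}(T)\cap \mathrm{stab}(T')$ must satisfy $gT''=T''$ (as $gT''$ is another such bridge) and hence must fix each of $v,w$ (they cannot be swapped, as they lie in different tree-walls). It follows that $g\in \mathrm{stab}(v)\cap \mathrm{stab}(w)$, and an argument analogous to Lemma~\ref{lem:wallfix} (using that a vertex-stabiliser of $T''$ acts transitively on its incident $T''$-edges) shows this intersection coincides with $\mathrm{stab}(e)$ for any edge $e$ on the $T''$-geodesic $[v,w]$, which by Observation~\ref{obs:stab} is conjugate to $G_j$ where $j$ is the label of $T''$. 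The reverse inclusion holds since $\mathrm{stab}(e)$ fixes $v$ and $w$ and hence preserves the tree-walls through them.

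\emph{Case $\Delta(T,T')\geq 3$.} Let $T=T_0, T_1, \ldots, T_k=T'$ be a shortest path in the crossing graph with $k\geq 3$, and write $v_\ell:=T_{\ell-1}\cap T_\ell$. An iteration of the small cancellation argument of Case~2 (ruling out a second crossing-graph geodesic of the same length via Gau\ss-Bonnet) shows that this path is the unique minimum-length sequence of bridges from $T$ to $T'$, so any $g\in \mathrm{stab}(T)\cap \mathrm{stab}(T')$ must stabilise each $T_\ell$ and fix each crossing vertex $v_\ell$. In particular $g\in \mathrm{stab}(v_1)\cap \mathrm{stab}(v_2)\cap \mathrm{stab}(v_3)$, and Case~2 then places $g$ in $\mathrm{stab}(e_1)\cap \mathrm{stab}(e_2)$ for edges $e_1\subset T_1$ and $e_2\subset T_2$ of \emph{distinct} labels. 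Since $G_i\cap G_j=\{1\}$ for $i\ne j$ in the graph product $C_n\cG$, this forces $g=1$.

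The principal obstacle is the uniqueness statement in Case~2 (and its extension in Case~1): ruling out ``parallel bridges'' connecting two disjoint tree-walls requires a careful disc-diagram argument combining the Combinatorial Gau\ss-Bonnet Theorem with the $C(n)$--$T(4)$ geometry of $X$, and I expect this combinatorial analysis to occupy the bulk of the proof.
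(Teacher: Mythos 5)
Your Cases 1 and 2 follow the paper's route closely: the single-vertex intersection plus label-preservation for $\Delta=1$, and for $\Delta=2$ a Gau\ss--Bonnet disc-diagram argument establishing uniqueness of the bridging tree-wall followed by a reduction to $\mathrm{fix}$ of that tree-wall via Lemma \ref{lem:wallfix}. These are essentially the paper's Corollary \ref{cor:stab1} and Lemma \ref{lem:Delta_2}/Corollary \ref{cor:stab2}.

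Case 3 is where there is a genuine gap. You rest the whole case on the claim that the minimum-length path $T=T_0,T_1,\dots,T_k=T'$ in the crossing graph is \emph{unique}, justified only as ``an iteration of the small cancellation argument of Case 2.'' This iteration is not routine and it is not clear it closes. If two competing crossing-graph geodesics of length $k$ are compared, the resulting loop has on the order of $2k$ corners, each contributing up to $\pi/2$ of positive curvature, i.e.\ roughly $k\pi$ in total; to contradict Gau\ss--Bonnet you must exhibit more than $2k-4$ faces of the filling diagram, and the lower bound on the number of faces you can extract from the boundary (each face of a reduced diagram contains at most one edge of any given tree-wall, hence only a bounded number of boundary edges) grows like a fixed fraction of the boundary length, which need not exceed $2k-4$ when the tree-wall segments are short. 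So for large $k$ the curvature bookkeeping does not obviously produce a contradiction, and the uniqueness statement itself is far from evident. The paper avoids this entirely: it proves only that the set $\mathrm{Min}_T(T,T')$ of closest points is \emph{bounded} (Lemma \ref{lem:min_bounded}, a quadrilateral Gau\ss--Bonnet argument with a fixed number of corners), then uses CAT(0) circumcentres to produce a fixed geodesic $\gamma$ between $T$ and $T'$, and kills the stabiliser either on the interior of a polygon or on two consecutive edges of $\gamma$ meeting at angle $\pi/2$. That argument is strictly weaker in what it establishes about the geometry, but it is sufficient and robust; yours requires a stronger combinatorial fact that you have not proved and that may fail.

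A secondary, fixable slip at the end of your Case 3: you conclude from ``$G_i\cap G_j=\{1\}$ for $i\neq j$,'' but the two edge-stabilisers you produce are conjugates of $G_{j_1}$ and $G_{j_2}$ by a priori different elements. To make this clean, take $e_1\subset T_1$ and $e_2\subset T_2$ to be the edges incident to the common crossing vertex $v_2=T_1\cap T_2$; their stabilisers are then the two direct factors of $\mathrm{stab}(v_2)\cong g(G_i\times G_{i+1})g^{-1}$ and intersect trivially.
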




We will split the proof in three separate cases. We start with tree-walls at distance $1$. Notice that since tree-walls are convex subtrees of $X$ by Corollary \ref{cor:subtree}, we immediately have the following:

\begin{lem}
Two  tree-walls at $\Delta$-distance $1$ intersect in a single vertex. \qed
\end{lem}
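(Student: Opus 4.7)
My plan is to combine two ingredients: distinct tree-walls cannot share an edge, and, by Corollary \ref{cor:subtree}, tree-walls are CAT(0)-convex subtrees. Since $\Delta(T,T')=1$, by definition $T\cap T'\neq \emptyset$, so there is something to rule out (at least two vertices in the intersection).

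First I would verify that two distinct tree-walls cannot share any edge. An edge $e$ of $X$ has a unique label $i\in\bbZ_n$, and every edge of a tree-wall containing $e$ has the same label. Hence if $T$ and $T'$ shared an edge, they would both have label $i$; but then $T\cup T'$ would be a connected subgraph of $X^{(1)}$ all of whose edges have label $i$, so maximality in the definition of a tree-wall would force $T=T\cup T'=T'$, contradicting $T\neq T'$. Consequently $T\cap T'$ is a non-empty set consisting only of vertices.

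The key step is then to argue there is exactly one such vertex. By Corollary \ref{cor:subtree}, both $T$ and $T'$ are convex with respect to the CAT(0) metric on $X$, so $T\cap T'$ is itself CAT(0)-convex. If it contained two distinct vertices $u$ and $v$, the (unique) CAT(0) geodesic from $u$ to $v$ in $X$ would lie in both $T$ and $T'$. But each tree-wall is a subtree of $X$, so the CAT(0) geodesic between its vertices $u$ and $v$ coincides with the unique combinatorial path from $u$ to $v$ in the tree, and this path traverses at least one edge. That edge would then lie in $T\cap T'$, contradicting the conclusion of the previous paragraph. Therefore $T\cap T'$ is reduced to a single vertex.

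I do not foresee any real obstacle here: the argument is essentially formal once Corollary \ref{cor:subtree} is in hand. The only point worth double-checking is the identification of the ambient CAT(0) geodesic with the combinatorial path inside a tree-wall, which is immediate from the fact that a convex subtree is isometrically embedded in $X$ and is itself a one-dimensional CAT(0) space, i.e.\ an $\bbR$-tree.
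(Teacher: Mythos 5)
Your argument is correct and is exactly the route the paper intends: the lemma is stated there as an immediate consequence of Corollary \ref{cor:subtree} (tree-walls are convex subtrees), and your write-up simply supplies the two routine details the authors leave implicit, namely that distinct tree-walls cannot share an edge and that convexity then forces the intersection to be a single vertex.
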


\begin{cor}\label{cor:stab1}
Let $T, T'$ be two tree-walls of $X$ at $\Delta$-distance $1$. Then $\mathrm{stab}(T) \cap \mathrm{stab}(T')$  is  conjugated to some $\langle G_i,G_{i+1} \rangle$, for some $i \in \bbZ_n$.
\end{cor}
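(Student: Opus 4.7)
My plan is to establish the two inclusions separately. The inclusion $\mathrm{stab}(T) \cap \mathrm{stab}(T') \subseteq \langle G_i, G_{i+1}\rangle$ (up to conjugation) is immediate: by the preceding lemma, $T \cap T'$ consists of a single vertex $v$, so any element of $\mathrm{stab}(T) \cap \mathrm{stab}(T')$ must fix $v$, and by Observation \ref{obs:stab} the stabiliser of $v$ is conjugate to some $\langle G_i, G_{i+1}\rangle$. So the real content is the reverse inclusion, which amounts to showing that $\mathrm{stab}(v) \subseteq \mathrm{stab}(T)$ (and the symmetric statement for $T'$).

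The key observation I would isolate first is that elements of $\mathrm{stab}(v)$ preserve the labels of the edges of $X$ incident to $v$. This follows from the direct product structure of vertex stabilisers: up to conjugation, $\mathrm{stab}(v) = G_i \times G_{i+1}$, and the stabiliser of any edge at $v$ is one of the direct factors $G_i$ or $G_{i+1}$, hence a normal subgroup of $\mathrm{stab}(v)$. Consequently, for any $g \in \mathrm{stab}(v)$ and any edge $e$ incident to $v$, we have $\mathrm{stab}(ge) = g\,\mathrm{stab}(e)\,g^{-1} = \mathrm{stab}(e)$, so $ge$ carries the same label as $e$. Note also that, since at most two labels appear at $v$, and any two tree-walls through $v$ must have distinct labels, the tree-walls $T$ and $T'$ are precisely the two tree-walls through $v$, carrying the two labels $i$ and $i+1$ respectively.

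From there the proof closes quickly. Let $j \in \{i, i+1\}$ denote the label of $T$. Every label-$j$ edge incident to $v$ belongs to $T$: indeed, such edges are pairwise connected through $v$, and $T$ is the unique maximal connected subgraph of label-$j$ edges containing any one of them. So $g \in \mathrm{stab}(v)$ permutes the label-$j$ edges at $v$, and $g(T)$ is therefore a tree-wall of label $j$ that shares at least one edge with $T$; by the maximality clause in the definition of a tree-wall, $g(T) = T$. The identical argument gives $g(T') = T'$, and the inclusion $\mathrm{stab}(v) \subseteq \mathrm{stab}(T) \cap \mathrm{stab}(T')$ follows. I do not anticipate a genuine obstacle here, as everything follows transparently from the direct product structure of vertex stabilisers together with the maximality built into the definition of tree-walls; the only point requiring a moment's care is the label-preservation claim.
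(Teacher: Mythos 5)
Your proof is correct and takes essentially the same route as the paper's (one-sentence) argument: reduce to the stabiliser of the unique intersection vertex $v=T\cap T'$, using that the action preserves edge labels so that fixing $v$ forces stabilising both tree-walls through $v$, and then invoke Observation \ref{obs:stab}. Your justification of label-preservation via normality of the direct factors of $\mathrm{stab}(v)$ is a slightly more explicit substitute for the paper's remark that edges with different labels lie in different $C_n\mathcal{G}$-orbits, but the structure of the argument is identical.
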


\begin{proof}
Since two edges with different labels are not in the same $C_n\cG$-orbit, it follows that a group element is in  $\mathrm{stab}(T) \cap \mathrm{stab}(T')$ if and only if it stabilises their unique intersection point $T \cap T'$.
\end{proof}

We now move to the case of tree-walls at distance $2$. We start by the following lemma:

\begin{lem}\label{lem:Delta_2}
Let $T, T'$ be two tree-walls of $X$ at $\Delta$-distance $2$. Then there exists a unique combinatorial path of minimal length between $T$ and $T'$, contained in the (unique) tree-wall crossing $T$ and $T'$.
\end{lem}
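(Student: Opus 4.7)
The plan is to combine two ingredients: the fact (from the preceding subsection) that tree-walls are convex subtrees of $X$, and the combinatorial Gauss--Bonnet theorem applied to reduced disc diagrams in the $C(n)$-$T(4)$ complex $X$, using $n \geq 5$.

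First, I would construct a candidate minimum path. Since $\Delta(T, T') = 2$, pick any tree-wall $T''$ crossing both $T$ and $T'$. By the preceding lemma on pairs at $\Delta$-distance $1$, the intersections $T \cap T'' = \{p\}$ and $T' \cap T'' = \{q\}$ are singletons, and $p \neq q$ since $T \cap T' = \emptyset$. Because $T''$ is a convex subtree, it contains a unique geodesic $\gamma_0$ from $p$ to $q$, giving a combinatorial path from $T$ to $T'$ of length $d_{T''}(p,q)$ entirely supported in $T''$.

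The heart of the argument is the claim: any combinatorial path $\gamma$ from $T$ to $T'$ of length at most $|\gamma_0|$ must coincide with $\gamma_0$. Granted this, uniqueness of both the minimal path and of $T''$ follows immediately: if a second tree-wall $T_1''$ also crossed both $T$ and $T'$, its own candidate path would give a distinct minimum, contradicting the claim. To prove the claim, let $p' \in T$ and $q' \in T'$ be the endpoints of $\gamma$; after replacing $\gamma$ by a strictly shorter subpath if necessary, we may assume that $\gamma$ meets $T$ only at $p'$ and $T'$ only at $q'$. Form the loop $\Gamma = \beta_T \cdot \gamma \cdot \beta_{T'} \cdot \gamma_0^{-1}$, where $\beta_T \subset T$ is the unique path from $p$ to $p'$ and $\beta_{T'} \subset T'$ the unique path from $q'$ to $q$ (unique because $T$ and $T'$ are trees). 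If $\gamma \neq \gamma_0$, then $\Gamma$ is a nontrivial loop, and it is non-backtracking at each of the four corners $p, p', q, q'$ since the adjacent edges there belong to tree-walls with pairwise distinct labels. By Lyndon--van Kampen, $\Gamma$ bounds a reduced disc diagram $D \to X$.

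Applying the combinatorial Gauss--Bonnet theorem to $D$, internal faces satisfy $n_f \geq n \geq 5$ by the $C(n)$ condition, hence $\kappa(f) \leq -\pi/2$, and internal vertices satisfy $n_v \geq 4$ by $T(4)$, hence $\kappa(v) \leq 0$. Along each of the three tree-wall arcs $\beta_T, \beta_{T'}, \gamma_0^{-1}$ of $\partial D$, an interior boundary vertex has its two consecutive edges making angle $\pi$, so it is contained in at least two $2$-cells of $D$ and contributes $\kappa(v) \leq 0$. Only the four corners, the vertices interior to $\gamma$, and boundary faces can contribute positive curvature; a careful accounting, using $C(n)$ together with the cyclic labelling around every $n$-gon of $X$ to control boundary faces adjacent to $\gamma$, shows that the total curvature falls strictly short of $2\pi$, contradicting Gauss--Bonnet. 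Hence $\gamma = \gamma_0$. The main obstacle will be precisely this boundary bookkeeping: handling boundary faces adjacent to $\gamma$, whose boundary edges may carry several distinct labels, requires exploiting that consecutive edges around any $n$-gon of $X$ have labels differing by $1$ in $\mathbb{Z}_n$, so such a boundary face must have a controlled number of edges on $\partial D$, which combined with $C(n)$ absorbs the positive corner contributions.
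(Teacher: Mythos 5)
Your overall strategy is the same as the paper's: take the arc $\gamma_0$ of the crossing tree-wall $T''$ as the candidate geodesic, suppose a competitor $\gamma$ with $|\gamma|\leq|\gamma_0|$ exists, close up a loop using arcs inside the trees $T$ and $T'$, fill it with a reduced disc diagram, and rule it out by combinatorial Gau\ss--Bonnet. However, the decisive step --- the actual curvature count --- is exactly what you defer to ``a careful accounting'' and yourself flag as ``the main obstacle,'' and the mechanism you sketch for it is not the right one. In the McCammond--Wise formulation of Gau\ss--Bonnet used here, a $2$-cell contributes $2\pi-n_f\frac{\pi}{2}\leq-\frac{\pi}{2}$ since every polygon of $X$ is an $n$-gon with $n\geq 5$; faces never contribute positive curvature, so there are no ``positively curved boundary faces'' to control via the cyclic labelling. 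The two ingredients you are missing are: (i) a \emph{lower bound on the number of faces} of $D$, namely at least $|\gamma_0|$, which comes from the fact that two consecutive edges of $T''$ make an angle $\pi$ and polygons are combinatorially convex, so no two edges of the arc $\gamma_0\subset\partial D$ lie in a common $2$-cell of $D$; and (ii) the resulting inequality: positive contributions are at most $\frac{\pi}{2}$ from each of the four corners and each of the at most $|\gamma|-1$ interior vertices of $\gamma$, giving total curvature at most $(|\gamma|+3)\frac{\pi}{2}-|\gamma_0|\frac{\pi}{2}\leq\frac{3\pi}{2}<2\pi$ since $|\gamma|\leq|\gamma_0|$. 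Without (i) there is no source of the negative curvature that beats the corner contributions, and the proof does not close.

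A secondary issue: you only check non-backtracking at the four corners and assert the loop is ``nontrivial,'' but your loop $\beta_T\cdot\gamma\cdot\beta_{T'}\cdot\gamma_0^{-1}$ need not be embedded --- $\gamma$ may meet $\gamma_0$ (or re-enter $T$ or $T'$ at interior points of $\beta_T$, $\beta_{T'}$) --- and for a non-embedded boundary the disc diagram can be degenerate, with boundary vertices traversed several times, which invalidates the clean per-vertex curvature bounds. The paper handles this by explicitly cutting down to an \emph{embedded} loop, splitting into the cases $\gamma\cap\gamma_0\neq\varnothing$ (use subsegments $\gamma'$, $\gamma_0'$ up to the first intersection) and $\gamma\cap\gamma_0=\varnothing$ (use the full quadrilateral). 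You would need to add both this reduction and the face-count (i) to turn your outline into a proof.
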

 
\begin{proof}
Let $T_0$ be a tree-wall crossing both $T$ and $T'$, and let $\gamma_0$ be the portion of $T_0$ between $T$ and $T'$. By contradiction, suppose that there exists a combinatorial geodesics $\gamma$ from $T$ to $T'$ distinct from $\gamma_0$, and such that $|\gamma| \leq |\gamma_0|$. Without loss of generality, we assume that $\gamma$ intersects $T$ in a single vertex. First notice that since $\gamma_0$ is the unique CAT(0) geodesic between $T_0 \cap T$ and $T_0 \cap T'$, $\gamma_0$ and $\gamma$ cannot have the same endpoints. Without loss of generality, suppose that $\gamma \cap T \neq \gamma_0 \cap T$, and let $\tau$ be the unique geodesic of $T$ between  $\gamma \cap T$ and $ \gamma_0 \cap T$. There are two cases to consider here: If $\gamma \cap \gamma_0\neq \varnothing$, one then constructs an embedded loop of $X$ as the concatenation of a subsegment $\gamma_0'$ of $\gamma_0$, $\tau$, and a subsegment $\gamma'$ of $\gamma$. If $\gamma \cap \gamma_0 = \varnothing$, one then constructs an embedded loop of $X$ as the concatenation of $\gamma_0$, $\tau$, $\gamma$, and the unique geodesic $\tau'$ of $T'$ between $\gamma \cap T'$ and $\gamma_0 \cap T'$. In the latter case, we set $\gamma' := \gamma$ and $\gamma_0' := \gamma_0$: This will allow us to derive a contradiction for both cases in a uniform manner. 

Since $X$ is simply connected, we choose a reduced disc diagram filling the aforementioned loop. Note that since two consecutive edges of a tree-wall make an angle $\pi$, the only boundary vertices of the disc diagram contributing to positive curvature (exactly $\pi/2$) are the intersection points $T \cap T_0$, $T' \cap T_0$,  and possibly vertices of $\gamma'$: at most $|\gamma'|+1$ other vertices of $\gamma'$. This leads to a maximal contribution of at most $2\pi + (|\gamma'|-1)\frac{\pi}{2}$. The interior vertices contribute to non-positive curvature, and each polygon contributes to a negative curvature bounded above by $-\pi/2$ (since $n \geq 5$). But since two consecutive edges of $T_0$  make an angle $\pi$ and polygons of $X$ are combinatorially convex, no two edges of $T_0$ belong to the same polygon, hence the disc diagram contains at least $|\gamma_0|$ polygons, and thus there the negative curvature contribution is at least $-|\gamma_0|\frac{\pi}{2}$. Since $|\gamma| \leq |\gamma_0 |$ by assumption, we get that the total sum of curvatures is bounded above by $2\pi - \frac{\pi}{2}$, contradicting the Gau\ss--Bonnet Theorem.
\end{proof}

\begin{cor}\label{cor:stab2}
Let $T, T'$ be two tree-walls of $X$ at $\Delta$-distance $2$. Then $\mathrm{stab}(T) \cap \mathrm{stab}(T')$  is  conjugated to some $G_i$, for some $i \in \bbZ_n$.
\end{cor}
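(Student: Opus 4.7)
The plan is to identify $\mathrm{stab}(T) \cap \mathrm{stab}(T')$ with the fixator of a well-chosen tree-wall, and then invoke Lemma \ref{lem:wallfix} together with Observation \ref{obs:stab} to recognise it as a conjugate of some $G_i$. The key input is the uniqueness statement in Lemma \ref{lem:Delta_2}, which constrains any element of the intersection to preserve a distinguished tree-wall.

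Let $T_0$ be the unique tree-wall crossing both $T$ and $T'$, let $v_T := T \cap T_0$ and $v_{T'} := T' \cap T_0$ (each a single vertex, by the preceding lemma), and let $\gamma_0 \subset T_0$ denote the unique combinatorial geodesic from $v_T$ to $v_{T'}$ provided by Lemma \ref{lem:Delta_2}. I will prove the two inclusions $\mathrm{stab}(T) \cap \mathrm{stab}(T') \subseteq \mathrm{stab}(e) \subseteq \mathrm{stab}(T) \cap \mathrm{stab}(T')$ for any edge $e$ of $\gamma_0$; Observation \ref{obs:stab} then identifies this common group as a conjugate of $G_i$ where $i$ is the label of $T_0$.

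For the forward inclusion, I would take $g \in \mathrm{stab}(T) \cap \mathrm{stab}(T')$ and observe that $gT_0$ is again a tree-wall crossing both $T$ and $T'$; uniqueness in Lemma \ref{lem:Delta_2} then forces $gT_0 = T_0$, so $g$ fixes $v_T = T\cap T_0$ and $v_{T'} = T'\cap T_0$. Since $T_0$ is a convex subtree of $X$ (Corollary \ref{cor:subtree}), any combinatorial automorphism of $T_0$ fixing two of its vertices must fix the unique geodesic between them pointwise. Hence $g$ fixes $\gamma_0$ edge-wise, and Lemma \ref{lem:wallfix} yields $g \in \mathrm{stab}(e) = \mathrm{fix}(T_0)$.

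For the reverse inclusion, I would take $h \in \mathrm{fix}(T_0)$, so in particular $h$ fixes $v_T$. The local structure of the Davis complex recalled in Section \ref{Davis} (where $\mathrm{stab}(v_T)$ is conjugate to some $G_i \times G_{i+1}$) shows that exactly two tree-walls pass through $v_T$, carrying the two distinct labels $i$ and $i{+}1$. Since the $C_n\mathcal{G}$-action preserves the labelling, $h$ must preserve each of these two tree-walls setwise; in particular $h T = T$. The same argument at $v_{T'}$ gives $h T' = T'$, so $h \in \mathrm{stab}(T) \cap \mathrm{stab}(T')$. The main obstacle I anticipate is the reverse inclusion: the forward direction is essentially forced by the uniqueness in Lemma \ref{lem:Delta_2}, whereas the reverse direction requires pinning down the local combinatorics at a vertex — namely that exactly two tree-walls pass through each vertex and that they can be distinguished by their labels, which the group action respects.
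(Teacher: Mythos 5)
Your proposal is correct and follows essentially the same route as the paper: both identify $\mathrm{stab}(T)\cap\mathrm{stab}(T')$ with the pointwise stabiliser of the distinguished path $\gamma_0\subset T_0$ (equivalently, with $\mathrm{fix}(T_0)=\mathrm{stab}(e)$ via Lemma \ref{lem:wallfix}), using the uniqueness from Lemma \ref{lem:Delta_2} for the forward inclusion and the label-preserving nature of the action for the reverse one. Your write-up merely makes explicit the local observation that exactly two tree-walls, distinguished by their labels, pass through each vertex, which the paper compresses into the phrase ``since $C_n\mathcal{G}$ acts by label-preserving isomorphisms.''
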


\begin{proof}
Let $T_0$ be the tree-wall crossing $T$ and $T'$ (which is unique according to by Lemma \ref{lem:Delta_2}), and let $\gamma_0$ be the portion of $T_0$ between $T$ and $T'$. As a consequence of Lemma \ref{lem:Delta_2}, elements of $\mathrm{stab}(T) \cap \mathrm{stab}(T')$ pointwise stabilise $\gamma_0$. We actually have the reverse inclusion, namely $\mathrm{stab}(T) \cap \mathrm{stab}(T') = \mathrm{fix}(\gamma_0)$, since $C_n\cG$ acts by label-preserving isomorphisms. By Lemma \ref{lem:wallfix}, we have for any edge $e_0$ of $T_0$ the equality $\mathrm{fix}(\gamma_0) = \mathrm{fix}(T_0) = \mathrm{stab}(e_0)$, and such a stabiliser is conjugated to some $G_i$ for some $i \in \bbZ_n$. 
\end{proof}

We finally move to the case of tree-walls at $\Delta$-distance at least $3$.

\begin{lem}\label{lem:min_bounded}
Let $T,T'$ be two walls of $X$. The the subset $\mathrm{Min}_T(T,T')$ of points of $T$ realising the combinatorial distance between $T$ and $T'$ is bounded.
\end{lem}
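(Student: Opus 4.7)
My plan is to argue by contradiction using the combinatorial Gauss--Bonnet theorem, in the same spirit as the proof of Lemma~\ref{lem:Delta_2}. Let $d := \Delta(T, T')$ denote the combinatorial distance between $T$ and $T'$. Assuming for contradiction that $\mathrm{Min}_T(T, T')$ is unbounded, I would pick $p_1, p_2 \in \mathrm{Min}_T(T, T')$ with $L := d_T(p_1, p_2)$ arbitrarily large, together with combinatorial geodesics $\gamma_i$ of length $d$ from $p_i$ to a point $q_i \in T'$. Concatenating the $T$-geodesic from $p_1$ to $p_2$, the path $\gamma_2$, the $T'$-geodesic from $q_2$ to $q_1$, and the reversed path $\gamma_1^{-1}$ produces a null-homotopic loop in $X$, which I would fill with a reduced disc diagram $D \to X$ (passing, if necessary, to a non-degenerate sub-diagram by cutting along a first self-intersection, as in the case analysis of Lemma~\ref{lem:Delta_2}).

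The main step is then a curvature estimate on $D$, using that $X$ is $C(n)$--$T(4)$ with $n \geq 5$. Each polygon of $D$ contributes at most $-\pi/2$, and every interior vertex of $D$ contributes non-positive curvature. As in the proof of Lemma~\ref{lem:Delta_2}, a boundary vertex lying in the interior of either $[p_1, p_2]_T$ or $[q_2, q_1]_{T'}$ carries two boundary edges contained in a common tree-wall, hence making angle $\pi$; since distinct edges of a tree-wall share a common label whereas each polygon of $X$ has exactly one edge per label, no polygon of $X$ contains both edges, and such a vertex therefore lies in at least two $2$-cells of $D$ and contributes non-positive curvature. The only boundary vertices producing positive curvature are thus the four corners $p_1, p_2, q_1, q_2$ and the at most $2(d-1)$ interior vertices of $\gamma_1, \gamma_2$, each contributing at most $\pi/2$, for a total positive contribution of at most $2\pi + (d-1)\pi = (d+1)\pi$.

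The same label argument also controls the number of polygons: the $L$ edges of the $T$-segment lie on pairwise distinct $2$-cells of $D$, since otherwise a polygon of $X$ would contain two edges of $T$. Hence $D$ contains at least $L$ polygons, contributing a negative curvature of at most $-L\pi/2$. The combinatorial Gauss--Bonnet theorem then yields
\[
2\pi \;\leq\; (d+1)\pi - L\cdot \frac{\pi}{2},
\]
which forces $L \leq 2(d-1)$, contradicting the fact that $L$ could be taken arbitrarily large.

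The main obstacle I anticipate is the treatment of degenerate disc diagrams when the boundary loop fails to be embedded -- for instance when $\gamma_1$ and $\gamma_2$ share vertices or edges, or when $q_1 = q_2$. As already needed in the proof of Lemma~\ref{lem:Delta_2}, this should be handled by cutting the boundary loop along its first self-intersection and running the above curvature estimate on the resulting embedded sub-loop; the bounds only improve under such a simplification, so the same contradiction persists.
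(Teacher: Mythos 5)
Your proposal is correct and follows essentially the same route as the paper's proof: a Gau\ss--Bonnet estimate on a disc diagram bounded by the two tree-wall segments and two connecting geodesics, with positive curvature confined to the four corners and the interiors of the connecting geodesics, and at least one polygon per edge of the $T$-segment (the paper rules out two $T$-edges in one polygon via convexity of polygons and the angle-$\pi$ condition, where you use the equivalent observation that each polygon carries exactly one edge of each label). Your explicit treatment of degenerate boundary loops is slightly more careful than the paper, which simply asserts the loop is embedded by construction.
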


\begin{proof}
Denote by $k$ the combinatorial distance between $T$ and $T'$. We prove by contradiction that $\mathrm{Min}_T(T,T')$ has diameter at most $2k$. Suppose that there exist vertices $x, y \in T$ and $x', y' \in T'$ with $d(x, x') = k$, $d(y, y') = k$ and $d(x, y) \geq 2k+1$. Let $\gamma$ be the portion of $T$ between $x$ and $y$, let $\gamma'$ be the portion of $T'$ between $x'$ and $y'$, and let $\gamma_{x, x'}, \gamma_{y, y'}$ be combinatorial geodesics between $x$ and $x'$, $y$ and $y'$ respectively.  Since $\gamma_{x, x'} \cap \gamma_{y, y'} = \varnothing$ by construction, one constructs an embedded loop of $X$ as the concatenation of $\gamma$, $\gamma_{x, x'}$, $\gamma'$, $\gamma_{y, y'}$. 

Since $X$ is $T(4)$ and two consecutive edges of $T$ or $T'$ make an angle $ \pi$, there are at most $2k+2$ vertices contributing positive curvature, namely $\pi/2$. But since polygons are combinatorially convex and two consecutive edges of $T$ or $T'$ make an angle $ \pi$, there are at least $2k$ polygons contributing to a negative curvature bounded above by $-\pi/2$. The total sum of curvature is thus at most $\pi$, contradicting the Gau\ss--Bonnet Theorem.
\end{proof}

\begin{cor}\label{cor:stab3}
Let $T, T'$ be two tree-walls of $X$ at $\Delta$-distance at least $3$. Then $\mathrm{stab}(T) \cap \mathrm{stab}(T')$  is trivial.
\end{cor}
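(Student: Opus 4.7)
My plan is to assume by contradiction that some non-trivial $g$ lies in $\mathrm{stab}(T)\cap\mathrm{stab}(T')$, produce two distinct vertices fixed by $g$ (one on each tree-wall), and then analyse the unique CAT(0) geodesic joining them.

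For the first step, since $g$ stabilises both the convex subtrees $T$ and $T'$ (Corollary~\ref{cor:subtree}), it preserves the bounded subset $\mathrm{Min}_T(T,T')$ furnished by Lemma~\ref{lem:min_bounded}, and the usual fixed-point theorem for tree isometries yields a fixed point of $g$ on $T$. The action of $C_n\cG$ on $X$ is label-preserving, and the two endpoints of any edge of $T$ have stabilisers of the form $G_{i-1}\times G_i$ and $G_i\times G_{i+1}$ (up to conjugation), which are non-conjugate parabolic subgroups of $C_n\cG$; therefore $g$ cannot swap the endpoints of an edge of $T$ and must in fact fix an actual \emph{vertex} $v\in T$. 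The same argument applied to $T'$ gives a fixed vertex $v'\in T'$, and the hypothesis $\Delta(T,T')\geq 3$ forces $T\cap T'=\varnothing$, so that $v\neq v'$.

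For the second step, $g$ is an isometry of $X$ fixing both endpoints of the unique CAT(0) geodesic $\gamma=[v,v']$, hence fixes $\gamma$ pointwise. I would then split into two cases. If $\gamma$ enters the interior of some polygon $P$ of $X$, then $g$ fixes a point of $\mathrm{int}(P)$; since distinct polygons meet in at most an edge, $P$ is the only polygon containing that point in its interior, so $gP=P$, and the triviality of polygon stabilisers (Observation~\ref{obs:stab}) yields $g=1$, a contradiction.

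Otherwise $\gamma$ stays in $X^{(1)}$, and the CAT(0) geodesic condition forces consecutive edges of $\gamma$ to make angle $\pi$ at each interior vertex. The key technical input here is the shape of the link of a vertex of $X$ inside the Davis complex $X'$: it is a complete bipartite graph whose link-edges all have length $\pi/2$, so two edges at that vertex make angle $\pi$ if and only if they share a label. Propagating this observation along $\gamma$, all its edges share a common label and thus belong to a single tree-wall $T_0$ containing both $v$ and $v'$; then $T - T_0 - T'$ is a path of length at most $2$ in the crossing graph, contradicting $\Delta(T,T')\geq 3$. The most delicate point in the whole argument is this angle--label identification, which is what translates the CAT(0) straightness of $\gamma$ along $X^{(1)}$ into the combinatorial ``same-label'' condition defining tree-walls, and it is also what makes the first step produce a fixed vertex rather than merely a fixed point on an edge.
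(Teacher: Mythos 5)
Your proof is correct and follows the same skeleton as the paper's: both arguments use Lemma \ref{lem:min_bounded} to produce the bounded sets $\mathrm{Min}_T(T,T')$ and $\mathrm{Min}_{T'}(T,T')$, obtain fixed points on $T$ and $T'$ via circumcentres, pass to the pointwise-fixed CAT(0) geodesic $\gamma$ between them, and dispose of the case where $\gamma$ meets the interior of a polygon by the triviality of polygon stabilisers. The one genuine difference is the treatment of the remaining case, where $\gamma$ stays in $X^{(1)}$: the paper keeps this case alive and kills the stabiliser by observing that, since $\Delta(T,T')\geq 3$, two consecutive edges of $\gamma$ must somewhere meet at angle $\pi/2$, and the stabilisers of two such perpendicular edges intersect trivially; you instead rule the case out altogether, noting that a CAT(0) geodesic makes Alexandrov angle $\geq \pi$ at each interior vertex, and since the complete bipartite links of $X$ have diameter $\pi$ with ``angle $\pi$ iff same label'', all of $\gamma$ would lie in a single tree-wall $T_0$ meeting both $T$ and $T'$, forcing $\Delta(T,T')\leq 2$. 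Your version thus shows that the paper's second case is in fact vacuous, a mild simplification; conversely, the extra care you take to upgrade the fixed point to a fixed \emph{vertex} (no edge inversions, because the two endpoint stabilisers are non-conjugate) is correct but not needed, since the argument runs equally well starting from arbitrary fixed points of $T$ and $T'$.
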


\begin{proof}
Elements in $\mathrm{stab}(T) \cap \mathrm{stab}(T')$ also stabilise $\mathrm{Min}_T(T,T')$ and $\mathrm{Min}_{T'}(T,T')$. By Lemma \ref{lem:min_bounded} these are bounded subsets of a CAT(0) space, it follows that elements of $\mathrm{stab}(T) \cap \mathrm{stab}(T')$ stabilise pointwise the circumcentre of  $\mathrm{Min}_T(T,T')$ and $\mathrm{Min}_{T'}(T,T')$, hence also the unique CAT(0) geodesic $\gamma$ between these circumcentres. If $\gamma$ is not contained in $X^{(1)}$, then $\gamma$ contains points whose stabiliser is trivial, hence 
$\mathrm{stab}(T) \cap \mathrm{stab}(T')$ is trivial. If $\gamma$ is contained in $X^{(1)}$, then since $\Delta(T, T')\geq 3$, then $\gamma$ contains at least two consecutive edges making an angle of $\pi/2$. The stabilisers of two such consecutive edges intersect trivially, hence $\mathrm{stab}(T) \cap \mathrm{stab}(T')$ is trivial. 
\end{proof}

\begin{proof}[Proof of Proposition \ref{prop:interstabwall}.]
This now follows from Corollaries \ref{cor:stab1}, \ref{cor:stab2}, and \ref{cor:stab3}.
\end{proof}

\section{Algebraic characterisation of $X$}\label{section:algcharac}

\noindent
In all this section, we fix a cycle $C_n$ of length $n \geq 5$ and a collection of non trivial groups $\mathcal{G}$ indexed by the vertices of $C_n$. As described in Section \ref{Davis}, we denote by $X$ the polygonal Davis complex of $C_n \mathcal{G}$. 

\medskip \noindent
Let $\mathcal{M}$ denote the collection of maximal subgroups of $C_n\mathcal{G}$ which decompose non trivially as direct products, and let $\mathcal{C}$ denote the collection of non trivial subgroups of $C_n \mathcal{G}$ which can be obtained by intersecting two subgroups of $\mathcal{M}$. A subgroup of $C_n \mathcal{G}$ which belongs to $\mathcal{C}$ is
\begin{itemize}
	\item \emph{$\mathcal{C}$-minimal} if it is minimal in $\mathcal{C}$ with respect to the inclusion;
	\item \emph{$\mathcal{C}$-maximal} if it is maximal in $\mathcal{C}$ with respect to the inclusion (ie., if it belongs to $\mathcal{M}$);
	\item \emph{$\mathcal{C}$-medium} otherwise.
\end{itemize}
\begin{definition}Let $\mathscr{X}$ be polygonal complex constructed in the following way:
\begin{itemize}
	\item the vertices of $\mathscr{X}$ are the $\mathcal{C}$-medium subgroups of $C_n \mathcal{G}$;
	\item the edges of $\mathscr{X}$ link two subgroups $H_1$ and $H_2$ if $\langle H_1,H_2 \rangle$ is $\mathcal{C}$-maximal;
	\item the polygons of $\mathscr{X}$ fill in the induced cycles of length exactly $n$.
\end{itemize}
The action of $C_n\cG$  on $\mathscr{X}^{(0)}$ by conjugation extends to an action on $\mathscr{X}$.
\end{definition}

The main statement of this section is that this new polygonal complex turns out to be equivariantly isomorphic to $X$.

\begin{prop}\label{prop:algch}
The map 
$$\left\{ \begin{array}{ccc} X^{(0)} & \to & \mathscr{X}^{(0)} \\ gH & \mapsto & gHg^{-1} \end{array} \right.$$
induces a $C_n \mathcal{G}$-equivariant isomorphism $\Phi: X \to \mathscr{X}$. 
\end{prop}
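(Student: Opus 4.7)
The plan is to translate the algebraic data defining $\mathscr{X}$ into the geometric data of $X$ provided by tree-walls, and then match vertices, edges and polygons in turn.

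First I would identify the collection $\mathcal{M}$ geometrically. By Corollary \ref{join}, any direct-product subgroup of $C_n\mathcal{G}$ is contained in a join subgroup, and the only induced subgraphs of $C_n$ (with $n\geq 5$) that are joins are the edges $\{i,i+1\}$ and the paths of length two $\{i-1,i,i+1\}$. The corresponding join subgroups are the conjugates of $G_i \times G_{i+1}$ and of $\langle G_{i-1},G_i,G_{i+1}\rangle = G_i \times (G_{i-1}\ast G_{i+1})$, only the latter being maximal; by Lemma \ref{lem:stab_treewall} these maximal ones coincide with the stabilisers of tree-walls of $X$, so $\mathcal{M}$ is precisely the set of tree-wall stabilisers. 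Applying Proposition \ref{prop:interstabwall}, the non-trivial intersections of two elements of $\mathcal{M}$ fall into exactly three types: tree-wall stabilisers themselves (which will be the $\mathcal{C}$-maximal subgroups), conjugates of $G_i \times G_{i+1}$ (the $\mathcal{C}$-medium subgroups, precisely the vertex stabilisers of $X$), and conjugates of $G_i$ (the $\mathcal{C}$-minimal subgroups, precisely the edge stabilisers of $X$). The three classes are clearly distinguishable---only the $\mathcal{C}$-maximal ones contain non-abelian free subgroups, and the $\mathcal{C}$-medium type strictly lies between the other two in the inclusion order---so I would have identified $\mathscr{X}^{(0)}$ with the set of vertex stabilisers of $X$.

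Next I would verify that $\Phi$ is an equivariant bijection on vertices. Well-definedness and equivariance are immediate, since $g(G_i\times G_{i+1})g^{-1}$ is the $C_n\mathcal{G}$-stabiliser of the vertex $g(G_i\times G_{i+1})$, and conjugation corresponds to left multiplication on cosets. Surjectivity is the preceding identification. For injectivity, an equality $g(G_i\times G_{i+1})g^{-1} = g'(G_j\times G_{j+1})g'^{-1}$ implies by a cyclic-reduction argument---using that a non-trivial element of a conjugate of a single vertex group $G_i$ always admits a cyclic reduced form consisting of a single syllable of label $i$---that $\{i,i+1\} = \{j,j+1\}$, and hence $i=j$ since $n\geq 5$. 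Then $g'^{-1}g$ normalises $G_i\times G_{i+1}$, and Lemma \ref{normalizer} combined with the fact that no vertex of $C_n$ is adjacent to both $i$ and $i+1$ gives $N(G_i\times G_{i+1})=G_i\times G_{i+1}$, so the cosets $g(G_i\times G_{i+1})$ and $g'(G_j\times G_{j+1})$ coincide.

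For edges, the forward implication is a direct computation: two adjacent vertices $g(G_k\times G_{k+1})$ and $g(G_{k+1}\times G_{k+2})$ have stabilisers whose span is $g\langle G_k,G_{k+1},G_{k+2}\rangle g^{-1}$, a tree-wall stabiliser in $\mathcal{M}$. The converse is the main obstacle. Given $\langle H_1,H_2\rangle = \mathrm{stab}(T)\in\mathcal{M}$, each $H_i$ fixes a unique vertex $v_i$ of $X$, a fact I would establish by showing that $\mathrm{Fix}(G_j)=T_j$ for every $j$ (via CAT(0) projection together with the triviality of polygon stabilisers from Observation \ref{obs:stab}). Since $H_i$ cannot be contained in any edge stabiliser (a conjugate of a single $G_k$ is too ``thin'' to accommodate the rank-two product $G_i\times G_{i+1}$, again by the cyclic-reduction argument) or polygon stabiliser (trivial), the CAT(0) closest-point projection of $v_i$ onto the convex subtree $T$ (Corollary \ref{cor:subtree}) must equal $v_i$ itself, giving $v_1,v_2\in T$. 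Passing to the Bass--Serre quotient $\mathrm{stab}(T)/\mathrm{fix}(T)=G_{k-1}\ast G_{k+1}$ acting on $T$, the equation $\langle H_1,H_2\rangle=\mathrm{stab}(T)$ translates into the images of $H_1,H_2$ generating the full free product; a straightforward free-product analysis then forces $v_1,v_2$ to be joined by an edge of $T$, hence of $X$.

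Finally, for polygons, the image under $\Phi$ of a polygon of $X$ is an induced $n$-cycle of $\mathscr{X}^{(1)}$ and so is filled by a polygon of $\mathscr{X}$; conversely, an induced $n$-cycle of $\mathscr{X}^{(1)}$ transports via $\Phi^{-1}$ to an induced $n$-cycle of $X^{(1)}$, which by a combinatorial Gau\ss--Bonnet computation (Theorem \ref{GaussBonnet}) in the $C(n)$--$T(4)$ setting of $X$ must bound a reduced disc diagram consisting of a single polygon of $X$. The polygons of $\mathscr{X}$ therefore correspond exactly to those of $X$, completing the equivariant isomorphism.
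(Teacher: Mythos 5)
Your proposal is correct and follows the same overall architecture as the paper: identify $\mathcal{M}$ with the tree-wall stabilisers via Corollary \ref{join} and Lemma \ref{lem:stab_treewall}, classify $\mathcal{C}$ via Proposition \ref{prop:interstabwall} (this is the paper's Lemma \ref{lem:Csubgroups}), get the vertex bijection from self-normalisation (Lemma \ref{normalizer}), reduce the edge correspondence to vertices lying on a common tree-wall (the paper's Lemma \ref{lem:vertexstabwall}, proved by the same CAT(0) projection argument you sketch), and recover polygons from induced $n$-cycles via Gau\ss--Bonnet (Lemma \ref{lem:cyclesfill}). The one place where you genuinely diverge is the hard direction of the edge correspondence: for $x,y$ in a tree-wall $T$, showing that $\langle \mathrm{stab}(x),\mathrm{stab}(y)\rangle=\mathrm{stab}(T)$ forces adjacency. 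The paper (Lemma \ref{lem:vertadjinwall}) proves this by a bare-hands induction on word length in $\mathrm{stab}(x)\cup\mathrm{stab}(y)$, establishing that $d(x,gx)$ and $d(y,gy)$ are multiples of $d(x,y)$ and then exhibiting an element of $\mathrm{stab}(T)$ violating this when $x,y$ are not adjacent. You instead pass to the quotient $\mathrm{stab}(T)/\mathrm{fix}(T)\cong G_{k-1}\ast G_{k+1}$ acting on $T$ as its Bass--Serre tree, where the statement becomes the standard fact that two vertex stabilisers of the Bass--Serre tree of a free product generate a proper subgroup (isomorphic to their free product, with fundamental domain the segment between the vertices, hence with trivial stabilisers at interior vertices) unless the vertices are adjacent. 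This is a legitimate and arguably cleaner route: it imports ping-pong on trees instead of redoing it by hand, at the cost of first verifying that $T$ with the $\mathrm{stab}(T)$-action really is the Bass--Serre tree in question (which your setup does supply, since $\mathrm{fix}(T)=G_k$ is the normal direct factor and the vertex stabilisers surject onto the free-product vertex groups). One cosmetic slip: your claim that only the $\mathcal{C}$-maximal subgroups contain non-abelian free subgroups fails when a vertex group itself contains one; but this aside is not needed, since the three classes are already separated by the inclusion order, as you also note.
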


\noindent
We begin by stating and proving a few preliminary results. First of all, we need to understand the subgroups involved in the definition of $\mathscr{X}$.

\begin{lem}\label{lem:Csubgroups}
The following statements hold:
\begin{itemize}
	\item A subgroup is $\mathcal{C}$-minimal if and only if it is conjugated to $G_i$ for some $i \in \mathbb{Z}/n\mathbb{Z}$.
	\item A subgroup is $\mathcal{C}$-medium if and only if it is conjugated to $\langle G_i, G_{i+1} \rangle$ for some $i \in \mathbb{Z}/n\mathbb{Z}$. 
	\item A subgroup is $\mathcal{C}$-maximum if and only if it is conjugated to $\langle G_{i-1},G_i,G_{i+1} \rangle$ for some $i \in \mathbb{Z}/n\mathbb{Z}$.
\end{itemize}
\end{lem}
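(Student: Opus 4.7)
The plan is to identify $\mathcal{M}$ and $\mathcal{C}$ through the correspondence between elements of $\mathcal{M}$ and tree-walls of $X$, and then to sort the members of $\mathcal{C}$ by comparing the three resulting families directly.

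First I would identify $\mathcal{M}$. Since $n \geq 5$, the cycle $C_n$ contains no triangle and no $K_{2,2}$, so its maximal join subgraphs are exactly the ``V-shapes'' $\{v_{i-1}, v_i, v_{i+1}\}$ for $i \in \bbZ_n$. The corresponding join subgroup $\langle G_{i-1}, G_i, G_{i+1}\rangle = G_i \times \langle G_{i-1}, G_{i+1}\rangle$ does split non-trivially as a direct product, and by Corollary \ref{join} any direct-product-splitting subgroup of $C_n\cG$ sits inside a conjugate of some $\langle G_{i-1}, G_i, G_{i+1}\rangle$. Maximality then follows from Lemma \ref{lem:stab_treewall}, which identifies each such subgroup with the stabiliser of a tree-wall of $X$: any $H \supsetneq \langle G_{i-1}, G_i, G_{i+1}\rangle$ splitting as a direct product would, again by Corollary \ref{join}, sit inside another tree-wall stabiliser, yielding a proper inclusion of tree-wall stabilisers and contradicting Proposition \ref{prop:interstabwall}.

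Once $\mathcal{M}$ is identified with the set of tree-wall stabilisers, Proposition \ref{prop:interstabwall} directly describes $\mathcal{C}$: every non-trivial intersection of two tree-wall stabilisers is either the full stabiliser (self-intersection, conjugate to $\langle G_{i-1}, G_i, G_{i+1}\rangle$), or conjugate to $\langle G_i, G_{i+1}\rangle$ (when the tree-walls are at $\Delta$-distance $1$), or conjugate to $G_i$ (at $\Delta$-distance $2$). Inside each tree-wall stabiliser these three families form a strict chain
$$g G_i g^{-1} \;\subsetneq\; g\langle G_i, G_{i+1}\rangle g^{-1} \;\subsetneq\; g\langle G_{i-1}, G_i, G_{i+1}\rangle g^{-1},$$
so conjugates of $\langle G_{i-1}, G_i, G_{i+1}\rangle$ are $\mathcal{C}$-maximum, and conjugates of $\langle G_i, G_{i+1}\rangle$ lie strictly between elements of $\mathcal{C}$, hence are $\mathcal{C}$-medium.

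To finish, I would verify that each conjugate of $G_i$ is $\mathcal{C}$-minimal. Any strictly smaller element of $\mathcal{C}$ must be a conjugate of $G_k$, of $\langle G_k, G_{k+1}\rangle$, or of $\langle G_{k-1}, G_k, G_{k+1}\rangle$ for some $k$. The last two cases would force non-trivial elements from two distinct vertex groups to be simultaneously conjugated into the single vertex group $G_i$, which is forbidden by the normal form theorem in graph products. A proper inclusion $k G_j k^{-1} \subsetneq g G_i g^{-1}$ similarly forces $j = i$, and then Lemma \ref{normalizer} (giving $N_{C_n\cG}(G_i) = \langle G_{i-1}, G_i, G_{i+1}\rangle$) combined with the standard fact that intersections of parabolic subgroups of a graph product are again parabolic forces $k G_i k^{-1} = g G_i g^{-1}$, contradicting the strict inclusion. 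The step I expect to be the main obstacle is exactly this last one: ruling out proper inclusions between conjugates of vertex subgroups, which relies on the parabolic-subgroup theory of graph products.
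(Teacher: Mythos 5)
Your proof is correct and follows essentially the same route as the paper: identify $\mathcal{M}$ as the conjugates of the $\langle G_{i-1},G_i,G_{i+1}\rangle$'s via Corollary \ref{join} (equivalently, the tree-wall stabilisers of Lemma \ref{lem:stab_treewall}), then read off $\mathcal{C}$ from Proposition \ref{prop:interstabwall} and sort by inclusion. The only difference is that you make explicit the maximality of the elements of $\mathcal{M}$ and the $\mathcal{C}$-minimality of conjugates of the $G_i$'s (using standard facts about parabolic subgroups of graph products), steps the paper's proof leaves implicit with ``the desired conclusion follows.''
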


\begin{proof}
The $\mathcal{C}$-maximum subgroups are the subgroups of $\mathcal{M}$, which are precisely the conjugates of the $\langle G_{i-1},G_i,G_{i+1} \rangle$'s as a consequence of Corollary \ref{join}. 
Therefore, it follows from Proposition \ref{prop:interstabwall} that $\mathcal{C}$ is the collection of the following subgroups:
\begin{itemize}
	\item the $G_i$'s where $i \in \mathbb{Z}/n\mathbb{Z}$;
	\item the $\langle G_i,G_{i+1} \rangle$'s where $i \in \mathbb{Z}/n\mathbb{Z}$;
	\item the $\langle G_{i-1},G_i,G_{i+1} \rangle$'s where $i \in \mathbb{Z}/n\mathbb{Z}$.
\end{itemize}
The desired conclusion follows. 
\end{proof}

\begin{lem}\label{lem:vertexstabwall}
Let $v \in X$ be a vertex and $T \subset X$ a tree-wall. Then $\mathrm{stab}(v)$ stabilises $T$ if and only if $v$ belongs to $T$. 
\end{lem}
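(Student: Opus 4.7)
The plan is to prove the two implications separately, using Lemma \ref{lem:stab_treewall} for the easy direction and a CAT(0) projection argument for the harder one.

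For the forward direction ($v \in T \Rightarrow \mathrm{stab}(v) \subset \mathrm{stab}(T)$), I would translate by an element of $C_n\cG$ to reduce to the case where $T$ is the canonical tree-wall containing the standard edge $e_0 = G_i$, with $\mathrm{stab}(T) = \langle G_{i-1}, G_i, G_{i+1}\rangle$ by Lemma \ref{lem:stab_treewall}. The transitivity argument in the proof of that lemma shows that $\mathrm{stab}(T)$ acts transitively on the edges of $T$, hence every vertex of $T$ has the form $h \cdot u_0$ or $h \cdot v_0$ for some $h \in \mathrm{stab}(T)$, where $u_0, v_0$ are the endpoints of $e_0$. Their stabilisers $\langle G_{i-1}, G_i\rangle$ and $\langle G_i, G_{i+1}\rangle$ are both subgroups of $\mathrm{stab}(T)$, and conjugating by $h \in \mathrm{stab}(T)$ keeps us inside $\mathrm{stab}(T)$, so $\mathrm{stab}(v) \subset \mathrm{stab}(T)$.

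For the converse, I would appeal to the CAT(0) geometry of $X$. Since $T$ is a convex subtree by Corollary \ref{cor:subtree}, the nearest-point projection $p := \pi_T(v) \in T$ is well-defined, and the equivariance of $\pi_T$ under $\mathrm{stab}(T)$ together with the hypothesis $\mathrm{stab}(v) \subset \mathrm{stab}(T)$ yields $g \cdot p = p$ for every $g \in \mathrm{stab}(v)$. Since $\mathrm{stab}(v)$ fixes both $v$ and $p$, uniqueness of CAT(0) geodesics implies that $\mathrm{stab}(v)$ fixes the geodesic $[v, p]$ pointwise. Assuming for contradiction $v \neq p$, the initial portion of $[v, p]$ at $v$ must lie in some cell of $X$ incident to $v$, and I split into two cases. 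If it enters the interior of a polygon of $X$ containing $v$, then $\mathrm{stab}(v)$ fixes a point with trivial stabiliser by Observation \ref{obs:stab}, contradicting the non-triviality of $\mathrm{stab}(v)$. If it initially runs along an edge $e'$ incident to $v$, then $\mathrm{stab}(v) \subset \mathrm{stab}(e')$; but $v$ being an endpoint of $e'$ gives the reverse inclusion $\mathrm{stab}(e') \subset \mathrm{stab}(v)$, and this inclusion is strict because $\mathrm{stab}(e')$ is conjugated to a single vertex group by Observation \ref{obs:stab}, while $\mathrm{stab}(v)$ is conjugated to a direct product $G_i \times G_{i+1}$ of two non-trivial vertex groups. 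This forces $\mathrm{stab}(v) \subsetneq \mathrm{stab}(v)$, which is absurd. Hence $v = p \in T$.

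The main subtlety will be the case analysis on the initial direction of $[v, p]$ in the converse direction: one must verify that a geodesic starting at a vertex of the polygonal complex $X$ can only either run along an edge incident to $v$ or enter the interior of an adjacent polygon, so that in both situations Observation \ref{obs:stab} directly controls the stabiliser of the moving point along $[v, p]$. This approach keeps the argument entirely within the CAT(0) and cellular structure of $X$ and avoids any appeal to normal-form or parabolic-convexity results for graph products.
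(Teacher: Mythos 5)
Your proposal is correct and the substantive implication (stab$(v)$ stabilises $T$ $\Rightarrow$ $v\in T$) follows exactly the paper's argument: project $v$ onto the convex subtree $T$, note that $\mathrm{stab}(v)$ fixes the geodesic to the projection pointwise, and rule out both the polygon-interior case (trivial stabiliser) and the edge case (the proper inclusion $\mathrm{stab}(e')\subsetneq\mathrm{stab}(v)$). For the easy direction the paper simply invokes the label-preserving nature of the action and maximality of tree-walls, whereas you route through the transitivity statement inside the proof of Lemma \ref{lem:stab_treewall}; both are valid, and the difference is cosmetic.
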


\begin{proof}
Suppose that $\mathrm{stab}(v)$ stabilises $T$. Because $T$ is a convex subspace of $X$, endowed with its CAT(0) metric, $\mathrm{stab}(v)$ must  also fix its unique projection $w$ onto $T$, and a fortiori the unique  geodesic $[v, w]$ between $v$ and $w$. Notice that $[v, w]$ cannot intersect the interior of some polygon $Q$ of $X$, since otherwise we would deduce that
$$\mathrm{stab}(v) \subset \mathrm{stab}(Q) = \{ 1 \}.$$
Therefore, $[v, w]$ is contained into the $1$-skeleton of $X$. Similarly, if $v \neq w$ and if $e$ denotes the first edge of $[v,w]$, it follows that $\mathrm{stab}(v) = \mathrm{stab}(e)$, which is impossible since the inclusion $\mathrm{stab}(v) \subset \mathrm{stab}(e)$ is, up to conjugation, of the form $G_i \subset G_i \times G_{i+1}$. Consequently, $v=w \in T$.

\medskip \noindent
Conversely, suppose that $v \in T$. Because the action $C_n \mathcal{G} \curvearrowright X$ preserves the labelling of the edges of $X$, it follows from the definition of a tree-wall that $\mathrm{stab}(v)$ stabilises $T$.
\end{proof}

\begin{lem}\label{lem:vertadjinwall}
Let $T \subset X$ be a tree-wall and $x,y \in T$ two vertices. Then $x$ and $y$ are adjacent if and only if $\langle \mathrm{stab}(x), \mathrm{stab}(y) \rangle = \mathrm{stab}(T)$. 
\end{lem}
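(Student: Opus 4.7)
The forward direction is immediate from the construction of $T$: adjacent vertices $x, y$ of $T$ share an edge $e$ of label $i$, and its two endpoints correspond to cosets $g(G_{i-1} \times G_i)$ and $g(G_i \times G_{i+1})$ for some common $g \in C_n\mathcal{G}$, so they have stabilisers $g\langle G_{i-1}, G_i\rangle g^{-1}$ and $g\langle G_i, G_{i+1}\rangle g^{-1}$. Their join is $g\langle G_{i-1}, G_i, G_{i+1}\rangle g^{-1} = \mathrm{stab}(T)$, by Lemma~\ref{lem:stab_treewall}.

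For the converse, I argue by contrapositive: if $x, y \in T$ are not adjacent, then $H := \langle \mathrm{stab}(x), \mathrm{stab}(y)\rangle$ is strictly contained in $\mathrm{stab}(T)$. The case $x = y$ is immediate, as $\mathrm{stab}(x)$ is a conjugate of $\langle G_{i-1}, G_i\rangle$ or $\langle G_i, G_{i+1}\rangle$, each strictly smaller than $\mathrm{stab}(T) = \langle G_{i-1}, G_i, G_{i+1}\rangle$. Assume now $x \neq y$. Since every vertex stabiliser in $T$ contains the pointwise stabiliser $G_i$ of $T$, we have $G_i \subset H$, so $H = \mathrm{stab}(T)$ amounts to $\pi(H) = G_{i-1} * G_{i+1}$ under the quotient $\pi \colon \mathrm{stab}(T) \to \mathrm{stab}(T)/G_i \cong G_{i-1} * G_{i+1}$. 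Through this quotient, $T$ becomes the Bass--Serre tree of the free product, and $\pi(\mathrm{stab}(x))$, $\pi(\mathrm{stab}(y))$ are vertex stabilisers, each a conjugate of $G_{i-1}$ or $G_{i+1}$. If $x$ and $y$ have the same type, both are conjugates of the same factor (say $G_{i-1}$), and $\pi(H)$ lies in the kernel of the projection $G_{i-1} * G_{i+1} \to G_{i+1}$ killing $G_{i-1}$; in particular it is proper.

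It remains to treat the case where $x, y$ have different types; then $d(x, y)$ is odd and, by non-adjacency, at least $3$. Translating by an element of $G_{i-1} * G_{i+1}$, one may assume $\pi(\mathrm{stab}(x)) = G_{i-1}$ (stabilising a base vertex $v_0$) and $\pi(\mathrm{stab}(y)) = gG_{i+1}g^{-1}$ (stabilising $gw_0$) with $g \notin G_{i-1}G_{i+1}$, which is precisely the condition that $v_0$ and $gw_0$ be non-adjacent in the Bass--Serre tree. It suffices to show $g \notin \pi(H)$: indeed, $g \in \pi(H)$ would give $G_{i+1} = g^{-1}(gG_{i+1}g^{-1})g \subset \pi(H)$, and hence $\pi(H) \supset \langle G_{i-1}, G_{i+1}\rangle = G_{i-1} * G_{i+1}$. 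Since $G_{i-1}$ and $gG_{i+1}g^{-1}$ are distinct conjugates of different factors, they intersect trivially, and a ping-pong argument on the Bass--Serre tree identifies $\pi(H)$ with the internal free product $G_{i-1} * gG_{i+1}g^{-1}$. Writing $g = c_1 \cdots c_n$ in reduced normal form with $c_n \in G_{i-1}$ (the minimal-length representative of the coset $gw_0$), every nontrivial element of $\pi(H)$ admits a reduced expression $a_0 \prod_{j=1}^k (gb_jg^{-1})a_j$; a direct bookkeeping of syllables in $G_{i-1} * G_{i+1}$, distinguishing whether $c_1 \in G_{i-1}$ or $c_1 \in G_{i+1}$, shows that such an expression with $k \geq 1$ has normal-form length strictly greater than $n$. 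As the normal form of $g$ has length $n$ and $g \notin G_{i-1}$ excludes the case $k = 0$, we conclude $g \notin \pi(H)$. The main obstacle is this length computation, which requires tracking the cancellations between $g$, $g^{-1}$, and the interlaced $G_{i-1}$-elements $a_j$.
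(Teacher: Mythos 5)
Your proof is correct, but it takes a genuinely different route from the paper's. For the converse, the paper works entirely inside the tree $T$: it proves by induction on word length in $\mathrm{stab}(x)\cup\mathrm{stab}(y)$ that every $h\in H$ satisfies $d(x,hx),d(y,hy)\in d\,\mathbb{Z}$ with $d=d(x,y)$, and then, for non-adjacent $x,y$, exhibits an element of the stabiliser of an intermediate vertex $z$ that moves $x$ a distance strictly between $0$ and $d$, hence lies in $\mathrm{stab}(T)\setminus H$. You instead quotient $\mathrm{stab}(T)\cong G_i\times(G_{i-1}\ast G_{i+1})$ by $\mathrm{fix}(T)=G_i$ (which lies in $H$ and is normal, so the reduction is legitimate), identify $T$ with the Bass--Serre tree of $G_{i-1}\ast G_{i+1}$, and settle the question by normal forms in the free product. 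Your same-type case (image contained in the kernel of a retraction) and your syllable-length bound in the different-type case both check out: with $g=c_1\cdots c_n$ the minimal coset representative ($c_n\in G_{i-1}$, $n\geq 2$ by non-adjacency), the only possible cancellations in $a_0\prod_j(gb_jg^{-1})a_j$ occur at the junctions $c_1^{-1}a_jc_1$ and at the two ends, so the normal-form length is at least $2n-1>n$ for $k\geq 1$. Two remarks: the ping-pong identification of $\pi(H)$ with $G_{i-1}\ast gG_{i+1}g^{-1}$ is not actually needed --- any element of $\pi(H)$ admits \emph{some} alternating expression with non-trivial interior terms, and the length bound already shows no such expression can equal $g$ --- so you could drop that step; and the parenthetical ``$g\in\pi(H)$ would give $\pi(H)$ everything'' is redundant, since $g\notin\pi(H)$ by itself makes $\pi(H)$ proper. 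Your approach trades the paper's self-contained geometric induction for a more standard algebraic argument; it is arguably more transparent, at the cost of importing the Bass--Serre identification and a normal-form computation that, while routine, does require the two-case analysis on the first letter of $g$ that you allude to.
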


\begin{proof}
According to the previous lemma, $H:= \langle \mathrm{stab}(x), \mathrm{stab}(y) \rangle$ stabilises $T$ since $x,y \in T$. 

\medskip \noindent
We claim that, if we set $d=d(x,y)$, then $d(x,gx) \in d \mathbb{Z}$ and $d(y,gy) \in d \mathbb{Z}$ for every $g \in H$. Let us argue by induction on the length of $g$ over the generating set $\mathrm{stab}(x) \cup \mathrm{stab}(y)$. If $g$ has length zero, there is nothing to prove. Now, suppose that $g$ has positive length, and write this element as a word $g_1 \cdots g_r$ of minimal length where $r \geq 1$ and $g_1, \ldots, g_r \in \mathrm{stab}(a) \cup \mathrm{stab}(b)$. For convenience, set $g'=g_2 \cdots g_r$; and suppose that $g_1 \in \mathrm{stab}(x)$ (the case $g_1 \in \mathrm{stab}(y)$ being symmetric). As a consequence of our induction hypothesis, one knows that $d(x,g'x)$ and $d(y,g'y)$ both belong to $d \mathbb{Z}$, which implies that $x$ and $y$ cannot belong to the interior of $[g'x,g'y]$. Because $d(g'x,g'y)=d(x,y)$, it follows that either $x=g'x$ and $y=g'y$, and we are done, or the interiors of $[x,y]$ and $[g'x,g'y]$ are disjoint. We distinguish two cases. First, suppose that $y$ separates $x$ from $[g'x,g'y]$. If $[x,g'y] \cap [x,g_1g'y]$ contains an edge, then $g_1$ must fix it, which implies that $g_1 \in \mathrm{fix}(T)$ according to Lemma \ref{lem:wallfix}, and finally one gets 
$$d(y,gy)= d(g_1y,g_1g'y)=d(y,g'y) \in d \mathbb{Z}.$$
So suppose that $[x,g'y] \cap [x,g_1g'y]$ does not contain any edge, ie., is reduced to $\{x\}$. One has
$$d(y,gy)= d(y,x)+d(x,g_1y)+d(g_1y,g_1g'y) = 2d + d(y,g'y) \in d \mathbb{Z}.$$
Second, suppose that $x$ separates $y$ from $[g'x,g'y]$. As before, if $[x,y] \cap [x,g_1y]$ or $[x,g'y] \cap [x,g_1g'y]$ contains an edge, we deduce that $g_1$ fixes $T$ and next that $d(y,gy) \in d \mathbb{Z}$. So suppose that $[x,y] \cap [x,g_1y]=\{x\}$ and $[x,g'y] \cap [x,g_1g'y]=\{x\}$. Notice that we also have $[x,g_1y] \cap [x,g_1g'y]= \{ x\}$. Indeed, otherwise the first edges of $[x,g_1y]$ and $[x,g_1g'y]$ would coincide, which is impossible since the first edge of $[x,y]$ is sent by $g_1$ to the first edge of $[x,g_1y]$, and the first edge of $[x,g'y]$ is sent by $g_1$ to the first edge of $[x,g_1g'y]$. Consequently,
$$d(y,gy)=d(y,x)+d(x,gy)= d(g_1y,g_1x)+d(x,gy)= d(g_1y,g_1g'y)=d(y,g'y) \in d \mathbb{Z}.$$
Thus, we have proved that $d(y,gy) \in d \mathbb{Z}$. Notice also that
$$d(x,gx)= d(g_1x,g_1g'x)=d(x,g'x) \in d \mathbb{Z}.$$
This concludes the proof of our claim. 

\medskip \noindent
Now, suppose that $x$ and $y$ are not adjacent. As a consequence, there exists a vertex $z \in [x,y]$ which is different from both $x$ and $y$. Suppose that $d(x,z) \leq d(z,y)$ (the case $d(z,y) \leq d(x,z)$ being similar). Similarly to $x$ and $y$, $\mathrm{stab}(z)$ stabilises $T$; moreover, by definition of the polygon of groups defining our polygonal complex $X$, $\mathrm{stab}(z)$ also acts transitively on the edges admitting $z$ as an endpoint. Consequently, there exists some $g \in \mathrm{stab}(z)$ such that $g \cdot x \in [z,y]$. A fortiori, one has $0< d(x,gx) < d$. It follows from our previous claim that necessarily $g \notin H$. Thus, we have proved that $H \subsetneq \mathrm{stab}(T)$.

\medskip \noindent
Conversely, suppose that $x$ and $y$ are adjacent. By construction of $X$, there exists $i \in \bbZ_n$, and an element $g \in C_n \mathcal{G}$ so that $x=g \langle G_{i-1}, G_i \rangle$ and $y=g \langle G_i,G_{i+1} \rangle$. Consequently,
$$\langle \mathrm{stab}(x), \mathrm{stab}(y) \rangle = g \cdot \langle G_{i-1},G_i,G_{i+1} \rangle \cdot g^{-1},$$
which coincides with the $\mathrm{stab}(T)$ according to Lemma \ref{lem:stab_treewall}.
\end{proof}

\begin{lem}\label{lem:cyclesfill}
Every induced cycle of length $n$ in $X$ bounds a polygon.
\end{lem}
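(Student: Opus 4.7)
The plan is to fill the induced $n$-cycle $\gamma$ by a reduced disc diagram $D \to X$ and use the combinatorial Gau\ss--Bonnet theorem to show that $D$ must consist of a single $2$-cell, which is then the desired polygon.

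First I would invoke the Lyndon--van Kampen theorem to get a reduced disc diagram $D \to X$ with boundary $\gamma$, chosen to be of minimal area. Since $\gamma$ is embedded in $X^{(1)}$ (an induced cycle is in particular embedded), this minimal diagram can be taken to be non-degenerate: any spur or cut vertex in $D$ would allow one to produce a strictly smaller diagram filling the same loop. Consequently the boundary of $D$ is a topological circle containing exactly $n$ vertices, each mapping to a distinct vertex of $\gamma$.

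Next I would bound the curvature contributions. Each $2$-cell $f$ of $D$ maps homeomorphically onto an $n$-gon of $X$, hence $n_f = n$ and
$$\kappa_D(f) = 2\pi - n \cdot \frac{\pi}{2} \leq -\frac{\pi}{2},$$
since $n \geq 5$. For an interior vertex $v$, the link of $v$ in $D$ is a circle mapping, by reducedness of $D$, to an essential closed walk in the link of the image vertex in $X$. Because every $G_i$ is non-trivial, this link is a complete bipartite graph $K_{|G_i|,|G_{i+1}|}$ with both parts of size at least $2$, hence of girth $4$ (this is the $T(4)$ property used throughout the paper). Thus $n_v \geq 4$ and $\kappa_D(v) \leq 0$. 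For a boundary vertex, $\kappa_D(v) = \pi - n_v \pi/2 \leq \pi/2$.

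Finally I would apply Theorem \ref{thm:Gauss_Bonnet}. Writing $k$ for the number of $2$-cells of $D$ and using that there are exactly $n$ boundary vertices, we obtain
$$2\pi \;=\; \sum_v \kappa_D(v) + \sum_f \kappa_D(f) \;\leq\; n \cdot \frac{\pi}{2} + k\left(2\pi - n\frac{\pi}{2}\right),$$
which rearranges to $k(n-4) \leq n-4$, giving $k \leq 1$. Since $D$ fills a non-trivial embedded loop, it must contain at least one $2$-cell, so $k = 1$ and $D$ is a single polygon whose boundary maps onto $\gamma$, as desired.

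The main delicate points are the non-degeneracy of a minimal reduced disc diagram filling an embedded loop, and the justification that reducedness combined with the bipartite structure of the links yields the bound $n_v \geq 4$ at interior vertices; everything else is a routine counting argument via Gau\ss--Bonnet.
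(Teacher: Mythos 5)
Your proof is correct and follows essentially the same route as the paper: fill the induced $n$-cycle with a reduced disc diagram and use the combinatorial Gau\ss--Bonnet theorem together with the $T(4)$ property (non-positive curvature at interior vertices, at most $\pi/2$ at each of the $n$ boundary vertices, at most $2\pi - n\pi/2 \leq -\pi/2$ per polygon) to force the diagram to consist of a single polygon. The only difference is that you spell out the non-degeneracy of a minimal diagram and the girth-$4$ justification for interior vertices, which the paper leaves implicit.
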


\begin{proof}
Let $\gamma$ be a induced cycle of length $n$ in $X$. Fix a reduced disc diagram $D \to X$ bounding $\gamma$. Our goal is to show that $D$ is necessarily a polygon, which implies the desired conclusion. Since $X$ is a $T(4)$ complex, the only vertices of $D$ contributing to positive curvature are (possibly) vertices of $\partial D$, which contribute to at most $\frac{\pi}{2}$ each. Each polygon of $D$ contributes to $2\pi - n \frac{\pi}{2}$, which is negative as $n \geq 5$. Thus, we have 
$$\underset{v}{\sum} \kappa_D(v) + \underset{f}{\sum} \kappa_D(f)  \leq n \frac{\pi}{2} + \left(2\pi - n \frac{\pi}{2} \right) = 2\pi. $$
The combinatorial Gau\ss--Bonnet Theorem now implies that the previous inequality must be an equality. In particular, there can only be one polygon in $D$, which concludes the proof.
%
\end{proof}

\begin{proof}[Proof of Proposition \ref{prop:algch}.]
The fact that the map
$$\left\{ \begin{array}{ccc} X^{(0)} & \to & \mathscr{X}^{(0)} \\ gH & \mapsto & gHg^{-1} \end{array} \right.$$
is a bijection follows from Lemma \ref{normalizer}, which shows that a subgroup $\langle G_i,G_{i+1} \rangle$, where $i \in \mathbb{Z}/n\mathbb{Z}$, is self-normalising, and from Lemma \ref{lem:Csubgroups}, which describes the vertices of $\mathscr{X}$. Next, let $g_1H_1$ and $g_2H_2$ be two vertices of $X$. If they are adjacent, then it follows from Lemma \ref{lem:vertadjinwall} that $\langle g_1H_1g_1^{-1},g_2H_2g_2^{-1} \rangle$ coincides with the stabiliser of the tree-wall passing through $g_1H_1$ and $g_2H_2$, and so is a $\mathcal{C}$-maximal subgroup. A fortiori, $g_1H_1g_1^{-1}$ and $g_2H_2g_2^{-1}$ are two adjacent vertices of $\mathscr{X}$. Conversely, suppose that $g_1H_1g_1^{-1}$ and $g_2H_2g_2^{-1}$ are two adjacent vertices of $\mathscr{X}$. It means that $\langle g_1H_1g_1^{-1},g_2H_2g_2^{-1} \rangle$ coincides with the stabiliser of some tree-wall $T \subset X$. As a consequence of Lemma \ref{lem:vertexstabwall}, necessarily the vertices $g_1H_1$ and $g_2H_2$ both belongs to $T$, and finally we conclude from Lemma \ref{lem:vertadjinwall} that $g_1H_1$ and $g_2H_2$ must be adjacent in $X$. Thus, we have proved that our map induces an isomorphism $X^{(1)} \to \mathscr{X}^{(1)}$. Finally, the conclusion follows from Lemma \ref{lem:cyclesfill}.
\end{proof}

\section{Automorphisms}\label{section:aut}

The previous algebraic characterisation of the Davis complex allows us to extend the action of $C_n\cG$ to an action of $\mathrm{Aut}(C_n\cG)$, as we explain in this section.

\begin{prop}
The automorphism group $\mathrm{Aut}(C_n \mathcal{G})$ acts by isometries on $\mathscr{X}$ via
$$\varphi \cdot H = \varphi(H) \ \text{for every $\varphi \in \mathrm{Aut}(C_n \mathcal{G})$ and every $H \in \mathscr{X}^{(0)}$}.$$
Moreover, the map $\Phi$ of Proposition \ref{prop:algch} satisfies
$$\Phi(g \cdot x) = \iota(g) \cdot \Phi(x)$$
for every $g \in C_n \mathcal{G}$ and every $x \in X^{(0)}$, where $\iota(g)$ denotes the conjugacy by $g$.
\end{prop}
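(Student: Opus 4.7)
The plan is to exploit the fact that the polygonal complex $\mathscr{X}$ is constructed entirely from group-theoretic data invariant under automorphisms, and then verify the equivariance of $\Phi$ by a direct computation on cosets.

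First, I would check that the action on vertices is well-defined. The collection $\mathcal{M}$ consists of the maximal subgroups of $C_n\mathcal{G}$ admitting a non-trivial direct product decomposition; both maximality among subgroups and splitting as a non-trivial direct product are preserved by any automorphism, so $\mathrm{Aut}(C_n\mathcal{G})$ permutes $\mathcal{M}$. Taking pairwise intersections, the collection $\mathcal{C}$ is preserved as well. Since $\mathcal{C}$-minimality, $\mathcal{C}$-mediumness and $\mathcal{C}$-maximality are defined purely in terms of the inclusion order on $\mathcal{C}$, each of these three subfamilies is also preserved. In particular $\mathscr{X}^{(0)}$ is stable under the induced action, and the formula $\varphi \cdot H = \varphi(H)$ defines an action of $\mathrm{Aut}(C_n\mathcal{G})$ on $\mathscr{X}^{(0)}$ (functoriality of composition is automatic).

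Next, I would check that the cell structure of $\mathscr{X}$ is preserved. Two $\mathcal{C}$-medium subgroups $H_1, H_2$ are adjacent in $\mathscr{X}$ if and only if $\langle H_1, H_2 \rangle$ is $\mathcal{C}$-maximal, and for $\varphi \in \mathrm{Aut}(C_n\mathcal{G})$ one has $\langle \varphi(H_1), \varphi(H_2) \rangle = \varphi(\langle H_1, H_2 \rangle)$, which is again $\mathcal{C}$-maximal by the previous paragraph. Thus the action extends to $\mathscr{X}^{(1)}$. Polygons of $\mathscr{X}$ fill induced cycles of length $n$, a purely combinatorial property, so the action extends uniquely to $\mathscr{X}$. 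Since every $2$-cell of $\mathscr{X}$ is an $n$-gon, any cellular self-map is automatically an isometry for the natural piecewise-Euclidean metric, giving the required action by isometries.

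Finally, the equivariance formula is a brief computation on vertices, and extends to the whole complex by cellularity of both actions. Writing a vertex of $X$ as a coset $hH$ with $H$ a $\mathcal{C}$-medium subgroup, one has $\Phi(hH) = hHh^{-1}$; for any $g \in C_n\mathcal{G}$,
\[
\Phi(g \cdot hH) = \Phi((gh)H) = (gh)H(gh)^{-1} = g(hHh^{-1})g^{-1} = \iota(g) \cdot \Phi(hH).
\]
The main conceivable obstacle, namely promoting an action of $C_n\mathcal{G}$ to an action of $\mathrm{Aut}(C_n\mathcal{G})$, is entirely bypassed here because the construction of $\mathscr{X}$ is phrased in automorphism-invariant terms; the genuine work was carried out in Proposition \ref{prop:algch} and in the tree-wall analysis of Section 2.
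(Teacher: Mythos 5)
Your proposal is correct and follows essentially the same route as the paper: the action exists because $\mathcal{M}$, and hence the whole construction of $\mathscr{X}$, is phrased in automorphism-invariant group-theoretic terms, and the equivariance of $\Phi$ is the same one-line coset computation. You simply spell out in more detail the intermediate steps (invariance of $\mathcal{C}$ and of the three subfamilies, preservation of edges and polygons) that the paper compresses into ``it follows from the construction of $\mathscr{X}$''.
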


\begin{proof}
Since that the canonical action $\mathrm{Aut}(C_n \mathcal{G}) \curvearrowright C_n \mathcal{G}$ preserves the collection of subgroups $\mathcal{M}$, it follows from the construction of $\mathscr{X}$ that $\mathrm{Aut}(C_n \mathcal{G})$ acts on $\mathscr{X}$ by isometries. Next, fix an element $g \in C_n \mathcal{G}$ and a vertex $x \in X^{(0)}$. Write $x=h \langle G_i,G_{i+1} \rangle$ for some element $h \in C_n \mathcal{G}$ and some index $i \in \mathbb{Z}/n\mathbb{Z}$; for short, set $H= \langle G_i,G_{i+1} \rangle$. Then
$$\Phi(g \cdot x) = \Phi (ghH) = ghHh^{-1}g^{-1} = \iota_g \left( hHh^{-1} \right) = \iota_g \cdot \Phi(hH) = \iota_g \cdot \Phi(x).$$
This concludes the proof. 
\end{proof}

\noindent
By transfering the action $\mathrm{Aut}(C_n \mathcal{G}) \curvearrowright \mathscr{X}$ via the map given by Proposition \ref{prop:algch}, one immediately gets:

\begin{cor}
The automorphism group $\mathrm{Aut}(C_n \mathcal{G})$ acts by isometries on the Davis complex $X$ via
$$\varphi \cdot kH = \varphi(k) \varphi(H) \ \text{for every $\varphi \in \mathrm{Aut}(C_n \mathcal{G})$ and every $kH \in X^{(0)}$.}$$
Moreover, this action extends $C_n \mathcal{G} \curvearrowright X$ if one identifies canonically $C_n \mathcal{G}$ with $\mathrm{Inn}(C_n \mathcal{G})$. 
\end{cor}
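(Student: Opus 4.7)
The plan is to simply transfer the action $\mathrm{Aut}(C_n\cG) \curvearrowright \mathscr{X}$ constructed in the previous proposition to $X$ via the $C_n\cG$-equivariant isomorphism $\Phi : X \to \mathscr{X}$ from Proposition \ref{prop:algch}. Concretely, for every $\varphi \in \mathrm{Aut}(C_n\cG)$ and every $x \in X$, I would set
$$\varphi \cdot x := \Phi^{-1}\bigl( \varphi \cdot \Phi(x) \bigr).$$
Since $\Phi$ is an isomorphism of polygonal complexes and $\mathrm{Aut}(C_n\cG)$ acts on $\mathscr{X}$ by isometries, this immediately defines an isometric action of $\mathrm{Aut}(C_n\cG)$ on $X$.

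Next I would unpack the formula on vertices. Given $x = kH \in X^{(0)}$ with $H = \langle G_i, G_{i+1} \rangle$, the definition of $\Phi$ gives $\Phi(x) = kHk^{-1}$, so
$$\varphi \cdot \Phi(x) \;=\; \varphi(kHk^{-1}) \;=\; \varphi(k)\,\varphi(H)\,\varphi(k)^{-1}.$$
Because $\varphi$ preserves the algebraically defined families $\mathcal{M}$ and $\mathcal{C}$, the subgroup $\varphi(H)$ is again $\mathcal{C}$-medium, hence conjugate to some $\langle G_j, G_{j+1} \rangle$ by Lemma \ref{lem:Csubgroups}; in particular $\varphi(k)\varphi(H)\varphi(k)^{-1}$ is a genuine vertex of $\mathscr{X}$. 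Applying $\Phi^{-1}$ returns the left coset $\varphi(k)\varphi(H)$, giving exactly the announced formula $\varphi \cdot kH = \varphi(k)\varphi(H)$.

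Finally, to check that this action extends the original action of $C_n\cG$ on $X$, I would specialise to $\varphi = \iota(g)$ for some $g \in C_n\cG$ and use the equivariance relation $\Phi(g \cdot y) = \iota(g) \cdot \Phi(y)$ already established in the preceding proposition. Then
$$\iota(g) \cdot x \;=\; \Phi^{-1}\bigl( \iota(g) \cdot \Phi(x) \bigr) \;=\; \Phi^{-1}\bigl( \Phi(g \cdot x) \bigr) \;=\; g \cdot x,$$
so that the restriction of the $\mathrm{Aut}(C_n\cG)$-action to $\mathrm{Inn}(C_n\cG)$ recovers the original left-multiplication action. Since $C_n\cG$ has trivial centre by Corollary \ref{center} (the cycle $C_n$ is not a join), the canonical identification $C_n\cG \simeq \mathrm{Inn}(C_n\cG)$ is an isomorphism, which completes the verification.

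There is essentially no obstacle here: the only non-trivial point to check is that the formula $\varphi \cdot kH = \varphi(k)\varphi(H)$ yields a well-defined vertex of $X$, i.e.\ that $\varphi(H)$ is again of the required form up to conjugation, and this is precisely what Lemma \ref{lem:Csubgroups} provides. All the substantive work has already been done in the construction of $\Phi$ in Section \ref{section:algcharac}.
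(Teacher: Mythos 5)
Your proposal is correct and is exactly the paper's argument: the paper proves this corollary by transferring the action $\mathrm{Aut}(C_n\mathcal{G}) \curvearrowright \mathscr{X}$ back through the equivariant isomorphism $\Phi$ of Proposition \ref{prop:algch}, which is precisely what you do. You simply spell out the vertex formula and the restriction to $\mathrm{Inn}(C_n\mathcal{G})$ in more detail than the paper, which states the transfer as immediate.
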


We now use the geometry of the action $\mathrm{Aut}(C_n \mathcal{G}) \curvearrowright X$ to completely compute the automorphism group $\mathrm{Aut}(C_n \mathcal{G}).$

\begin{definition}[local automorphism]
Let $(\sigma, \Phi)$ be the data of a symmetry $\sigma$ of $C_n$, whose vertices are identified with the elements of $\mathbb{Z}/n\mathbb{Z}$, and a collection of isomorphisms $\Phi = \{ \varphi_i : G_i \to G_{\sigma(i)} \mid i \in \mathbb{Z}/n\mathbb{Z} \}$. Such a couple $(\sigma, \Phi)$ naturally defines an automorphism of $C_n \mathcal{G}$ by $g \mapsto \varphi_i(g)$ for every $i \in \mathbb{Z}/n\mathbb{Z}$ and every $g \in G_i$, which will be referred to as a \emph{local automorphism} of $C_n \mathcal{G}$. We denote by $\mathrm{Loc}(C_n \mathcal{G})$ the subgroup of the local automorphisms and by $\mathrm{Loc}^0(\Gamma \mathcal{G})$ the subgroup of the local automorphisms associated to pairs $(\sigma, \Phi)$ where $\sigma= \mathrm{Id}$.
\end{definition}

\noindent
It is worth noticing that $\mathrm{Loc}(C_n \mathcal{G})$ decomposes as semi-direct product $\mathrm{Loc}^0(C_n \mathcal{G}) \rtimes \mathrm{Sym}$, where $\mathrm{Sym}$ is the subgroup of the symmetry group of $C_n$ preserving the isomorphism classes of the vertex-groups. However, in general there is no canonical isomorphism between these groups. For convenience, from now on we will denote $\mathrm{Loc}(C_n \mathcal{G})$ by $\mathrm{Loc}$, and similarly the subgroup $\mathrm{Inn}(C_n \mathcal{G})$ of the inner automorphisms will be denoted by $\mathrm{Inn}$.

Before computing $\mathrm{Aut}(C_n \mathcal{G})$, we mention to elementary results about the action $\mathrm{Aut}(C_n \mathcal{G}) \curvearrowright X$. Since there is a single $C_n\cG$-orbit of polygons of $X$, Proposition \ref{prop:algch} implies that there is a single orbit of $\mathrm{Aut}(C_n \mathcal{G}) $-orbit of polygons of $\mathscr{X}$.

\begin{definition}[fundamental polygon]\label{def:fundpoly}
	We denote by $P$ the polygon of $\mathscr{X}$ whose vertices are 
	$$\langle G_1,G_2 \rangle, \ldots, \langle G_{n-1},G_n \rangle, \langle G_n,G_1 \rangle.$$
\end{definition}

\begin{lem}\label{fact_1}
	The $\mathrm{Aut}(C_n \mathcal{G})$-stabiliser of $P$ is $\mathrm{Loc}$.
\end{lem}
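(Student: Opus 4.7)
The plan is to prove the two inclusions separately.

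For the inclusion $\mathrm{Loc} \subseteq \mathrm{stab}_{\mathrm{Aut}(C_n\mathcal{G})}(P)$, I would take a local automorphism $\psi \in \mathrm{Loc}$ given by a pair $(\sigma, \{\varphi_i : G_i \to G_{\sigma(i)}\})$ and simply compute
$$\psi(\langle G_i, G_{i+1}\rangle) = \langle \psi(G_i), \psi(G_{i+1})\rangle = \langle G_{\sigma(i)}, G_{\sigma(i+1)}\rangle$$
for every $i \in \bbZ_n$. This is still a vertex of $P$ because $\sigma$ is a symmetry of $C_n$, so $\{\sigma(i), \sigma(i+1)\}$ is an edge of $C_n$. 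Hence $\psi$ permutes the vertex-set of $P$; since polygons of $\mathscr{X}$ are determined by their boundary cycle (by construction of $\mathscr{X}$), this forces $\psi(P) = P$.

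For the reverse inclusion, let $\varphi$ stabilise $P$ and set $v_i := \langle G_i, G_{i+1}\rangle$. Because $\varphi$ acts cellularly on $\mathscr{X}$ and preserves $P$, it induces a graph automorphism of the $n$-cycle $\partial P$; hence there is a dihedral element $\tau \in \mathrm{Aut}(C_n)$ such that $\varphi(v_i) = v_{\tau(i)}$ for every $i$. Uniqueness of reduced forms in graph products yields the identity $v_{i-1} \cap v_i = G_i$, and because group automorphisms preserve intersections of subgroups we get
$$\varphi(G_i) = \varphi(v_{i-1}) \cap \varphi(v_i) = v_{\tau(i-1)} \cap v_{\tau(i)}.$$
A short case analysis on whether $\tau$ is a rotation or a reflection shows that the right-hand side is always of the form $G_{\sigma(i)}$, where $\sigma$ is a symmetry of $C_n$ explicitly determined by $\tau$. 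In particular $\varphi|_{G_i}$ is an isomorphism $\varphi_i : G_i \to G_{\sigma(i)}$, and the local automorphism $\psi \in \mathrm{Loc}$ associated to $(\sigma, \{\varphi_i\})$ coincides with $\varphi$ on each factor $G_i$. Since the vertex-groups generate $C_n\mathcal{G}$, this forces $\varphi = \psi \in \mathrm{Loc}$.

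The argument is essentially bookkeeping, and I do not see a real obstacle. The only mildly subtle point is to translate the dihedral element $\tau$ acting on the $n$ vertices of $P$ into the corresponding symmetry $\sigma$ of $C_n$ acting on the indices of the vertex-groups; this is a purely combinatorial verification that poses no serious difficulty.
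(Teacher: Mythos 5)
Your proposal is correct and follows essentially the same route as the paper: both directions hinge on the identity $\langle G_{i-1},G_i\rangle \cap \langle G_i,G_{i+1}\rangle = G_i$, applying the automorphism to this intersection to recover the vertex groups and hence exhibit the stabilising automorphism as a local one. The only cosmetic differences are that you spell out the easy inclusion $\mathrm{Loc}\subseteq \mathrm{stab}(P)$ (which the paper dismisses as clear) and that you package the induced permutation of the vertices of $P$ as a dihedral element $\tau$, whereas the paper works directly with a bijection $\sigma$ of $\mathbb{Z}/n\mathbb{Z}$ satisfying $|\sigma(i)-\sigma(i+1)|=1$ and uses injectivity of $\varphi$ to rule out $\sigma(i-1)=\sigma(i+1)$.
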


\begin{proof} 
	Let $\psi$ be an automorphism stabilising $P$. Because $\psi$ permutes the vertices of $P$, there exists a bijection $\sigma : \mathbb{Z}/n \mathbb{Z} \to \mathbb{Z} /n \mathbb{Z}$ such that $\psi \left( \langle G_i,G_{i+1} \rangle \right) = \langle G_{\sigma(i)}, G_{\sigma(i+1)} \rangle$ where $|\sigma(i)-\sigma(i+1)|=1$ for every $i$ mod $n$. For every index $i$ mod $n$, one has
	$$\psi(G_i)= \psi \left( \langle G_{i-1},G_i \rangle \cap \langle G_i,G_{i+1} \rangle \right) = \langle G_{\sigma(i-1)}, G_{\sigma(i)} \rangle \cap \langle G_{\sigma(i)}, G_{\sigma(i+1)} \rangle = G_{\sigma(i)},$$
	the last equality being justified by the observation that, because $\psi$ is an automorphism, necessarily $\psi( \langle G_{i-1},G_i \rangle) \neq \psi( \langle G_i,G_{i+1} \rangle)$, which implies that $\sigma(i-1) \neq \sigma(i+1)$. Consequently, if we set $\Phi = \{ \psi_{|G_i} : G_i \to G_{\sigma(i)} \mid i \ \text{mod} \ n \}$, then
	$$\psi = (\sigma, \Phi) \in \mathrm{Loc}.$$
	Thus, we have proved that our stabiliser is included into $\mathrm{Loc}$. The converse is clear, which concludes the proof.
\end{proof}

\begin{lem}\label{fact_2}
	$\mathrm{Inn} \cap \mathrm{Loc}= \{ \mathrm{Id} \}$.
\end{lem}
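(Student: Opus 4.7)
The plan is as follows. Suppose $\psi \in \mathrm{Inn} \cap \mathrm{Loc}$ and write $\psi = \iota_g$ for some $g \in C_n\mathcal{G}$; the goal is to show that $g = 1$, so that $\psi = \mathrm{Id}$.

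First, by Lemma \ref{fact_1}, since $\iota_g \in \mathrm{Loc}$, the automorphism $\iota_g$ stabilises the fundamental polygon $P$ of $\mathscr{X}$ setwise. By the equivariant isomorphism $\Phi : X \to \mathscr{X}$ of Proposition \ref{prop:algch}, under which the action of $\iota_g$ on $\mathscr{X}$ corresponds to the action of $g$ by left multiplication on cosets in $X$, this translates to the fact that $g$ setwise stabilises the polygon $P_X := \Phi^{-1}(P) \subset X$, whose vertices are precisely the cosets $1 \cdot \langle G_i, G_{i+1} \rangle$ for $i \in \bbZ_n$.

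Second, note that the action of $C_n\mathcal{G}$ on vertices of $X$ preserves the ``coset type'': a coset of $\langle G_i, G_{i+1} \rangle$ is sent to another coset of the same subgroup $\langle G_i, G_{i+1} \rangle$. Since $P_X$ contains exactly one vertex of each type (one for each $i \in \bbZ_n$), the element $g$ must fix each vertex of $P_X$ individually. This yields the identity of cosets $g \langle G_i, G_{i+1} \rangle = \langle G_i, G_{i+1} \rangle$ for every $i$, and hence $g \in \langle G_i, G_{i+1} \rangle$ for every $i \in \bbZ_n$.

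Finally, $g$ thus lies in $\bigcap_{i \in \bbZ_n} \langle G_i, G_{i+1} \rangle$. Using the classical fact that parabolic subgroups of a graph product intersect according to the intersection of their defining vertex sets, and noticing that $\{G_1, G_2\} \cap \{G_3, G_4\} = \varnothing$ already for $n \geq 4$, we get $\langle G_1, G_2 \rangle \cap \langle G_3, G_4 \rangle = \{1\}$, and hence $g = 1$. The only subtle point in the argument is the identification of the two actions (on $X$ and on $\mathscr{X}$) and of the polygons $P$ and $P_X$, both of which are already encoded in Proposition \ref{prop:algch} and the preceding corollary; the remainder is routine coset bookkeeping combined with a standard normal-form fact for graph products.
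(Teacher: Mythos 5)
Your proof is correct and follows essentially the same route as the paper: both identify $\mathrm{Inn}\cap\mathrm{Loc}$ with the $\mathrm{Inn}$-stabiliser of the fundamental polygon via Lemma \ref{fact_1} and the equivariance in Proposition \ref{prop:algch}, and then use that the $C_n\mathcal{G}$-stabiliser of a polygon of $X$ is trivial. The only difference is that the paper cites this triviality directly from Observation \ref{obs:stab}, whereas you re-derive it by the (correct) type-preservation and parabolic-intersection argument.
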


\begin{proof}
	Because the $C_n \mathcal{G}$-stabiliser of a polygon of $X$ is trivial, we deduce from Proposition \ref{prop:algch} that the $\mathrm{Inn}$-stabiliser of a polygon of $\mathscr{X}$ must be trivial as well. The desired conclusion follows from Lemma \ref{fact_1}. 
\end{proof}

We are now ready to prove the main result of this section:

\begin{thm}\label{thm:auto}
For every $n \geq 5$,
$$\mathrm{Aut}(C_n\mathcal{G})= \mathrm{Inn} \cdot \mathrm{Loc} \simeq C_n\mathcal{G} \rtimes \left( \left( \prod\limits_{i\in\bbZ_n} \mathrm{Aut}(G_i) \right) \rtimes \mathrm{Sym} \right).$$
\end{thm}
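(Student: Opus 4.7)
The plan is to exploit the action of $\mathrm{Aut}(C_n\cG)$ on $\mathscr{X}$ constructed earlier in this section, together with the transitivity of $\mathrm{Inn}$ on the polygons of $\mathscr{X}$. Given any $\psi \in \mathrm{Aut}(C_n\cG)$, the strategy is to find an inner automorphism that corrects $\psi$ to stabilise the fundamental polygon $P$ of Definition \ref{def:fundpoly}; Lemma \ref{fact_1} will then identify the corrected automorphism as an element of $\mathrm{Loc}$. The desired semidirect product structure will follow from Lemma \ref{fact_2} together with the standard fact that $\mathrm{Inn}$ is normal.

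In detail, I would first pick $\psi \in \mathrm{Aut}(C_n\cG)$ and consider its image $\psi(P)$. Since $C_n\cG$ acts transitively on the polygons of $X$, and the isomorphism $\Phi : X \to \mathscr{X}$ of Proposition \ref{prop:algch} intertwines this action with the $\mathrm{Inn}$-action on $\mathscr{X}$ (via the intertwining relation stated at the beginning of this section), the subgroup $\mathrm{Inn}$ also acts transitively on polygons of $\mathscr{X}$. Hence there exists $g \in C_n\cG$ with $\iota_g \cdot P = \psi(P)$, so that $\iota_g^{-1} \psi$ stabilises $P$. By Lemma \ref{fact_1}, $\iota_g^{-1} \psi \in \mathrm{Loc}$, whence $\psi = \iota_g \cdot (\iota_g^{-1}\psi) \in \mathrm{Inn} \cdot \mathrm{Loc}$. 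This proves $\mathrm{Aut}(C_n\cG) = \mathrm{Inn} \cdot \mathrm{Loc}$.

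To extract the explicit form stated in the theorem, I combine this with $\mathrm{Inn} \cap \mathrm{Loc} = \{\mathrm{Id}\}$ (Lemma \ref{fact_2}) and the fact that $\mathrm{Inn}$ is normal in $\mathrm{Aut}(C_n\cG)$ to conclude that $\mathrm{Aut}(C_n\cG) = \mathrm{Inn} \rtimes \mathrm{Loc}$ as an internal semidirect product. Since $n \geq 5$, the cycle $C_n$ is not a join and has more than one vertex, so Corollary \ref{center} implies that $C_n\cG$ has trivial centre and the canonical map $C_n\cG \to \mathrm{Inn}$ is an isomorphism. Finally, the observation recorded immediately after the definition of local automorphism identifies $\mathrm{Loc}$ with $\mathrm{Loc}^0 \rtimes \mathrm{Sym}$, and $\mathrm{Loc}^0$ is by construction isomorphic to $\prod_{i\in\bbZ_n} \mathrm{Aut}(G_i)$ since a local automorphism with underlying symmetry $\sigma = \mathrm{Id}$ is precisely the data of one automorphism of each vertex-group. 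Putting these identifications together yields the claimed formula.

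No serious obstacle remains: all the geometric content has been concentrated in the previous sections (extension of the action to $\mathrm{Aut}(C_n\cG)$, Lemmas \ref{fact_1} and \ref{fact_2}), and this theorem is essentially the orbit-stabiliser reformulation of that action at the polygon $P$. The only mild subtlety is to use the transitivity of $\mathrm{Inn}$, rather than merely of $C_n\cG$, on polygons of $\mathscr{X}$, for which the intertwining relation $\Phi(g \cdot x) = \iota(g) \cdot \Phi(x)$ established at the start of this section is crucial.
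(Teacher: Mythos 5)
Your proposal is correct and follows essentially the same route as the paper's own proof: use the transitivity of $\mathrm{Inn}$ on polygons of $\mathscr{X}$ (inherited from $C_n\mathcal{G} \curvearrowright X$ via Proposition \ref{prop:algch}) to reduce any automorphism modulo $\mathrm{Inn}$ to a stabiliser of the fundamental polygon, then apply Lemmas \ref{fact_1} and \ref{fact_2}. Your additional remarks identifying $\mathrm{Inn}$ with $C_n\mathcal{G}$ via Corollary \ref{center} and unpacking $\mathrm{Loc} \simeq \mathrm{Loc}^0 \rtimes \mathrm{Sym}$ simply make explicit what the paper leaves implicit.
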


\begin{proof}
Let $\varphi \in \mathrm{Aut}(C_n \mathcal{G})$ be an automorphism. 
Since $X$ contains a unique $C_n \mathcal{G}$-orbit of polygons, it follows from Proposition \ref{prop:algch} that $\mathscr{X}$ contains a unique $\mathrm{Inn}$-orbit of polygons. Therefore, there exists some $\iota \in \mathrm{Inn}$ such that $\iota \varphi \cdot P = P$, where $P$ denotes the fundamental polygon. Equivalently, there exists some $\iota \in \mathrm{Inn}$ such that $\iota \varphi$ belongs to the $\mathrm{Aut}(C_n \mathcal{G})$-stabiliser of $P$. Lemma \ref{fact_1}  implies that $\varphi \in \mathrm{Inn} \cdot \mathrm{Loc}$, which proves the equality $\mathrm{Aut}(C_n \mathcal{G}) = \mathrm{Inn} \cdot \mathrm{Loc}$. Finally, the equality $\mathrm{Aut}(C_n \mathcal{G}) = \mathrm{Inn} \rtimes \mathrm{Loc}$ follows from Lemma \ref{fact_2}.
\end{proof}

\section{Acylindrical hyperbolicity}

The aim of this section is to prove the following:

\begin{thm}\label{thm:acyl_hyp}
Assume that each group $G_i$, $1\leq i \leq n$, has its automorphisms determined by a finite set. Then Aut$(C_n\cG)$ is acylindrically hyperbolic.
\end{thm}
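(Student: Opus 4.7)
The plan is to apply the criterion of Chatterji--Martin \cite{Chatterji_Martin_Criterion_Acyl_Hyp} for acylindrical hyperbolicity to the action of $\mathrm{Aut}(C_n\cG)$ on the CAT(0) cube complex $X'$ constructed in Section~\ref{section:aut}. In its cubical form, the criterion guarantees acylindrical hyperbolicity of a group acting non-elementarily on an irreducible CAT(0) cube complex as soon as one can exhibit a pair of strongly separated hyperplanes whose setwise stabiliser is finite.

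Non-elementarity is immediate: the subgroup $C_n\cG \subset \mathrm{Aut}(C_n\cG)$ already acts cocompactly on $X'$, is non-elementarily hyperbolic for $n \ge 5$ (as $C_n$ is not a join, cf.\ Corollary~\ref{center} and Lemma~\ref{lem:centralizer}), and contains many rank-one hyperbolic isometries of $X'$. Strongly separated pairs of hyperplanes can be produced from the crossing-graph estimates of Section~2: by Lemma~\ref{lem:wall_hyperplane} hyperplanes correspond to tree-walls of $X$, and any pair of tree-walls $T_1, T_2$ with $\Delta(T_1, T_2) \ge 3$ gives rise to strongly separated hyperplanes, since such a distance forbids a common transverse tree-wall. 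Such pairs are easily arranged along the axis of a suitable cyclically reduced hyperbolic element of $C_n\cG$.

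The main obstacle is to bound the full $\mathrm{Aut}(C_n\cG)$-stabiliser of the chosen pair $\{\hat h_1, \hat h_2\}$. Proposition~\ref{prop:interstabwall} already shows triviality of the $C_n\cG$-stabiliser, so via the decomposition $\mathrm{Aut}(C_n\cG) = \mathrm{Inn} \rtimes \mathrm{Loc}$ of Theorem~\ref{thm:auto} the question reduces to controlling the $\mathrm{Loc}$-part: up to composition with the finitely many inner automorphisms permuting the configuration, one must bound the elements of $\mathrm{Loc}$ stabilising it. This is where the hypothesis on finite determination of the local groups' automorphisms enters. The strategy is to enrich $\{T_1, T_2\}$ into a larger (still finite) configuration of tree-walls preserved by the stabiliser, chosen so that along this enriched configuration every element of each finite determining set $S_i \subset G_i$ appears in some edge-stabiliser. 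Any element of $\mathrm{Loc}$ stabilising this pointwise restricts to an automorphism of each $G_i$ fixing $S_i$, hence acts trivially on each factor; combined with the finiteness of $\mathrm{Sym}$, this yields the required finiteness.

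Applying the Chatterji--Martin criterion, $\mathrm{Aut}(C_n\cG)$ is then either virtually cyclic or acylindrically hyperbolic. Since it contains $C_n\cG$, which is certainly not virtually cyclic, the theorem follows.
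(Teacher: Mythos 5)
Your overall strategy---running the Chatterji--Martin criterion on the action of $\mathrm{Aut}(C_n\cG)$ on $X'$, with non-elementarity supplied by the inner automorphisms and the finiteness hypothesis on the local automorphisms entering only in the stabiliser condition---is exactly the paper's, and the peripheral verifications (non-elementarity, essentiality, irreducibility, no free face) are fine in spirit. The genuine gap is in the one step where the hypothesis that each $G_i$ has its automorphisms determined by a finite set is actually used. You propose to enrich the pair $\{\hat h_1,\hat h_2\}$ into a finite configuration of tree-walls ``preserved by the stabiliser'' along which the determining elements $s_{i,j}$ ``appear in some edge-stabiliser'', and to conclude that a local automorphism stabilising the configuration fixes each $S_i$ pointwise. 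This does not work as stated, for two reasons. First, there is no canonical enrichment: the stabiliser of $\{\hat h_1,\hat h_2\}$ has no reason to preserve whatever auxiliary tree-walls you add, unless they are determined by the original pair. Second, and more seriously, stabilising an edge (or a coset $gG_i$) only forces $\varphi$ to preserve the corresponding conjugate of $G_i$ \emph{as a subgroup}; it yields constraints of the form $\varphi(s)\in sG_i$, never $\varphi(s)=s$. Since every element of $G_i$ lies in the stabiliser of every edge labelled $i$, ``appearing in an edge-stabiliser'' carries no information about the specific elements $s_{i,j}$, so no amount of setwise stabilisation of subcomplexes can force $\varphi$ to be the identity on each factor. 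A related soft spot is the reduction ``the question reduces to controlling the $\mathrm{Loc}$-part'': an element $\iota\circ\lambda$ of $\mathrm{Inn}\rtimes\mathrm{Loc}$ can stabilise your configuration without either factor doing so, so this needs an actual computation rather than an appeal to the semidirect product structure.

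The paper's mechanism is different and is worth internalising: it takes the two points to be the fundamental polygon $P$ and a translate $gP$, proves (Lemma \ref{lem:stab}) that
$\mathrm{Stab}_{\mathrm{Aut}(C_n\cG)}(P)\cap\mathrm{Stab}_{\mathrm{Aut}(C_n\cG)}(gP)=\{\varphi\in\mathrm{Loc}\mid\varphi(g)=g\}$
(this is precisely the careful version of your ``reduce to the $\mathrm{Loc}$-part'' step, using $\mathrm{Inn}\cap\mathrm{Loc}=\{\mathrm{Id}\}$ and triviality of the centre), and then chooses $g$ to be a product of all the determining elements $s_{i,j}$ interleaved so that no two consecutive syllables lie in adjacent or equal vertex groups. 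By Corollary \ref{cor:uniquereduced} this word is the \emph{unique} reduced form of $g$, so $\varphi(g)=g$ forces $\varphi(s_{i,j})=s_{i,j}$ syllable by syllable, whence $\varphi=\mathrm{Id}$. It is this normal-form argument---algebraic, not a tree-wall configuration argument---that converts the finite-determination hypothesis into triviality of the stabiliser intersection; your sketch is missing a substitute for it.
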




To prove this result, we use a criterion introduced in \cite{Chatterji_Martin_Criterion_Acyl_Hyp} to show the acylindrical hyperbolicity of a group via its action on a CAT(0) cube complex. Following \cite{CapraceSageev}, we say that the action of a group $G$ on a CAT(0) cube complex $Y$ is \textit{essential} if no $G$-orbits stays in some neighbourhood of a half-space. Following \cite{ChatterjiFernosIozzi}, we say that the action is \textit{non-elementary} if $G$ does not have a fixed point in $Y \cup \partial_{\infty}Y$.  We further say that $Y$ is \textit{cocompact } if its automorphism group acts cocompactly on it; that it is \textit{irreducible} if it does not split as the direct product of two non-trivial CAT(0) cube complexes; and that it \textit{does not have a free face} if every non-maximal cube is contained in at least two maximal cubes. 

\begin{thm}[{\cite[Theorem 1.5]{Chatterji_Martin_Criterion_Acyl_Hyp}}]\label{thm:Chatterji_Martin}
Let $G$ be a group acting  non-elementarily and essentially on a  finite-dimensional irreducible cocompact CAT(0) cube complex with no free face. If there exist two points whose stabilisers intersect along a finite subgroup, then $G$ is acylindrically hyperbolic.
\end{thm}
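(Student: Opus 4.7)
The plan is to apply the Chatterji--Martin criterion (Theorem 5.2) to the action of $\mathrm{Aut}(C_n \mathcal{G})$ on the Davis complex $X'$ established in Section 4. Most hypotheses are verified quickly. The complex $X'$ is $2$-dimensional CAT(0) and cocompact under $C_n \mathcal{G}$, hence under $\mathrm{Aut}$. Every edge of $X'$ lies in two squares --- either as half of a boundary edge of $X$ (where it lies in the two squares coming from the two polygons of $X$ sharing that edge) or as a barycentre-to-midpoint edge (where it lies in two squares of the same polygon) --- which rules out free faces. Irreducibility follows from the observation that squares of $X'$ only arise from pairs of adjacent vertices of $C_n$; a product decomposition would require a bipartition of $V(C_n)$ such that every vertex of one class is adjacent to every vertex of the other, which is impossible for $n\geq 5$ as $C_n$ has degree $2$. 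Non-elementarity and essentiality of the $\mathrm{Aut}(C_n\mathcal{G})$-action descend from the corresponding standard properties of the cocompact subgroup $\mathrm{Inn}(C_n \mathcal{G}) \simeq C_n \mathcal{G}$ (Corollary 1.7): if an $\mathrm{Aut}$-orbit were bounded or trapped near a half-space, so would be the $\mathrm{Inn}$-suborbit.

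The crux of the argument is to produce two vertices of $X'$ whose $\mathrm{Aut}(C_n\mathcal{G})$-stabilisers intersect in a finite subgroup. The plan is to use the centres $c_1$ and $c_u$ of two polygons of $X$, for an element $u \in C_n \mathcal{G}$ to be specified. Combining Lemma 4.5 with the decomposition $\mathrm{Aut}(C_n \mathcal{G}) = \mathrm{Inn} \rtimes \mathrm{Loc}$ of Theorem 4.9, one checks that $\iota_h \circ \ell$ stabilises the centre $c_g$ of the polygon at $g$ if and only if $h = g\ell(g)^{-1}$, so that
$$\mathrm{stab}_{\mathrm{Aut}}(c_1) \cap \mathrm{stab}_{\mathrm{Aut}}(c_u) \simeq \{\ell \in \mathrm{Loc} : \ell(u) = u\}.$$
For each $i \in \bbZ_n$, let $S_i \subset G_i$ be a finite subset determining $\mathrm{Aut}(G_i)$. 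I would construct $u$ as a reduced word whose multiset of syllables is exactly $\bigsqcup_i S_i$, by concatenating blocks of the form $s_1^j s_2^j \cdots s_n^j$ with $s_i^j \in S_i$ (padding with repetitions so that all blocks have length $n$). For $n \geq 5$ each vertex-group $G_i$ commutes only with $G_{i\pm 1}$, so no two syllables lying in the same $G_i$ but coming from different blocks can be brought adjacent by commutation moves, and the word stays reduced. Invariance of the multiset of syllables of a reduced word under (O3) then forces any $\ell = (\sigma,\Phi) \in \mathrm{Loc}$ fixing $u$ to satisfy $\varphi_i(S_i) = S_{\sigma(i)}$ for every $i$. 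Since $\mathrm{Sym}$ is finite, and for each $\sigma$ there are only finitely many isomorphisms $G_i \to G_{\sigma(i)}$ sending $S_i$ bijectively onto $S_{\sigma(i)}$ (two such isomorphisms differ by an automorphism of $G_{\sigma(i)}$ permuting $S_{\sigma(i)}$, and the defining property of $S_{\sigma(i)}$ ensures such an automorphism is determined by its permutation of $S_{\sigma(i)}$), the intersection is finite, and Chatterji--Martin applies.

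The main difficulty I anticipate is the combinatorial construction of $u$: ensuring that the concatenated word is genuinely reduced and that every element of every $S_i$ really appears as a syllable in its reduced form. This is exactly where the cycle structure with $n \geq 5$ is decisive, since the sparsity of commutation in $C_n$ prevents syllables from the same $G_i$ from being shuffled adjacent and merged across blocks.
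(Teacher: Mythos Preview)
Your proposal is not a proof of the stated Theorem~\ref{thm:Chatterji_Martin}, which is a black-box citation and carries no proof in the paper; rather, you are sketching Theorem~\ref{thm:acyl_hyp} by applying this criterion. Read as such, your approach is correct and follows the same overall strategy as the paper: verify the hypotheses of Theorem~\ref{thm:Chatterji_Martin} for the action on $X'$ (your checks of irreducibility and absence of free faces amount to Lemmas~\ref{lem:irred} and~\ref{lem:no_free_face}, phrased slightly differently), then exhibit two polygons whose $\mathrm{Aut}$-stabilisers meet in a small subgroup via the computation of Lemma~\ref{lem:stab}.

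The one substantive difference lies in the construction of the element and the analysis of its $\mathrm{Loc}$-stabiliser. The paper builds $g$ from blocks $s_{i+2,j}s_{i,j}$, arranged so that no two consecutive syllables lie in the same \emph{or in adjacent} vertex-groups; Corollary~\ref{cor:uniquereduced} then yields a \emph{unique} reduced expression, so $\varphi(g)=g$ forces $\varphi$ to fix every individual syllable and hence to be the identity---the stabiliser intersection is trivial. Your blocks $s_1^j s_2^j\cdots s_n^j$ have adjacent-group syllables at the seams ($s_n^j$ next to $s_1^{j+1}$), so Corollary~\ref{cor:uniquereduced} does not apply and you cannot pin down syllables positionally; instead you exploit the invariance of the \emph{multiset} of syllables under (O3) to get $\varphi_i(S_i)=S_{\sigma(i)}$, and then the determining property of $S_{\sigma(i)}$ bounds the number of compatible $\varphi_i$. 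Both arguments are valid and suffice for the criterion, which only asks for a finite intersection; the paper's route trades a slightly fussier word for a sharper (trivial) conclusion, while yours is more direct but leaves a finite ambiguity.
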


We will use this criterion for the action of Aut$(C_n\cG)$ on the Davis complex $X'$. To this end, we need to check a few preliminary results about the action. 

%

\begin{lem}\label{lem:ess}
The action of Aut$(C_n\cG)$ on $X'$ is essential and non-elementary.
\end{lem}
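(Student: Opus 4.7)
The key reduction is that $\mathrm{Aut}(C_n\cG)$ contains $\mathrm{Inn}(C_n\cG)$, which is isomorphic to $C_n\cG$ by Corollary \ref{center} (since $C_n$ is not a join for $n\geq 5$, the centre of $C_n\cG$ is trivial). Both essentiality and non-elementarity are inherited by overgroups acting on the same space, so the plan is to establish each property for the restricted action $C_n\cG\curvearrowright X'$.

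For non-elementarity, the first task is to observe that $C_n\cG$ has no global fixed point in $X'$: by Observation \ref{obs:stab} every vertex stabiliser is conjugate to one of $\{1\}$, $G_i$ or $G_i \times G_{i+1}$, none of which is all of $C_n\cG$. The second task is to rule out a global fixed point in $\partial_\infty X'$, for which I would exhibit two loxodromic elements with pairwise distinct fixed endpoints. Picking two non-adjacent indices $j,k\in\bbZ_n$ (available since $n\geq5$) and non-trivial $g_j\in G_j$, $g_k\in G_k$, Corollary \ref{cor:uniquereduced} guarantees that $(g_jg_k)^m$ is already in reduced form of length $2m$, so $g_jg_k$ has infinite order and acts loxodromically on $X'$. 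A second loxodromic element is produced from a disjoint pair of indices, and a ping-pong argument on normal forms shows these two elements generate a rank-$2$ free subgroup whose generators have pairwise disjoint fixed-point sets on $\partial_\infty X'$, precluding any global fixed point there.

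For essentiality, cocompactness of the $C_n\cG$-action implies that there are only finitely many $C_n\cG$-orbits of hyperplanes in $X'$, parametrised by the labels $i\in \bbZ_n$. It then suffices, for each label $i$, to produce a loxodromic element of $C_n\cG$ whose axis crosses some hyperplane $\hat{h}_i$ of that label: iteration of such an element pushes any orbit point arbitrarily deep on both sides of $\hat{h}_i$, and $C_n\cG$-translation extends the conclusion to all hyperplanes of label $i$. To construct such an element, one picks non-adjacent indices $j,k\in \bbZ_n\setminus\{i-1,i,i+1\}$ and considers $g_jg_k$; by Lemma \ref{lem:stab_treewall} this element does not lie in the stabiliser of the tree-wall associated to $\hat{h}_i$, and a short disc-diagram argument in the spirit of Section~\ref{Davis} confirms that its axis does cross $\hat{h}_i$.

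The main obstacle I anticipate is the essentiality step for the small values $n=5$ and $n=6$, where the set $\bbZ_n\setminus\{i-1,i,i+1\}$ is too small to contain two non-adjacent indices. In this case I would replace $g_jg_k$ by a slightly longer cyclically reduced word involving carefully chosen indices, and use Lemma \ref{lem:stab_treewall} together with a Gau\ss--Bonnet computation on a small disc diagram joining the candidate axis to the tree-wall to verify both that the element avoids the tree-wall stabiliser and that its axis crosses $\hat{h}_i$.
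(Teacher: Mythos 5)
Your reduction to the inner action and your outline for non-elementarity are sound and close in spirit to the paper's: the paper also produces loxodromic elements that are products of two syllables in non-adjacent vertex groups (namely $s_{i-1}s_{i+1}$), but it obtains disjointness of their limit sets geometrically — each such axis lies in a tree-wall $T_i$, tree-walls are convex trees, and a vertex of $T_{i}$ projects onto $T_{i+1}$ exactly to their single intersection point (via the two-dimensionality of $X'$) — rather than by ping-pong on normal forms. Your ping-pong step is only asserted, and it needs genuine care because of the commutation relations in the graph product (for $n=5$, any two disjoint non-adjacent pairs of indices contain adjacent elements, so alternating products do partially commute); still, this half of the argument is in principle recoverable. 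Note also that ``reduced length of $(g_jg_k)^m$ grows'' gives infinite order but not yet loxodromicity; one must also rule out that $g_jg_k$ is elliptic with infinite-order image in a vertex stabiliser, e.g.\ by a conjugacy/cyclic-reduction argument.

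The essentiality step, however, contains a genuine error, and it is not the one you flagged. You choose $j,k\in\bbZ_n\setminus\{i-1,i,i+1\}$ non-adjacent and claim the axis of $g_jg_k$ crosses $\hat{h}_i$. It does not. If $j=l-1$ and $k=l+1$, then $g_jg_k$ preserves the tree-wall $T_l$ (Lemma \ref{lem:stab_treewall}) and its axis is contained in $T_l$; your exclusion of $i-1,i,i+1$ forces $l$ to differ from $i$ by at least $3$, so $T_l$ is disjoint from $T_i$ and does not cross $\hat{h}_i$ (the edges of $X'$ dual to $\hat{h}_i$ are radial edges and half-edges of labels $i\pm 1$, never of label $l$). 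Hence the axis stays in one halfspace and iteration does not push orbit points deep on both sides of $\hat{h}_i$; a similar failure occurs for pairs $j,k$ at distance $\geq 3$ in $C_n$. The inference ``$g_jg_k\notin\mathrm{stab}(T_i)$, therefore its axis crosses $\hat{h}_i$'' is unjustified and false, and no disc-diagram computation will confirm a false statement. The correct choice is essentially the opposite of yours: one must use an element whose axis lies in a tree-wall \emph{adjacent} to $T_i$ — the paper takes $s_is_{i+2}$, with axis in $T_{i+1}$, which meets $T_i$ in a single vertex and has single-point projection onto it, so the two rays of the axis go arbitrarily deep into the two halfspaces of $\hat{h}_i$. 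Your worry about $n=5,6$ is therefore a red herring: the construction fails for every $n$, for the reason above, not for lack of available indices.
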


\begin{proof}
It is enough to show that the action of $C_n\cG$ (identified with the subgroup of Aut$(C_n\cG)$ consisting of inner automorphisms) acts essentially and non-elementarily on $X'$. 

\textbf{Non-elementarity.} Since $X'$ is hyperbolic, non-elementarity of the action will follow from the fact that there exist two elements $g, h \in C_n\cG$ acting hyperbolically on $X'$ and having disjoint limit sets in the Gromov boundary of $X'$, by elementary considerations of the dynamics of the action on the boundary of a hyperbolic space. We now construct such hyperbolic elements.

Let $P'$ be the fundamental domain of $X'$ for the action of $C_n\cG$, ie. the subdivision of the polygon of $X$ corresponding to the polygon of $\mathscr{X}$ given by Definition \ref{def:fundpoly}. For each $1 \leq i \leq n$, let $e_i$ be the edge of $P'$ whose stabiliser is $G_i$, and let $T_i$ be the associated tree-wall. For each $1 \leq i \leq n$, choose a non-trivial element $s_i \in G_i$ and define the group element $g_i:= s_{i-1}s_{i+1} \in C_n\cG$, where the indices are considered modulo $n$.  Let $e_{i}' := s_{i+1}e_{i}$  and define 
$$\Lambda_{i} := \bigcup_{k\in \bbZ} g_i^k (e_i \cup e_i').$$
Then $\Lambda_i$ is a combinatorial axis for $g_i$, contained in the tree-wall $T_i$: Indeed, notice that $\Lambda_i$ makes an angle $\pi$ at each vertex.
We now claim that the limit sets of two consecutive such axes $\Lambda_i$, $\Lambda_{i+1}$ are disjoint. To show this, it is enough to show the analogous result for the limit sets of $T_i$ and $T_{i+1}$, and in particular it is enough to show that for every vertex $v$ of $T_i$, its unique projection on $T_{i+1}$ for the combinatorial metric is exactly their intersection point $u:= T_i \cap T_{i+1}$. Suppose by contradiction that this is not the case for some vertex $v \in T_i$. Then there exists an edge of $T_i$ between $u$ and $v$ that defines a hyperplane  that crosses $T_{i+1}$. Let $T$ be the tree-wall of $X$ associated to that hyperplane. We thus have three pairwise intersecting tree-walls, which implies that there exist three pariwise intersecting hyperplanes of $X'$. But this is impossible since $X'$ is $2$-dimensional. 

\textbf{Essentiality.}  Let $h$ be a halfspace of $X'$ associated to a hyperplane $\hat{h}$. Up to the action of $C_n\cG$, we can assume that  the tree-wall associated to $\hat{h}$ is  $T_i$ for some $1\leq i \leq n$.  We showed in the above paragraph that for each $1 \leq i \leq n$, there exist points in the combinatorial axis $\Lambda_{i+1}$ at distance as large as desired from $T_{i}$. This immediately translates into the existence of points in $\Lambda_{i+1}$  at distance as large as desired from $h$, which concludes.
\end{proof}

%
%

\begin{lem}\label{lem:irred}
The Davis complex $X'$ is irreducible.
\end{lem}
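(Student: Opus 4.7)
The plan is to argue by contradiction: assume $X' \cong Y_1 \times Y_2$ as a CAT(0) cube complex with both factors non-trivial. Since $X'$ is $2$-dimensional, both $Y_1$ and $Y_2$ must be $1$-dimensional CAT(0) cube complexes, i.e., trees. As $X$ is Gromov-hyperbolic (being a $C(n)$-$T(4)$ complex with $n \geq 5$) and $X'$ is its cubical subdivision, hence quasi-isometric to $X$, the product $Y_1 \times Y_2$ must be Gromov-hyperbolic. Since a product of two unbounded trees contains a Euclidean flat, at least one factor, say $Y_1$, is bounded, and therefore has only finitely many hyperplanes.

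The key combinatorial fact I would isolate is that two tree-walls $T$ and $T'$ of respective labels $i$ and $j$ can cross only if $|i - j| \equiv 1 \pmod n$; moreover, two distinct tree-walls of the same label never cross. Indeed, any shared vertex of $T$ and $T'$ is a corner $h(G_a \times G_{a+1})$ of $X$, which is an endpoint only of label-$a$ and label-$(a+1)$ edges, forcing $i, j \in \{a, a+1\}$; and when $i = j$, maximality of tree-walls forces $T = T'$ once they share a vertex. Consequently, tree-walls whose labels differ by at least $2 \pmod n$, or that are distinct but coincide in label, never cross.

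I would then exploit the product decomposition: hyperplanes of $X' = Y_1 \times Y_2$ partition into $Y_1$-type and $Y_2$-type, and any two hyperplanes of different types must cross; equivalently, two non-crossing hyperplanes are of the same type. Fix a $Y_1$-type hyperplane with underlying tree-wall $T$ of label $i$: the key fact implies that every tree-wall of label $\ell \notin \{i - 1, i + 1\}$ yields a $Y_1$-type hyperplane. Since $X'$ has infinite diameter (as $C_n \cG$ acts essentially on it by Lemma \ref{lem:ess}) while $Y_1$ is bounded, $Y_2$ must be unbounded, so we may also fix a $Y_2$-type hyperplane with tree-wall $T'$ of some label $i'$; symmetrically, every tree-wall of label $\ell \notin \{i' - 1, i' + 1\}$ yields a $Y_2$-type hyperplane. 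Any label $\ell \notin \{i - 1, i + 1, i' - 1, i' + 1\}$ would then produce a tree-wall that is simultaneously $Y_1$- and $Y_2$-type, a contradiction. Since this forbidden set has at most $4$ elements and $n \geq 5$, such an $\ell$ always exists, and a label-$\ell$ tree-wall always exists (e.g. the one containing the edge of $X$ between $\langle G_{\ell - 1}, G_\ell \rangle$ and $\langle G_\ell, G_{\ell + 1} \rangle$).

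The main obstacle is establishing the combinatorial observation about which tree-wall labels can cross, but this follows transparently from the polygonal structure of $X$ recalled in Section 1. A minor verification is needed to conclude that $Y_2$ must be unbounded and that every label of $\bbZ_n$ carries at least one tree-wall, both of which are immediate from the action of $C_n \cG$ on $X$.
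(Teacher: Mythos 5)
Your proof is correct, but it follows a genuinely different route from the paper's. The paper's argument is purely local and takes one line: the link in $X'$ of a vertex corresponding to a coset of the trivial subgroup (the centre of a polygon) is a cycle of length $n \geq 5$, which does not decompose as a non-trivial join, whereas every vertex link in a non-trivial product of connected cube complexes must be such a join. Your argument is instead global: you use the hyperbolicity of $X$ and, more importantly, the combinatorics of hyperplane crossings, namely that the two hyperplanes dual to a square of $X'$ at a corner $g(G_i \times G_{i+1})$ have associated tree-walls of labels $i$ and $i+1$, so that distinct crossing tree-walls always carry adjacent labels; combined with the fact that hyperplanes of $Y_1 \times Y_2$ split into two types with all cross-type pairs crossing, and $n \geq 5$, this forces a tree-wall of both types, a contradiction. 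Your argument is sound, but note two simplifications: the hyperbolicity/boundedness discussion is dispensable, since the mere non-triviality of $Y_2$ already supplies a $Y_2$-type hyperplane (any non-trivial connected factor contains an edge, hence a hyperplane); and the finiteness of the set of hyperplanes of the bounded factor is never used. What your approach buys is independence from the link condition -- it only uses the chromatic structure of crossings, which is the same structure exploited in Proposition \ref{prop:interstabwall} -- at the cost of invoking the classification of hyperplanes in a product; the paper's link argument is shorter and more elementary. Both are complete proofs.
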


\begin{proof}
The link of every vertex of $X'$ corresponding to a coset of the trivial subgroup has a link which is a cycle on $n$ vertices. As $n\geq 5$, such a link does not decompose non-trivially as a join, hence $X'$ does not decompose non-trivially as a direct product.
\end{proof}

\begin{lem}\label{lem:no_free_face}
The Davis complex $X'$ has no free face. 
\end{lem}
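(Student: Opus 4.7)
The plan is to observe that $X'$ is $2$-dimensional and then to show that every edge of $X'$ lies in at least two squares. Since $n\geq 5$, the cycle $C_n$ contains no triangle, so its maximal complete subgraphs are edges. By definition of the Davis complex, the maximal cubes of $X'$ are therefore squares, and the statement to prove reduces to showing that every edge of $X'$ is contained in at least two squares.

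To that end, I would classify the edges of $X'$ according to the chain of subgraphs that gives rise to them. Since the complete subgraphs of $C_n$ are $\emptyset$, singletons $\{v_i\}$, and edges $\{v_i, v_{i+1}\}$, the edges of $X'$ fall in two types: those of the form $(\{g\}, gG_i)$ and those of the form $(gG_i, g(G_i \times G_{i+1}))$. Similarly, every square of $X'$ comes from a chain $\emptyset \subset \{v_i\} \subset \{v_i, v_{i+1}\}$ with some base element $g$, and has as vertices $\{g\}$, $gG_i$, $g(G_i \times G_{i+1})$, $gG_{i+1}$.

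For an edge of the first type $(\{g\}, gG_i)$, any square containing it must complete into $\{g\}, gG_i, g(G_i \times G_j), gG_j$ with $v_j$ a neighbour of $v_i$ in $C_n$; there are exactly two such $j$, namely $i-1$ and $i+1$, giving exactly two squares. For an edge of the second type $(gG_i, g(G_i \times G_{i+1}))$, a containing square must have vertices $\{g'\}, g'G_i, g'(G_i \times G_{i+1}), g'G_{i+1}$ with $g'G_i = gG_i$ and $g'(G_i\times G_{i+1}) = g(G_i\times G_{i+1})$; this forces $g' \in gG_i$ and parameterises the containing squares by $G_i$. Because of the standing convention that each $G_i$ is non-trivial, $|G_i|\geq 2$, and thus in both cases the edge is contained in at least two squares.

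There is no serious technical obstacle in this argument; everything reduces to a direct enumeration based on the coset description of the vertices and squares of $X'$. The only delicate point worth noting is that the non-triviality of the vertex-groups is essential: if some $G_i$ were trivial, edges of the second type with stabiliser $G_i$ would lie in a single square and become free faces.
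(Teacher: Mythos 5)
Your proof is correct and follows essentially the same route as the paper: both arguments reduce the absence of free faces to showing that every edge of the square complex $X'$ lies in at least two squares, and both derive this from the coset description of $X'$ together with the non-triviality of the vertex-groups (the paper phrases the count for edges in $X^{(1)}$ as the index $[\mathrm{Stab}(e):\mathrm{Stab}(C)]=|G_i|\geq 2$, which is the same enumeration you carry out by hand). Your explicit treatment of the edges emanating from polygon centres is a point the paper leaves implicit, but it does not constitute a different approach.
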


\begin{proof}
It is enough to prove that every edge of $X'$ contained in $X^{(1)}$ is contained in at least two squares. Let $e$ be an edge of $X$ and let $C$ be a square containing $e$. There are exactly $[\mathrm{Stab}_{C_n\cG}(e) : \mathrm{Stab}_{C_n\cG}(C)]$  $C_n\cG$-translates of $C$ containing $e$. As $\mathrm{Stab}_{C_n\cG}(C)$ is trivial and $\mathrm{Stab}_{C_n\cG}(e)$, which is conjugate to some $G_i$, contains at least two elements, it follows that there are at least two squares containing $e$. 
\end{proof}

\begin{lem}\label{lem:stab}
Let $P$ be the fundamental domain of $X$ and let $g \in C_n\cG$. Then 
$$ \mbox{Stab}_{\mathrm{Aut}(C_n\cG)}(P)\cap  \mbox{Stab}_{\mathrm{Aut}(C_n\cG)}(gP) = \{ \varphi \in \mbox{Loc}(C_n\cG) \mid \varphi(g)=g\}.$$	
\end{lem}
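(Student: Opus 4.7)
The plan is to reduce the equality to a direct computation of how a local automorphism acts on a translated polygon, using the two facts already established: the stabiliser of $P$ in $\mathrm{Aut}(C_n\cG)$ is exactly $\mathrm{Loc}$ (Lemma \ref{fact_1}), and the $C_n\cG$-stabiliser of any polygon of $X$ is trivial (Observation \ref{obs:stab}).

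The containment $\supseteq$ is immediate: any $\varphi \in \mathrm{Loc}$ stabilises $P$ by Lemma \ref{fact_1}, and if moreover $\varphi(g)=g$ then the action formula $\varphi\cdot kH = \varphi(k)\varphi(H)$ sends each vertex $g\langle G_i, G_{i+1}\rangle$ of $gP$ to $g\langle G_{\sigma(i)}, G_{\sigma(i+1)}\rangle$, which is again a vertex of $gP$ (where $\sigma$ is the symmetry of $C_n$ underlying $\varphi$). So $\varphi$ also stabilises $gP$.

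For the reverse inclusion, I start with $\varphi$ in the intersection. By Lemma \ref{fact_1}, $\varphi \in \mathrm{Loc}$. The key step is then to compute $\varphi \cdot gP$ and observe that, vertex by vertex,
\[
\varphi \cdot \bigl(g\langle G_i, G_{i+1}\rangle\bigr) = \varphi(g)\,\varphi(\langle G_i, G_{i+1}\rangle) = \varphi(g)\,\langle G_{\sigma(i)}, G_{\sigma(i+1)}\rangle,
\]
since $\varphi \in \mathrm{Loc}$ permutes the vertex-groups according to $\sigma$. Collecting all vertices, this says $\varphi \cdot gP = \varphi(g)\cdot (\varphi\cdot P) = \varphi(g) P$, using $\varphi \cdot P = P$.

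Having $\varphi \in \mathrm{Stab}_{\mathrm{Aut}(C_n\cG)}(gP)$ therefore translates into $\varphi(g)P = gP$, i.e.\ the element $g^{-1}\varphi(g) \in C_n\cG$ stabilises the polygon $P$ under the inner action. But by Observation \ref{obs:stab} the $C_n\cG$-stabiliser of any polygon of $X$ is trivial, so $g^{-1}\varphi(g) = 1$, that is $\varphi(g) = g$. No genuine obstacle arises here: the only thing to be careful about is that the action is defined on cosets $kH$ by $\varphi\cdot kH = \varphi(k)\varphi(H)$ (not by a single multiplication), so that the factorisation $\varphi\cdot gP = \varphi(g)\,\varphi(P)$ requires $\varphi$ to stabilise $P$ setwise — which is exactly what belonging to $\mathrm{Loc}$ provides.
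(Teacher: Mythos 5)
Your proof is correct, but it takes a genuinely different route from the paper's. You compute the image of the translated polygon directly: for $\varphi\in\mathrm{Loc}$ the action formula gives $\varphi\cdot gP=\varphi(g)\cdot(\varphi\cdot P)=\varphi(g)P$, and then the triviality of the $C_n\cG$-stabiliser of a polygon (Observation \ref{obs:stab}) forces $\varphi(g)P=gP\Rightarrow\varphi(g)=g$. The paper instead works entirely at the level of automorphisms: it writes $\mathrm{Stab}_{\mathrm{Aut}(C_n\cG)}(gP)=\iota(g)\,\mathrm{Loc}\,\iota(g)^{-1}$, so $\varphi=\iota(g)\circ\psi\circ\iota(g)^{-1}$ for some $\psi\in\mathrm{Loc}$, rearranges this to show $\varphi\circ\psi^{-1}=\iota(g)\circ\iota(\psi(g))^{-1}\in\mathrm{Inn}\cap\mathrm{Loc}$, invokes Lemma \ref{fact_2} to get $\varphi=\psi$ and $\iota(g)=\iota(\psi(g))$, and finally uses centrelessness of $C_n\cG$ (Corollary \ref{center}) to conclude $\varphi(g)=g$. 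The two arguments rest on essentially the same geometric input --- the freeness of the $C_n\cG$-action on polygons is also what drives Lemma \ref{fact_2} --- but yours is more economical: it bypasses both Lemma \ref{fact_2} and the appeal to the trivial centre, at the cost of having to justify the identity $\varphi\cdot(g\cdot x)=\varphi(g)\cdot(\varphi\cdot x)$ for the extended action, which you correctly flag and which follows at once from the coset formula $\varphi\cdot kH=\varphi(k)\varphi(H)$. Both the containment $\supseteq$ and the key identification $\varphi(g)P=gP\Leftrightarrow\varphi(g)=g$ are handled correctly, so there is no gap.
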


\begin{proof}
Recall that $\mbox{Stab}_{\mathrm{Aut}(C_n\cG)}(P) = \mathrm{Loc}(C_n \cG)$. Therefore, if $\varphi \in \mathrm{Aut}(C_n \cG)$ belongs to $\mbox{Stab}_{\mathrm{Aut}(C_n\cG)}(P)\cap  \mbox{Stab}_{\mathrm{Aut}(C_n\cG)}(gP)$ then $\varphi \in  \mathrm{Loc}(C_n \cG)$ and there exists some $\psi \in \mathrm{Loc}(C_n \cG)$ such that $\varphi = \iota(g) \circ \psi \circ \iota(g)^{-1}$, where $\iota(g)$ denotes the inner automorphism defined by $g$. Since $\psi \circ \iota(g)^{-1} = \iota(\psi(g))^{-1} \circ \psi$, we deduce that
$$\varphi \circ \psi^{-1}= \iota(g) \circ \psi \circ \iota(g)^{-1} \circ \psi^{-1} = \iota(g) \circ \iota(\psi(g))^{-1} \circ \psi \circ \psi^{-1},$$
hence
$$\varphi \circ \psi^{-1} = \iota(g) \circ \iota(\psi(g))^{-1} \in \mathrm{Inn}(C_n \mathcal{G}) \cap \mathrm{Loc}(C_n \cG).$$
On the other hand, we know from Lemma \ref{fact_2} that $\mathrm{Inn}(C_n \mathcal{G}) \cap \mathrm{Loc}(C_n \cG)= \{ \mathrm{Id} \}$, whence $\varphi= \psi$ and $\iota(g)= \iota(\psi(g))$. As $C_n\cG$ is centerless by Corollary \ref{center}, this implies $\varphi(g)=g$, hence  the inclusion 
$$\mbox{Stab}_{\mathrm{Aut}(C_n\cG)}(P)\cap  \mbox{Stab}_{\mathrm{Aut}(C_n\cG)}(gP) \subset \{ \varphi \in \mbox{Loc}(C_n\cG) \mid \varphi(g)=g\}.$$
The reverse inclusion is clear.
\end{proof}

\begin{proof}[Proof of Theorem \ref{thm:acyl_hyp}.]
For each $1 \leq i\leq n$, choose a finite family $\{s_{i,j} \mid 1 \leq j \leq m_i \}$ determining the automorphisms of $G_i$. Up to allowing repetitions, we will assume that all the integers $m_i$ are equal, and denote by $m$ that integer. We now define a specific element $g \in C_n \mathcal{G}$ in the following way:
$$g_{i,j}:= s_{i+2,j}s_{i,j} \mbox{ for $1 \leq i \leq n$, $1 \leq j \leq m$,}$$
$$g_{j}:= g_{1,j} \cdots g_{n,j} \mbox{ for $1 \leq j \leq m$,}$$
$$g:=g_1 \cdots g_m.$$
Let $\varphi$ be an element of $ \mbox{Stab}_{\mathrm{Aut}(C_n\cG)}(P)\cap  \mbox{Stab}_{\mathrm{Aut}(C_n\cG)}(gP)$. By Lemma \ref{lem:stab}, it follows that $\varphi \in \mathrm{Loc}(C_n\cG)$ and $\varphi(g) = g$. By construction, $g$ can be written as a concatenation of the form $g= s_1 \cdots s_p$, where each $s_k$ is of the form $s_{i,j}$, and such that no consecutive $s_k, s_{k+1}$ belong to groups of $\cG$ that are joined by an edge of $\Gamma$. In particular, the decomposition $g= s_1 \cdots s_p$ is the unique reduced form of $g$ by Corollary \ref{cor:uniquereduced}. As $g = \varphi(g) = \varphi(s_1)\cdots\varphi(s_p)$ is an another reduced form of $g$, it follows that $\varphi(s_k)=s_k$ for every $k$. As we have 
$$\{s_k, 1 \leq k \leq p\} = \{s_{i,j}, 1\leq i \leq n, 1\leq j \leq m\}$$
by construction of $g$, it follows from the construction of the elements $s_{i,j}$ that $\varphi$ induces the identity automorphism on each $G_i$, hence $\varphi$ is the identity. We thus have that $ \mbox{Stab}_{\mathrm{Aut}(C_n\cG)}(P)\cap  \mbox{Stab}_{\mathrm{Aut}(C_n\cG)}(gP)$ is trivial. It now follows from Lemmas  \ref{lem:ess}, \ref{lem:irred}, \ref{lem:no_free_face}, and \ref{lem:stab} that Theorem \ref{thm:Chatterji_Martin} applies, hence Aut$(C_n\cG)$ is acylindrically hyperbolic.
%
\end{proof}

\noindent
Let us conclude this section by mentioning an example of cyclic product whose automorphism group is not acylindrically hyperbolic.

\begin{rmk}\label{remark:nonacyl}
Let $Z$ be the direct sum $\bigoplus\limits_{p \ \text{prime}} \mathbb{Z}_p$ and let $G_n$ be the graph product of $n$ copies of $Z$ over the cycle $C_n$. Let $g \varphi \in \mathrm{Aut}^0(G_n)$ where $g \in \mathrm{Inn}$ and $\varphi \in \mathrm{Loc}$. For each copy $Z_i$ of $Z$, the reduced word representing $g$ contains only finitely-many syllables in $Z_i$; let $S_i \subset Z_i$ denote this set of syllables. Clearly, there exists an infinite collection of automorphisms of $Z_i$ fixing $S_i$ pointwise; furthermore, we may suppose that this collection generates a subgroup of automorphisms $\Phi_i \leq \mathrm{Aut}(Z_i)$ which is a free abelian group of infinite rank. Notice that $\phi(g)=g$ for every $\phi \in \Phi_i$. Therefore, for every $\psi \in \Phi_1 \times \cdots \times \Phi_n \leq \mathrm{Loc}$, we have
	\begin{center}
		$\psi \cdot g \varphi= \psi(g) \cdot \psi \varphi=g \cdot \psi \varphi= g \cdot \varphi \psi = g \varphi \cdot \psi$,
	\end{center}
	since $\varphi$ and $\psi$ clearly commute: each $\mathrm{Aut}(Z_i)$ is abelian so that $\mathrm{Loc}$ is abelian as well. Thus, we have proved that the centraliser of any element of $\mathrm{Aut}^0(G_n)$ contains a free abelian group of infinite rank. Therefore, $\mathrm{Aut}^0(G_n)$ (and a fortiori $\mathrm{Aut}(G_n)$) cannot be acylindrically hyperbolic.
\end{rmk}


\addcontentsline{toc}{section}{References}

\bibliographystyle{alpha}
\bibliography{Genevois_Martin_Graph_Products_Cycle}

\Address

\end{document}